\theoremstyle{plain}
 \newtheorem{theorem}{Theorem}[section]
 \newtheorem{proposition}{Proposition}[section]
 \newtheorem{lemma}{Lemma}[section]
\theoremstyle{definition}
 \newtheorem{definition}{Definition}[section]
\theoremstyle{remark}
 \newtheorem{remark}{Remark}[section]
 \numberwithin{equation}{section}
\def\1{{\mathchoice {\rm 1\mskip-4mu l} {\rm 1\mskip-4mu l}
 		{\rm 1\mskip-4.5mu l} {\rm 1\mskip-5mu l}}}
\renewcommand{\1}{\boldsymbol 1}
\newsavebox{\@brx}
\newcommand{\llangle}[1][]{\savebox{\@brx}{\(\m@th{#1\langle}\)}%
	\mathopen{\copy\@brx\kern-0.5\wd\@brx\usebox{\@brx}}}
\newcommand{\rrangle}[1][]{\savebox{\@brx}{\(\m@th{#1\rangle}\)}%
	\mathclose{\copy\@brx\kern-0.5\wd\@brx\usebox{\@brx}}}
\newcommand{\closure}[2][3]{%
	{}\mkern#1mu\overline{\mkern-#1mu#2}}
\newcommand{\BEP}{\text{\normalfont BEP}}
\newcommand{\SIP}{\text{\normalfont SIP}}
\newcommand{\KMP}{\text{\normalfont KMP}}
\newcommand{\BMP}{\text{\normalfont BMP}}
\newcommand{\SEP}{\text{\normalfont SEP}}
\newcommand{\Z}{\mathbb Z}
\newcommand{\R}{\mathbb R}
\newcommand{\N}{\mathbb N}
\newcommand{\E}{\mathbb E}
\renewcommand{\Pr}{\mathbb P}
\newcommand{\scale}{\vartheta}
\newcommand{\tune}{\beta}
\newcommand{\comp}{\text{\normalfont c}}
\newcommand{\bulk}{\text{\normalfont bulk}}
\newcommand{\NegBin}{\text{\normalfont NegBin}}
\newcommand{\dd}{\text{\normalfont d}}
\newcommand{\VN}{\Lambda_N}
\newcommand{\VNh}{\widehat{\Lambda}_N}
\newcommand*\pFq[6][8]{%
	\begingroup 
	\pFqmuskip=#1mu\relax
	\mathcode`\,=\string"8000
	\begingroup\lccode`\~=`\,
	\lowercase{\endgroup\let~}\pFqcomma
	{}_{#2}F_{#3}{\left[\genfrac..{0pt}{}{#4}{#5};#6\right]}%
	\endgroup
}
\newcommand{\pFqcomma}{\mskip\pFqmuskip}
\title[Symmetric inclusion process with slow boundary]{Symmetric inclusion process with slow boundary:\\
	 hydrodynamics and hydrostatics}
\subjclass[2010]{60K35}
\keywords{Symmetric inclusion process, hydrodynamic limit, hydrostatic limit, duality for Markov processes, heat equation with boundary conditions, first and second class particles}
\author[Franceschini]{\bfseries Chiara Franceschini} 
\address{Center for Mathematical Analysis, Geometry and Dynamical Systems, IST, Universidade de Lisboa, 1049-001 Lisboa, Portugal
}
\email{chiara.franceschini@tecnico.ulisboa.pt}
\author[Gon\c{c}alves]{Patr\'{i}cia Gon\c{c}alves}
\address{Center for Mathematical Analysis, Geometry and Dynamical Systems, IST, Universidade de Lisboa, 1049-001 Lisboa, Portugal}
\email{pgoncalves@tecnico.ulisboa.pt}
\author[Sau]{Federico Sau}
\address{IST Austria, Am Campus 1, 3400, Klosterneuburg, Austria}
\email{federico.sau@ist.ac.at}
\thanks{\textit{Acknowledgments.} F.S.\ wishes to  thank Joe P.\ Chen for some fruitful discussions at an early stage of this work. C.F. and P.G. thank  FCT/Portugal for support through the project 
	UID/MAT/04459/2013.  This project has received funding from the European Research Council (ERC) under  the European Union's Horizon 2020 research and innovative programme (grant agreement   No.\ 715734). F.S. thanks   CAMGSD, IST, Lisbon, where part of this work has been done, and the European research and innovative programme No.\ 715734 for the kind hospitality. F.S.\ was founded by the European Union's Horizon 2020 research and innovation programme under the Marie-Sk\l{}odowska-Curie grant agreement  No.\ 754411.} 
\begin{document}

\begin{abstract}
We study the hydrodynamic and hydrostatic limits of the one-dimensional open symmetric inclusion process  with slow boundary. Depending on the value of the parameter tuning the interaction rate of the bulk of the system with the boundary, we obtain a linear heat equation with either Dirichlet, Robin or Neumann boundary conditions as hydrodynamic equation.  
In our approach, we combine duality and first-second class particle   techniques to reduce the scaling limit of the inclusion process to the limiting behavior of a single, non-interacting, particle. 
\end{abstract}

\maketitle

\section{Introduction}
Among the interacting particle systems employed to study   non-equilibrium phenomena in mathematical statistical physics, the \emph{inclusion process}  is gaining increasing attention (see, e.g., \cite{carinci_duality_2013-1, vafayi_weakly_2014, floreani_boundary2020, bianchi_metastability_2017, jatuviriyapornchai_structure_2019}). In particular, the \emph{symmetric inclusion process} ($\SIP$), introduced in \cite{giardina_duality_2007} as discrete dual of a Gaussian energy process -- known as Brownian momentum process ($\BMP$) --  and further studied in, e.g., \cite{giardina_duality_2009, carinci_duality_2013-1}, can be considered as the \textquotedblleft attractive\textquotedblright\  counterpart of the symmetric exclusion process ($\SEP$). Indeed,  inclusion particles evolve as independent random walks  subject to an attractive -- rather than repulsive --  interaction with  nearest neighbors and, consequently, with no restriction on the maximal number of particles per site.  Moreover, the $\SIP$ is related via some limiting \textquotedblleft thermalization\textquotedblright\ procedures  to the so-called $\KMP$ model \cite{kipnis_heat_1982}, introduced as a   microscopic model of heat	 transport in non-equilibrium.

The research around these stochastic systems mainly focuses on the microscopic structure of the non-equilibrium steady states as well as on their scaling limits, such as the derivation of Fick's law  for the  non-equilibrium steady state  and the proof of local convergence to a Gibbs state, see, e.g., \cite{kipnis_heat_1982, derrida_exact_1993-1}. In this realm, as for the study of relaxation to the stationary non-equilibrium states,  the first rigorous result on the derivation of the macroscopic  equation governing the evolution of the  density profile  dates back to \cite{eyink_hydrodynamics_1990, eyink_lattice_1991}. There the authors study the hydrodynamic and hydrostatic limits for gradient stochastic lattice gas models in a one-dimensional lattice.     Further developments regarding scaling limits of such systems concern the study of -- both dynamic and static --  large deviations and non-equilibrium fluctuations around the hydrodynamic limit for the open $\SEP$, see, e.g.,  \cite{landim_stationary_2006, farfan2011hydrostatics}. All these models yield     parabolic equations with suitable Dirichlet boundary conditions as hydrodynamic equations.

More recently, stochastic models with more general interactions between the bulk of the system  and the reservoirs have been introduced, see, e.g., \cite{baldasso_exclusion_2017, goncalves_hydrodynamics_2019, franco2015equilibrium, goncalves_non-equilibrium_2019, frometa2020boundary, erignoux_hydrodynamic_2018, erignoux2019hydrodynamicsI, erignoux2019hydrodynamicsII, bernardin2020microscopic}. Depending on the  interaction chosen, more general boundary conditions -- e.g.,  Robin, Neumann  or nonlinear boundary conditions --  and more general nonlinear and fractional diffusions have been derived.  For all these models,  the so-called entropy and relative entropy methods (see, e.g., \cite{kipnis_scaling_1999}) play a prominent role, but they  both require   replacement lemmas in order to  close the equations at the microscopic level. Furthermore, in the context of  exclusion processes in which a matrix formulation for the stationary non-equilibrium state is available and for zero-range processes in which the stationary non-equilibrium measures are of product form, explicit formulas for the stationary  correlations simplify the study of hydrostatic and stationary non-equilibrium fluctuations.

For the open $\SIP$, explicit expressions for the stationary correlations are not, in general, known. Furthermore,  the entropy methods do not directly apply to this context because the partition function of  the local Gibbs measures for $\SIP$ -- products of Negative Binomial distributions --  does not satisfy hypothesis [FEM] in \cite{kipnis_scaling_1999} regarding their radius of convergence. Such an assumption,  which is usually met for a wide class of zero-range and exclusion processes, is crucial in both one- and two-block estimates when applying entropy inequalities.

In view of the inapplicability of these  general and robust methods, we base our study on the duality property of the open $\SIP$. Duality in the context of  interacting particle systems has been given a probabilistic \textquotedblleft graphical\textquotedblright\ interpretation (see, e.g., \cite{liggett_interacting_2005-1}) and has been   thoroughly explored from a Lie algebraic and generating function point of view (see, e.g., \cite{giardina_duality_2009, redig_factorized_2018, carinci2019orthogonal}), enriching both the class of models with the duality property and  the space of duality functions for such models. In words, duality for a particle system may be viewed as the property  of having suitably weighted factorial moments evolving according to a closed system of linear  difference equations. Provided that $n \in \N$ is the degree of such moments, it turns out that the corresponding difference operators which govern  their evolution is given by the infinitesimal generator of $n$ interacting particles which follow the same interaction rules as those of the original system.
In the context of open systems in which particles enter and exit the bulk, the duality property still holds with the dual system having purely absorbing boundary. We remark that this picture is in line with its continuum counterpart, namely with the fact that stochastic representations of solutions of parabolic PDEs with Dirichlet boundary conditions are expressed in terms of diffusion processes which run backward in time and stop when hitting the boundary.

In this paper, we consider the open $\SIP$ on a one-dimensional lattice with  nearest-neighbor interactions, whose boundary rates scale with the size of the system. As done in, e.g., \cite{baldasso_exclusion_2017}, we introduce a parameter $\tune \geq 0$ which tunes the speed of these interactions: the higher the value of $\tune$, the slower the interaction. For this particle system, we derive the hydrodynamic and hydrostatic limits for all values of $\tune \geq 0$, obtaining, in particular, linear heat equations with either Dirichlet, Robin or Neumann boundary conditions depending on whether $\tune \in [0,1)$, $\tune =1$ or $\tune \in (1,\infty)$.

Our strategy to derive the hydrodynamic limit may be summarized as follows. First we center our empirical density fields around their stationary part, which, in our case, is explicitly known.  This centering procedure does not appear in previous literature on scaling limits of  open systems and, by applying it also to the limiting fields, symmetry properties of continuum and discrete Laplacians become available and boundary terms in the limiting equation cancel out.  Then, we exploit the linearity of the evolution equations for the  first and second moments of the occupation variables to close the equations for the associated centered empirical density fields. To this purpose, a \textquotedblleft corrected empirical density field\textquotedblright\ argument (see, e.g., \cite{jara_quenched_2008}) and the centering with respect to the stationary part are crucial in order to close the equations for the fields  and avoid technical replacement lemmas -- based, ultimately, on relative entropy estimates --  as done in the context of the open $\SEP$ in, e.g., \cite{baldasso_exclusion_2017, goncalves_hydrodynamics_2019}. 

For what concerns the hydrostatic limit, in order to verify that the stationary non-equilibrium measures satisfy the assumptions of the hydrodynamic limit, in general, one needs  to control the stationary two-point correlations. This has been done for the slow-boundary $\SEP$ in \cite{baldasso_exclusion_2017} (see also \cite{landim_stationary_2006}) by using the explicit form of such correlations, and in \cite{tsunoda2019hydrostatic} (see also \cite{landim_tsunoda2018}) by proving  replacement lemmas near the boundary. As already mentioned, for $\SIP$,  matrix formulations of the non-equilibrium steady state, explicit formulas for the correlations and replacement lemmas are not available. In order to overcome this, we develop a self-contained method -- which is one of the main contributions of this work --  to derive hydrostatic limits  based solely on duality and a hierarchical representation of the dual particle system. More specifically, we express correlations in terms of a dual system of \emph{two inclusion particles} as  in \cite{floreani_boundary2020}, and combine this with the introduction of a hierarchical \textquotedblleft first-second class particle\textquotedblright\ construction for $\SIP$. This allows us to  reduce the problem of checking the $L^1$-decay of these two-point stationary correlations to the study of a \emph{single one-dimensional  random walk} with absorbing boundary,  considerably simplifying the analysis.

Duality techniques are not new in the context of scaling limits for non-equilibrium systems and have been used thoroughly -- even without explicit mention (see, e.g., \cite{landim_stationary_2006, baldasso_exclusion_2017, erignoux_hydrodynamic_2018}). In this paper, we show that first and second order dualities combined with a purely probabilistic \textquotedblleft lookdown\textquotedblright\ construction of the dual system provide a simple strategy to obtain both hydrodynamic and hydrostatic limit, avoiding both non-homogeneous evolution equations for the two point correlations  and replacement lemmas as, e.g., in \cite{baldasso_exclusion_2017}.

We emphasize that the duality property has to be considered an \textquotedblleft exact\textquotedblright\ feature. Indeed, although some notions of approximate duality proved to be useful in some perturbative contexts (see, e.g., \cite[Chapter 6]{de_masi_mathematical_1991}), in general, duality does not transfer  to perturbations of the particle systems, which, for instance, introduce asymmetries. Nevertheless,  the duality property  is robust with respect to the generalizations of the underlying geometries, as, e.g., with respect to the introduction of disorder or the dimension of the lattice, as well as, of the reservoir interaction  (see, e.g., \cite{floreani_boundary2020}). Moreover, as for the stationary two-point correlations, the aforementioned reduction from two to one dual particles is general and holds for all geometries and reservoir interactions, even when the  stationary particle density profile is not explicitly known.  In this sense, we believe our techniques to  apply to a larger family of discrete and continuum open systems for which analogous duality relations (see \cite{carinci_duality_2013-1}) as well as  hierarchical constructions for the dual processes hold. Among these models, we mention  the  open  symmetric exclusion process $\SEP(\alpha)$, where up to $\alpha \in \N$ particles are allowed to each site (see, e.g., \cite{carinci_duality_2013-1}, whose jump rates differ from those of  the non-gradient generalized exclusion process studied, e.g., in \cite{kipnis_scaling_1999}) and the continuum $\BEP$ and $\KMP$ models.

We further observe that, by following our approach, the regime of fast boundary -- corresponding to $\tune < 0$, still remains open because a control uniform in time on the total mass of particles in the bulk is not at hand for the open  $\SIP$ as in the case, e.g., of  $\SEP$ and zero-range processes. Indeed,  the total mass of the system can be  uniformly  dominated due to hard core constraints  for  the open $\SEP$ and due to the monotonicity -- or, attractiveness --  of the zero-range process (see, e.g., \cite{frometa2020boundary}), property which is not satisfied	 by $\SIP$.  A second challenge consists in the study of  non-equilibrium and stationary fluctuations as well as  of dynamic and static large deviations around the hydrodynamic and hydrostatic limits, respectively, for the open $\SIP$. This is left for a future work.

We conclude this introduction with a short outline on the organization of  the paper. In Section \ref{section:setting} we introduce the particle system, the associated equilibrium and non-equilibrium measures, the dual process and the duality relations. Moreover, we define the functional setting we use to describe our main results, the hydrodynamic and hydrostatic limits stated in Section \ref{section:pdes}. Section \ref{section:proofs} is devoted to the proofs of the two main results. We conclude the paper with two appendices. In Appendix \ref{appendix:function_spaces} we present a complete and unified construction of the function spaces used for which existence and uniqueness of the solution to the hydrodynamic equations we consider follows at once. In Appendix \ref{appendix:RW}, we prove a result for a one-dimensional random walk  required in the proof of the hydrostatic limit.

\section{Setting}\label{section:setting}
In this section, we introduce the   particle system in contact with reservoirs, the duality properties and its stationary measures. Then, we present  the function spaces and the weak formulation of the limiting hydrodynamic equations. 

\subsection{Open symmetric inclusion process}
Let $N \in \N$ play the role of scaling parameter and $\VN \coloneqq \{1,\ldots, N-1 \}$ be the one-dimensional chain on which the particles hop. 
We define by $\mathcal X_N$ the configuration space given by
$
\mathcal X_N\coloneqq \N_0^{\VN} = \{0,1,\ldots\}^{\VN}\ ,
$
where, for any given $\eta \in \mathcal X_N$ and $x \in \VN$, $\eta(x)$ stands for the number of particles of the configuration $\eta$ at site $x$, referred to as occupation variable at $x$. The stochastic dynamics is described by the infinitesimal generator $\mathcal L^N$ whose action on local functions $f : \mathcal X_N\to \R$ is given by
\begin{align}\label{eq:generator}
	\mathcal L^N f\coloneqq \mathcal L^N_\bulk f  + \mathcal L^N_L f + \mathcal L^N_R f\ ,
\end{align}
where
\begin{align*}
	\mathcal L^N_\bulk f(\eta) \coloneqq&\ N^2\sum_{x\in \VN\setminus\{N-1\}} \left\{\begin{array}{r}
		\eta(x)\left(\alpha+\eta(x+1)\right) \left(f(\eta^{x,x+1})-f(\eta) \right)\\[.15cm]
		+\,\eta(x+1)\left(\alpha+\eta(x) \right)\left(f(\eta^{x+1,x})-f(\eta) \right)
	\end{array} \right\}\ ,\\[.2cm]
	\mathcal L^N_L f(\eta) \coloneqq&\ N^{2-\tune} \left\{\begin{array}{r} \alpha_L\scale_L\left(\alpha+\eta(1) \right)\left(f(\eta^{1,+})-f(\eta) \right)\\[.15cm]
		+\, \eta(1)\left(\alpha_L+\alpha_L\scale_L \right)\left(f(\eta^{1,-})-f(\eta) \right)
	\end{array} \right\}
\end{align*}
and
\begin{align*}
	\mathcal L^N_R f(\eta) \coloneqq&\ N^{2-\tune} \left\{\begin{array}{r} \alpha_R\scale_R\left(\alpha+\eta(N-1) \right)\left(f(\eta^{N-1,+})-f(\eta) \right)\\[.15cm]
		+\, \eta(N-1)\left(\alpha_R+\alpha_R\scale_R \right)\left(f(\eta^{N-1,-})-f(\eta) \right)
	\end{array} \right\}\ .	
\end{align*}
In the above expressions, $\eta^{x,y}$ stands for the configuration obtained from $\eta$ by removing a particle from site $x \in \VN$ (if any) and placing it at site $y \in \VN$, i.e., $\eta^{x,y}\coloneqq \eta-\delta_x+\delta_y \in \mathcal X_N$ with $\delta_x$ denoting the configuration consisting of a single particle at site $x \in \VN$. Furthermore,
\begin{align*}
	\begin{split}
		\eta^{1,+} \coloneqq&\ \eta+\delta_1 \\
		\eta^{1,-} \coloneqq&\ \eta-\delta_1
	\end{split}	
	\begin{split}
		\eta^{N-1,+} \coloneqq&\ \eta+\delta_{N-1}\\
		\eta^{N-1,-} \coloneqq&\  \eta-\delta_{N-1}\ .
	\end{split}
\end{align*}
The parameters $\alpha, \alpha_L, \alpha_R, \scale_L$ and $\scale_R$ are all positive and, while $\alpha$ stands for the bulk site attraction parameter, the others describe the interaction of the system with left and right reservoirs through the ending sites of the chain $\VN$. We remark that interpreting  $\alpha_L$ and $\alpha_R$, resp.\ 
\begin{equation}\label{eq:rhoLR}\rho_L\coloneqq\alpha_L \scale_L\qquad \text{and}\qquad \rho_R\coloneqq\alpha_R \scale_R\ ,
\end{equation} as the reservoirs' attraction, resp.\ reservoirs' particle density, parameters, the jump rates due to the reservoir interaction have exactly the same form as the jump rates in the bulk. 	Lastly, the parameter  $\tune \geq  0$ tunes the intensity of the reservoir interaction (see also Figure \ref{Fig1} below). We warn the reader that above and in what follows, for notational convenience, the dependence on $\tune \geq 0$ is never explicitly mentioned. 

In what follows, for all $\mu$ probability measures on $\mathcal X_N$, we let $\Pr^N_{\mu}$ and $\E^N_{\mu}$ denote the probability law and corresponding expectation of the process with generator $\mathcal L^N$ in \eqref{eq:generator} with initial distribution given by $\mu$. If the initial distribution is a Dirac measure, we will adopt the following shortcut: for all $\eta \in \mathcal X_N$, $\Pr^N_\eta\coloneqq \Pr^N_{\delta_\eta}$ and $\E^N_\eta\coloneqq \E^N_{\delta_\eta}$.

There is an immediate comparison of the open inclusion dynamics with the corresponding open exclusion dynamics: with the additional requirement of setting $\alpha \in \N$, the exclusion dynamics in the bulk is recovered by replacing the plus sign in the jump rates with the negative sign, e.g., $\eta(x)\left(\alpha-\eta(x+1) \right)$ in place of $\eta(x)\left( \alpha+\eta(x+1)\right)$; similarly for what concerns the reservoir interaction with the further restriction  $\scale_L, \scale_R \in (0,1)$.

\begin{remark}[\textsc{notation}]
	An alternative parametrization (see, e.g., \cite{carinci_duality_2013-1, goncalves_hydrodynamics_2019} for the $\SEP$)  of the open $\SIP$ employs positive parameters $a_L, a_R, b_L$ and $b_R$ as follows:
	\begin{align*}
		\mathcal L^N_Lf(\eta)
		\coloneqq&\ N^{2-\tune}\left\{\begin{array}{r}b_L \left(\alpha+\eta(1)\right)\, \left(f(\eta^{1,+})-f(\eta)\right)\\[.15cm] 
			+\, a_L\, \eta(1) \left(f(\eta^{1,-})-f(\eta)\right)\end{array} \right\}
	\end{align*}
	and
	\begin{align*}
		\mathcal L^N_Rf(\eta)
		\coloneqq&\ N^{2-\tune}\left\{\begin{array}{r}b_R \left(\alpha+\eta(N-1)\right) \left(f(\eta^{N-1,+})-f(\eta)\right)\\[.2cm]
			+\, a_R\, \eta(N-1) \left(f(\eta^{N-1,+})-f(\eta)\right)\end{array} \right\}\ ,
	\end{align*}
	which corresponds to setting
	\begin{align*}
		\begin{split}
			a_L &= \alpha_L\left(1+\scale_L\right)\\
			a_R &= \alpha_R\left(1+\scale_R\right)
		\end{split}
		\begin{split}
			b_L &= \alpha_L \scale_L\\
			b_R &= \alpha_R \scale_L\ .	
		\end{split}
	\end{align*}
\end{remark}
\begin{figure}
	\includegraphics[width=1.0\textwidth]{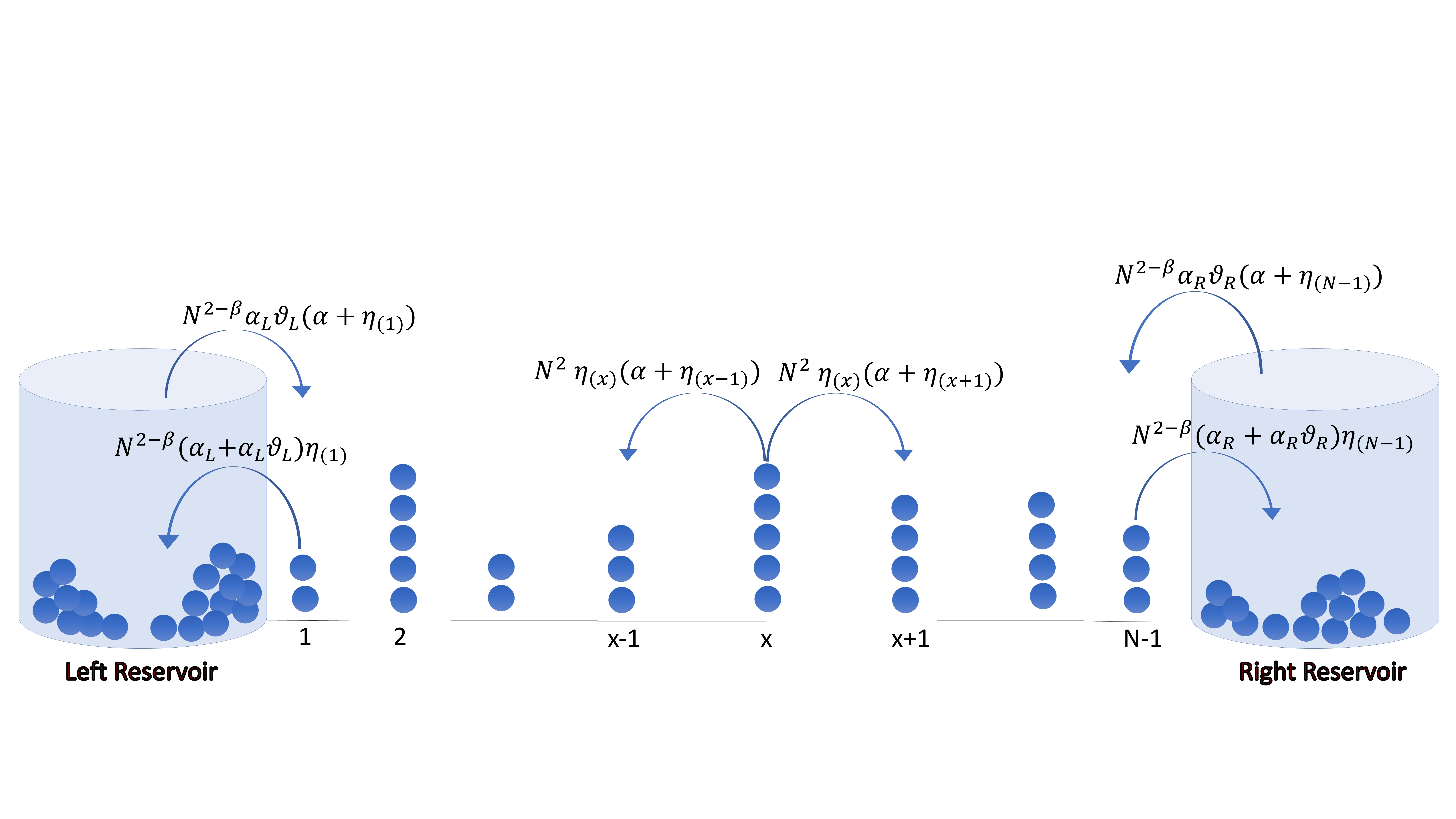}
	\caption[Open $\SIP$]{Schematic description of the dynamics of the one-dimensional open $\SIP$ on $\VN$ with parameters $\alpha, \alpha_L, \alpha_R, \scale_L, \scale_R > 0$ and $\tune \geq 0$.}
	\label{Fig1}
\end{figure}

The bulk dynamics for the open $\SIP$ is conservative and the total number of particles changes only due to particle injection and absorption of the reservoirs.
Unlike   the exclusion process for which each site may be occupied by at most a finite number of particles, for the inclusion process the occupation variables   admit no  prescribed upper bound. Nevertheless, in view of the form of the boundary interaction rates and  classical results on birth-death processes,  the particle system does not explode,  ensuring its existence for any finite initial configuration and  any time. 
\begin{proposition}[\textsc{non-explosiveness}]
	For all $N \in \N$,  $\tune \geq 0$ and  initial configurations $\eta \in \mathcal X_N$, the open $\SIP$ $\{\eta^N_t:\, t\geq 0\}$ with generator $\mathcal L^N$ is \emph{non-explosive}; namely, almost surely, in any  bounded interval of time  the system undergoes finitely many transitions and
	\begin{equation*}
		\left\| \eta^N_t \right\|_{\ell_N^1}\ \coloneqq\ \sum_{x \in \VN} \eta^N_t(x)\ <\ \infty\ ,\quad t \geq 0\ .
	\end{equation*}		
\end{proposition}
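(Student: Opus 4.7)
My plan is to use a Lyapunov-function argument based on the total-mass observable $V(\eta) \coloneqq \|\eta\|_{\ell^1_N} = \sum_{x \in \VN} \eta(x)$. The bulk dynamics $\mathcal L^N_\bulk$ is conservative, so $\mathcal L^N_\bulk V \equiv 0$ and only the boundary generators act on $V$. A direct computation at the left boundary gives
\begin{align*}
\mathcal L^N_L V(\eta)
&= N^{2-\tune}\Bigl\{\alpha_L \scale_L (\alpha + \eta(1)) - \eta(1)\, \alpha_L(1+\scale_L)\Bigr\}\\
&= N^{2-\tune}\Bigl\{\alpha\,\alpha_L\scale_L - \alpha_L\,\eta(1)\Bigr\} \leq N^{2-\tune}\alpha\,\alpha_L\scale_L\ ,
\end{align*}
and analogously at the right boundary. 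Summing, one obtains the key bound
\begin{equation*}
\mathcal L^N V(\eta) \leq C_N \coloneqq N^{2-\tune}\alpha\,(\alpha_L\scale_L + \alpha_R\scale_R)\ ,
\end{equation*}
uniform in $\eta \in \mathcal X_N$. Intuitively, only the reservoirs can create particles and their injection rate $\alpha_\cdot \scale_\cdot(\alpha+\eta(\cdot))$ is beaten by the extraction rate $\alpha_\cdot(1+\scale_\cdot)\eta(\cdot)$ whenever the boundary occupation is large, forcing a constant upper bound.

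Next I would invoke the minimal (Feller) construction of the pure-jump process associated with $\mathcal L^N$: the jump rates are bounded on every set $\{V \leq n\}$, so the process is well-defined up to the explosion time $\tau_\infty \coloneqq \lim_{n\to\infty} \tau_n$, where $\tau_n \coloneqq \inf\{t \geq 0 : V(\eta^N_t) \geq n\}$. Applying Dynkin's formula with the stopping time $\tau_n \wedge t$ and using the bound above,
\begin{equation*}
\E^N_\eta\bigl[V(\eta^N_{\tau_n \wedge t})\bigr] \leq V(\eta) + C_N\, \E^N_\eta[\tau_n \wedge t] \leq V(\eta) + C_N\, t\ .
\end{equation*}
By Markov's inequality, $\Pr^N_\eta(\tau_n \leq t) \leq n^{-1}(V(\eta) + C_N t) \to 0$ as $n \to \infty$, proving $\tau_\infty = \infty$ almost surely and hence non-explosiveness.

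To conclude the quantitative finiteness statement, note that the above estimate combined with the supermartingale property of $V(\eta^N_{t}) - C_N t$ (after localization) and a maximal inequality yields $\sup_{s \leq T} \|\eta^N_s\|_{\ell^1_N} < \infty$ almost surely for every $T > 0$. Since the total instantaneous jump rate out of a configuration $\zeta$ is bounded by a polynomial in $\|\zeta\|_{\ell^1_N}$ (the worst term $N^2 \eta(x)(\alpha+\eta(x+1))$ being at most quadratic), almost sure boundedness of the mass on $[0,T]$ implies almost sure boundedness of the cumulative jump rate, hence finitely many transitions in $[0,T]$. The only subtle point — and I expect it to be the main obstacle to phrase precisely rather than to prove — is the order in which one establishes the minimal construction, the martingale (not only local martingale) property and the mass bound; once $\mathcal L^N V \leq C_N$ is in hand, all these follow by a standard localization scheme.
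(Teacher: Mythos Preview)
Your argument is correct and complete in spirit; the Foster--Lyapunov drift bound $\mathcal L^N V \le C_N$ with $V(\eta)=\|\eta\|_{\ell^1_N}$ is a clean way to rule out explosion, and your handling of the passage from ``mass stays bounded'' to ``finitely many transitions'' via the polynomial bound on the total exit rate is exactly the point that needs to be made.

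The paper takes a different, shorter route: it ignores extractions entirely and observes that the total mass is stochastically dominated by a pure birth process on $\N_0$ with linear birth rates $r_n=N^{2-\tune}(\alpha_L\scale_L+\alpha_R\scale_R)(\alpha+n)$; the classical birth--death criterion $\sum_n r_n^{-1}=\infty$ then gives non-explosion of the dominating process and hence of the mass. Compared to this, your approach is sharper (you retain the extraction term and obtain a \emph{constant} drift bound rather than a linear one, which immediately yields $\E^N_\eta\|\eta^N_t\|_{\ell^1_N}\le\|\eta\|_{\ell^1_N}+C_N t$) and more portable (the Lyapunov scheme does not rely on the one-dimensional birth--death structure of the mass process). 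The paper's argument, on the other hand, is essentially a one-liner once the domination is stated, at the cost of a weaker intermediate estimate. Both proofs share the same implicit final step---bounded mass implies bounded total jump rate---which you make explicit and the paper leaves to the reader.
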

\begin{proof}
	The stochastic process
	\begin{equation}\label{eq:norm_eta}	
		\left\{\left\|\eta^N_t\right\|_{\ell^1_N}:\, t \geq 0 \right\}
	\end{equation}
	on $\N_0$ is stochastically dominated by the pure birth process on $\N_0$ started from $\left\|\eta^N_0\right\|_{\ell_N^1}= \left\|\eta\right\|_{\ell_N^1}$ and with birth rates $\left\{r_n:\, n \in \N_0 \right\}$ given by 
	\begin{equation*}
		r_n\coloneqq N^{2-\tune}(\alpha_L\scale_L+\alpha_R\scale_R)\, (\alpha + n)\ ,\quad n \in \N_0\ .
	\end{equation*}
	Since
	$
	\sum_{n \in \N_0} \frac{1}{r_n} =	 \infty
	$,
	such birth process  is non-explosive and, thus, by stochastic domination, also the process in \eqref{eq:norm_eta}.
\end{proof}

\subsubsection{Stationary equilibrium and non-equilibrium measures}\label{section:stationary_measures}
In absence of reservoirs, the $\SIP$ admits a one-parameter family of reversible product measures with marginals given by Negative Binomials with shape parameter $\alpha > 0$ (see, e.g., \cite{giardina_duality_2009, carinci_duality_2013-1}):
\begin{equation}\label{eq:reversible}
	\left\{\mu^N_\scale \coloneqq \otimes_{x \in \VN}\, \nu_\scale:\, \scale > 0\right\}\quad \text{with}\quad \nu_\scale \sim \NegBin(\alpha,\tfrac{\scale}{1+\scale})\ ,
\end{equation}
where our parametrization of $\nu_\scale$ is such that, for all $x \in \VN$,
\begin{align*}
	E_{\mu^N_\scale}\left[\eta(x) \right] =  \scale\alpha\quad \text{and}\quad E_{\mu^N_\scale}\left[\left(\eta(x)-\alpha\scale \right)^2 \right]=  \scale\left(1+\scale\right)\alpha\ .
\end{align*}
Here and in the sequel, for all $\mu$ probability measures,  $E_\mu$ denotes expectation with respect to $\mu$.  In presence of reservoirs, there exists a unique stationary measure $\mu^N_{\scale_L,\scale_R}$ and, depending on the values of $\scale_L$ and $\scale_R$, two different scenarios occur (see, e.g., \cite{carinci_duality_2013-1, floreani_boundary2020} for more details and proofs):
if $\scale_L = \scale_R = \scale > 0$, the system is in equilibrium and the unique stationary -- actually reversible --  measure $\mu^N_{\scale_L,\scale_R}$ is given by $\mu^N_\scale$ in \eqref{eq:reversible}, thus, is product and independent of the parameters $\tune, \alpha_L$ and $\alpha_R$. If $\scale_L\neq \scale_R$, the system is out of equilibrium and the unique stationary measure is not in product form, does depend on $\tune, \alpha_L, \alpha_R$ and it is only partially characterized (see \cite{floreani_boundary2020}). Indeed,  no matrix formulation as for the open exclusion (see, e.g., \cite{derrida_exact_1993-1}) is available for the inclusion process and, hence, two-point (and higher order) correlations are not, in general, explicit. However, to the purpose of deriving the hydrostatic limit  for the open inclusion process, the partial characterization provided in \cite{floreani_boundary2020} plays a crucial role.

\subsection{Duality}\label{section:duality} The duality property will be a key ingredient for all our results. In words, duality for a pair of Markov processes consists in finding an observable -- the so-called \emph{duality function} --  of the joint system whose expectation with respect to the evolution of one marginal equals the expectation with respect to the evolution of the second marginal. In the context of interacting particle systems, duality typically relates the expectation of suitable $n$-joint  moments of the occupation variables of one system with the evolution of $n$ dual interacting particles. Moreover, in presence of reservoirs, duality relates open  systems to dual particle systems with  purely absorbing boundary. Such a correspondence is  related to the well-known Feynman-Kac formulas for parabolic solutions to PDEs with boundary conditions (see, e.g., \cite[Chapter 9]{oksendal_stochastic_1998}), where, in the context of interacting particle systems, this Feynman-Kac formula holds not only for the expected density of particles, but also for suitable higher order  moments. 

\subsubsection{Absorbing process and duality relation}\label{section:dual_process}
Before introducing the duality function, let us describe the absorbing symmetric inclusion process, dual to the process with generator $\mathcal L^N$ defined in \eqref{eq:generator}. For such a dual process, particles evolve on the extended lattice $\VNh\coloneqq \{0,1,\ldots, N\}$ and we let $\widehat {\mathcal X}_N\coloneqq \N_0^{\VNh}$ denote the dual configuration space. The infinitesimal generator of the dual process,  $\widehat {\mathcal L}^N$, is given, for all local functions $f : \widehat{\mathcal X}_N\to \R$, by
\begin{align}\label{eq:generator_dual}
	\widehat{\mathcal L}^Nf\coloneqq \widehat{\mathcal L}^N_\bulk f+\widehat{\mathcal L}^N_Lf + \widehat{\mathcal L}^N_R f\ ,
\end{align}
where the bulk dynamics coincides with that of the open $\SIP$, namely, for all $\xi \in \widehat{\mathcal X}$, 
\begin{align*}
	\widehat{\mathcal L}^N_\bulk f(\xi) \coloneqq&\ N^2\sum_{x\in \VN\setminus\{N-1\}} \left\{\begin{array}{r}
		\xi(x)\left(\alpha+\xi(x+1)\right) \left(f(\xi^{x,x+1})-f(\xi) \right)\\[.15cm]
		+\,\xi(x+1)\left(\alpha+\xi(x) \right)\left(f(\xi^{x+1,x})-f(\xi) \right)
	\end{array} \right\}\ ,
\end{align*}
while the dynamics at left and right ends of $\VNh$ is purely absorbing: for all $\xi \in \widehat {\mathcal X}_N$, 
\begin{align*}
	\widehat{\mathcal L}^N_L f(\xi) \coloneqq&\ N^{2-\tune}  \alpha_L\, \xi(1) \left(f(\xi^{1,0})-f(\xi) \right)\\
	\widehat{\mathcal L}^N_R f(\xi) \coloneqq&\ N^{2-\tune}  \alpha_R\, \xi(N-1)\left(f(\xi^{N-1,N})-f(\xi) \right)\ .	
\end{align*}
We observe that this stochastic dynamics conserves the total number of particles in the system. Moreover, for all $\widehat \mu$ probability measures on $\widehat{\mathcal X}_N$, we let $\widehat \Pr^N_{\widehat \mu}$ and $\widehat \E^N_{\widehat\mu}$ denote the probability law and corresponding expectation of the process with generator $\widehat{\mathcal L}^N$ in \eqref{eq:generator_dual} with initial distribution given by $\widehat \mu$. For notational convenience, for all $\xi \in \widehat{\mathcal X}_N$, $\widehat \Pr^N_\xi\coloneqq \widehat \Pr^N_{\delta_\xi}$ and $\widehat \E^N_\xi\coloneqq \widehat \E^N_{\delta_\xi}$.

\

Let us define the following  function $\mathcal D_N: \widehat{\mathcal X}\times \mathcal X\to \R$ given by
\begin{equation}\label{eq:duality_functions}
	\mathcal D_N(\xi,\eta)\ \coloneqq\ \left(\scale_L \right)^{\xi(0)} \left(\prod_{x\in \VN}d(\xi(x),\eta(x))\right) \left(\scale_R \right)^{\xi(N)}\ ,
\end{equation}
where
\begin{equation*}
	d(k,n)\ \coloneqq\ \tfrac{n!}{(n-k)!} \tfrac{\Gamma(\alpha)}{\Gamma(\alpha+k)}\1_{\{k \leq	 n\}}\ ,\quad k,n \in \N_0\ .
\end{equation*}
We remark that, for all $n \in \N_0$, 
\begin{align*}
	d(0,n) =& 1\ ,\quad
	d(1,n) = \tfrac{n}{\alpha}\quad \text{and}\quad
	d(2,n) = \tfrac{n \left(n-1 \right)}{\alpha \left(\alpha+1 \right)}\ ,
\end{align*}
and, more generally, $d(k,n)$ is a weighted $k$-th falling factorial for the $n$-variable. Moreover, we will need the following property concerning factorial moments of Negative Binomial distributions: for all $k \in \N_0$ and $\scale > 0$,
\begin{align}\label{eq:single_site_duality_integrals}
	E_{\nu_\scale}\left[d(k,\cdot)\right]=\sum_{n \in \N_0} d(k,n)\, \nu_\scale(n)= \scale^k\ .
\end{align}

It was shown in \cite{carinci_duality_2013-1}  that the open and absorbing $\SIP$ are dual with the function $\mathcal D_N$ in \eqref{eq:duality_functions} as duality function, i.e., the following identity -- referred to as \emph{duality relation},
\begin{align}\label{eq:duality_relation}
	\mathcal L^N \mathcal D_N(\xi,\cdot)(\eta) = \widehat {\mathcal L}^N \mathcal D_N(\cdot,\eta)(\xi)
\end{align}
holds for all $N \in \N$, $\tune \geq 0$, $\eta \in \mathcal X_N$ and $\xi \in \widehat{\mathcal X}_N$.  We note that the dual system stochastic dynamics does not depend on the parameters $\scale_L$ and $\scale_R$,  while the duality function $\mathcal D_N$  does. By Kolmogorov equations, the infinitesimal relation \eqref{eq:duality_relation} establishes that, for all $\eta \in \mathcal X_N$, the function
\begin{align*}
	(t,\xi) \mapsto  \E^{N}_\eta\left[\mathcal D_N(\xi,\eta_t) \right] 
\end{align*}
is the solution of the following deterministic linear Cauchy  problem:
\begin{align*}
	\left\{\begin{array}{rclll}
		\frac{d}{dt} f(t,\xi)&=&\widehat{\mathcal L}^N f(t,\xi) &,&\xi \in \widehat{\mathcal X}_N\ ,\ t \geq 0\\[.15cm]
		f(0,\xi) &=& \mathcal D_N(\xi,\eta) &,&\xi \in \widehat{\mathcal X}_N\ .
	\end{array}
	\right.
\end{align*}

\subsubsection{One and two dual particles}
For the sequel, it will be important to express the duality relation in \eqref{eq:duality_relation} and its consequences in terms of \emph{labeled} dual particles. The two cases of interest are those in which the dual system consists of either one or two particles only. 

For what concerns the case of just one particle,  \eqref{eq:duality_relation} rewrites as 
\begin{align}\label{eq:duality_one}
	\mathcal L^N D_N(x,\cdot)(\eta) = A^N D_N(\cdot,\eta)(x)\ ,
\end{align}
where, for all $\eta \in \mathcal X_N$,	
\begin{align*}
	D_N(x,\eta)\coloneqq \mathcal D_N(\delta_x,\eta)= \begin{dcases}
		\tfrac{\eta(x)}{\alpha} &\text{if}\ x \in \VN\\
		\scale_L &\text{if}\ x =0\\
		\scale_R &\text{if}\ x = N\ ,
	\end{dcases}
\end{align*}
and $A^N$ is the generator of a single -- thus, non-interacting --  particle on $\VNh$ with the two endpoints $\{0,N\}$ being absorbing: for all $f: \VNh\to \R$, 
\begin{align}\label{eq:generator_A}
	\nonumber
	A^Nf(x)\coloneqq&\ \1_{\{x \in \VN\}}N^2\sum_{y \in \VN} \1_{\{|y-x|=1\}}\alpha \left(f(y)-f(x)\right)\\ 
	\nonumber
	+&\ \1_{\{x=1\}} N^{2-\tune}   \alpha_L \left(f(0)-f(1)\right)\\[.15cm]
	+&\	  \1_{\{x=N-1\}}N^{2-\tune}  \alpha_R \left(f(N)-f(N-1)\right)\ . 
\end{align}
Let us observe that, restricted to the subspace  of functions $f: \VNh\to \R$ which equal zero at the boundary $\{0,N\}$, $A^N$ is symmetric, i.e., for all $f, g: \VNh\to \R$ such that $f(0)=f(N)=g(0)=g(N)=0$, we have
\begin{align}\label{eq:symmetry_AN}
	\llangle f, A^Ng\rrangle_N = \llangle  A^Nf, g\rrangle_N\ ,
\end{align}
where
\begin{align}\label{eq:inner_product_1}
	\llangle f, g \rrangle_N \coloneqq \frac{1}{N}  \sum_{x \in \VN } f(x)\, g(x)\, \alpha  \ .	
\end{align}

Regarding the case of a dual system consisting of two particles, analogous considerations hold and \eqref{eq:duality_relation} boils down to
\begin{align}\label{eq:duality_two}
	\mathcal L^ND_N(x,y,\cdot)(\eta)= B^ND_N(\cdot\, ,\cdot,\eta)(x,y)\ ,
\end{align}
where, for all $\eta \in \mathcal X_N$,
\begin{align*}
	D_N(x,y,\eta)\coloneqq \mathcal D_N(\delta_x+\delta_y,\eta)= \begin{dcases}\tfrac{\eta(x)\left(\eta(x)-1 \right)}{\alpha\left(\alpha+1 \right)} &\text{if}\ x=y\in \VN\\
		D_N(x,\eta) D_N(y,\eta) &\text{otherwise}\ , 
	\end{dcases}
\end{align*}
and $B^N$ is the generator of two inclusion particles on $\VNh$ with absorbing sites $\{0,N\}$:
\begin{align}\label{eq:generator_two_particles}
	\nonumber
	B^Nf(x,y)
	\coloneqq&\ A^Nf(\cdot,y)(x) + A^Nf(x,\cdot)(y)\\[.10cm]
	+&\, N^2 \1_{\{x,y\neq 0,N\}} \1_{\{|x-y|=1 \}} \left(\left(f(x,x)-f(x,y)\right)+\left(f(y,y)-f(x,y) \right) \right)\ ,
\end{align}
for all functions $f : \VNh \times \VNh \to \R$. For such functions, let us introduce the following inner product
\begin{align}\label{eq:inner_product2}
	\llangle f, g \rrangle_{N\times N}\coloneqq \frac{1}{N^2} \sum_{x \in \VN} \sum_{y \in \VN} f(x,y)\, g(x,y)\, \alpha \left(\alpha+\1_{\{x=y\}} \right)\ .
\end{align}
On the space of functions which are zero on the boundary of $\VNh \times \VNh$, the generator $B^N$ is symmetric with respect to $\llangle \cdot,\cdot\rrangle_{N\times N}$, i.e., 
\begin{align}\label{eq:symmetryBN}
	\llangle f, B^N g\rrangle_{N\times N} = \llangle  B^Nf,  g\rrangle_{N\times N}
\end{align}
for all $f, g: \VNh\times \VNh\to \R$ such that $f(0,\cdot)=f(N,\cdot)=f(\cdot,0)=f(\cdot,N)\equiv0$ and, analogously, for $g$.

\subsection{Test function spaces}\label{section:function_spaces}
In this section, we present, depending on the values of the parameter $\tune \geq 0$, the test function spaces needed to uniquely characterize the weak solution of the limiting hydrodynamic equations. The test function spaces we consider are nuclear Fr\'{e}chet spaces $\mathscr S$ and the  solutions will take values in their dual space of tempered distributions $\mathscr S'$. The construction is  standard and follows the ideas in, e.g., \cite[Chapter 11]{kipnis_scaling_1999} and \cite[Chapter 1]{kallianpur_xiong_1995} of constructing a nested family of Hilbert spaces $\mathcal H_k$, $k \in \Z$, with $\mathcal H_0 = L^2([0,1])$ and for which the canonical embeddings $\mathcal H_{k+k_\ast} \hookrightarrow \mathcal H_k$ are Hilbert-Schmidt for some $k_\ast \in \N$ and for all $k \in \Z$. The main difference in our context compared to the setting in \cite[Chapter 11]{kipnis_scaling_1999} is that, for different values of $\tune \geq 0$, different self-adjoint extensions  of the  Laplacian -- corresponding to different boundary conditions --  must be employed. We present some essential properties of such spaces in Proposition \ref{proposition:characterization_function_spaces} below and leave the details of their construction to Appendix \ref{appendix:function_spaces} below.

We acknowledge that several choices of   test function spaces (and, thus, of weak solutions to the corresponding PDEs, see Section \ref{section:pdes} below) have been employed in the hydrodynamic limit literature. For instance, in \cite{goncalves_hydrodynamics_2019}, more standard Sobolev spaces satisfying an energy estimate and boundary conditions are considered. By taking the aforementioned nuclear space $\mathscr S$ as space of test functions, we make a different choice. This is mainly motivated by the fact that we aim at a unified setting for both hydrodynamics and fluctuation results (to be considered in a future work). In fact, on the one hand, this setting is certainly considered to be the  natural one for the study of fluctuations (see e.g.\ \cite[Chapter 11]{kipnis_scaling_1999}); on the other hand, hydrodynamic results are available in this same framework in a number of  works, see, e.g., \cite{de_masi_mathematical_1991} and references therein. Moreover, our construction of such spaces is different from  the one used in related publications (see e.g.\ \cite{franco_phase_2013,franco2015equilibrium, goncalves_non-equilibrium_2019, bernardin2020microscopic}): there, the authors first define a candidate space of test functions and then verify, knowing some explicit information on suitable orthonormal bases of eigenfunctions, their nuclear structure. In our approach, we first build such spaces from abstract self-adjoint Laplacians  and then  extract  properties of the test functions, without the need of fully characterizing this space. We believe this latter approach to be best suited for proving  scaling limits on more general  geometries.

In what follows,  we distinguish between three different regimes depending on the values of the parameter $\tune \geq 0$ ($\tune < 1$, $\tune = 1$ and $\tune > 1$) corresponding, respectively, to Dirichlet, Robin and Neumann boundary conditions.  

\begin{proposition}\label{proposition:characterization_function_spaces}
	For each of the three regimes, $\tune < 1$, $\tune =1$ and $\tune > 1$, there exists a nuclear Fr\'{e}chet space $\mathscr S=\mathscr S_\tune$ which continuously embeds into $L^2([0,1])$ and which consists of $\mathcal C^\infty([0,1])$ functions, i.e., smooth functions in $(0,1)$ whose derivatives of all orders admit a continuous extension to $[0,1]$. 
	Moreover, depending on the values of $\tune \geq 0$, the test functions in $\mathscr S$ satisfy the following boundary conditions:	
	
	\

	\noindent \emph{Dirichlet} {\normalfont($\tune < 1$)}.  If $G \in\mathscr S$, then 
	\begin{align}\label{eq:boundary_conditions_dirichlet}
		\left(\frac{\dd^+}{\dd u} \right)^{2\ell}\bigg|_{u=0} G = \left(\frac{\dd^-}{\dd u} \right)^{2\ell}\bigg|_{u=1}G = 0
	\end{align}
	holds for all $\ell \in \N_0$.
	
	\
	
	\noindent \emph{Robin} {\normalfont($\tune=1$)}. 
	If $G \in \mathscr S$, then 
	\begin{align}\label{eq:boundary_conditions_robin}
		\nonumber
		\left(\frac{\dd^+}{\dd u} \right)^{2\ell+1}\bigg|_{u=0} G &= \frac{\alpha_L}{\alpha} \left(\frac{\dd^+}{\dd u} \right)^{2\ell}\bigg|_{u=0} G \\  \left(\frac{\dd^-}{\dd u} \right)^{2\ell+1}\bigg|_{u=1}G &= \frac{\alpha_R}{\alpha}\left(\frac{\dd^-}{\dd u} \right)^{2\ell}\bigg|_{u=1}G
	\end{align}	
	holds for all $\ell \in \N_0$.

	\
	
	\noindent \emph{Neumann} {\normalfont($\tune>1$)}. If $G \in \mathscr S$, then
	\begin{align}\label{eq:boundary_conditions_neumann}
		\left(\frac{\dd^+}{\dd u} \right)^{2\ell+1}\bigg|_{u=0} G = \left(\frac{\dd^-}{\dd u} \right)^{2\ell+1}\bigg|_{u=1}G = 0
	\end{align}
	holds for all $\ell \in \N_0$.
\end{proposition}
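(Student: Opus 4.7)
The plan is to build, separately for each regime $\tune < 1$, $\tune = 1$, $\tune > 1$, a self-adjoint extension $\mathcal{L}_\tune$ of $-\dd^2/\dd u^2$ on $L^2([0,1])$---with Dirichlet, Robin (coefficients $\alpha_L/\alpha$ at $u = 0$ and $\alpha_R/\alpha$ at $u = 1$), or Neumann boundary conditions, respectively---and then to define $\mathscr{S} = \mathscr{S}_\tune$ as the projective limit of a Hilbertian scale built from powers of $\mathcal{L}_\tune$. This is the standard Gelfand-triple construction, as in \cite[Chapter 11]{kipnis_scaling_1999} and \cite[Chapter 1]{kallianpur_xiong_1995}, specialized to the three self-adjoint extensions dictated by $\tune$.

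By classical Sturm--Liouville theory, each such $\mathcal{L}_\tune$ is self-adjoint---the boundary terms $[fg' - f'g]_0^1$ arising from integration by parts vanish under the prescribed one-sided conditions on $f$ and $g$---bounded below, and has compact resolvent via Rellich's theorem. Hence it admits a complete orthonormal basis $\{\varphi_n\}_{n \in \N}$ of smooth eigenfunctions in $L^2([0,1])$ with eigenvalues $\lambda_n \to +\infty$ satisfying Weyl's asymptotics $\lambda_n \sim n^2$. Fixing $C > 0$ so that $1 + \lambda_n + C > 0$ for every $n$, I would define, for each $k \in \Z$, the Hilbert space $\mathcal{H}_k$ as the completion of $\mathrm{span}\{\varphi_n : n \in \N\}$ with respect to
\begin{equation*}
\|f\|_k^2 \coloneqq \sum_{n \in \N} (1 + \lambda_n + C)^{k}\, |\langle f, \varphi_n\rangle_{L^2}|^2\ .
\end{equation*}
This yields a nested scale with $\mathcal{H}_0 = L^2([0,1])$ and Hilbert--Schmidt embeddings $\mathcal{H}_{k+1} \hookrightarrow \mathcal{H}_k$, since $\sum_n (1 + \lambda_n + C)^{-1} < \infty$ by Weyl's law. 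Setting $\mathscr{S} \coloneqq \bigcap_{k \in \N} \mathcal{H}_k$ with the projective-limit topology produces a nuclear Fréchet space continuously embedded in $L^2([0,1])$. Iterated elliptic regularity for $\mathcal{L}_\tune$, combined with the one-dimensional Sobolev embeddings $H^k([0,1]) \hookrightarrow \mathcal{C}^{k-1}([0,1])$, guarantees $\mathscr{S} \subset \mathcal{C}^\infty([0,1])$ in the sense required by the statement.

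The iterated boundary conditions \eqref{eq:boundary_conditions_dirichlet}--\eqref{eq:boundary_conditions_neumann} then follow by observing that any $G \in \mathscr{S}$ belongs to $\mathrm{Dom}(\mathcal{L}_\tune^\ell)$ for every $\ell \in \N_0$, so in particular $\mathcal{L}_\tune^\ell G \in \mathrm{Dom}(\mathcal{L}_\tune)$. Since $G$ is smooth, $\mathcal{L}_\tune^\ell G$ coincides pointwise on $(0,1)$ with $(-1)^\ell G^{(2\ell)}$, and the basic boundary conditions defining $\mathrm{Dom}(\mathcal{L}_\tune)$, applied to this function, translate verbatim into the stated conditions on $G^{(2\ell)}$ and $G^{(2\ell+1)}$ at the endpoints. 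I expect the main delicate points to be: (i) calibrating the Robin extension so that the coefficients read exactly $\alpha_L/\alpha$ and $\alpha_R/\alpha$---a choice forced by the scaling $N^{2-\tune}$ at $\tune = 1$, and needed so that test functions can later be plugged into the weak formulation of the hydrodynamic PDE without producing uncontrolled boundary contributions; and (ii) ensuring that $\mathscr{S}$ is non-trivial, and indeed dense in $L^2([0,1])$, which however is automatic since the smooth eigenfunctions $\varphi_n$ themselves lie in $\mathscr{S}$.
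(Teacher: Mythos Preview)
Your proposal is correct and follows essentially the same construction as the paper: both build $\mathscr S$ from the Hilbertian scale generated by powers of the Dirichlet/Robin/Neumann Laplacian, invoke Weyl's asymptotics $\lambda_n\sim n^2$ for the Hilbert--Schmidt property, and use Sobolev embeddings for smoothness. The one notable difference is in the verification of the iterated boundary conditions: the paper expands $G$ in eigenfunctions and proves, via a pointwise estimate $\sup_u |\psi_n^{(\ell)}(u)|^2 \lesssim \lambda_n^{\ell+1}$, that the derivative series converges uniformly, so $G$ inherits the boundary conditions from the $\psi_n$; your argument---that $\mathcal L_\tune^\ell G \in \mathrm{Dom}(\mathcal L_\tune)$ combined with the pointwise identification $\mathcal L_\tune^\ell G = (-\alpha)^\ell G^{(2\ell)}$ coming from iterated $H^2$-regularity---is more direct and avoids that estimate entirely, at the cost of relying on the explicit domain characterizations (which in one dimension are unproblematic since $H^2\hookrightarrow \mathcal C^1$ makes the boundary traces classical).
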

We defer the proof of the above proposition to Appendix \ref{appendix:function_spaces}.

\section{Hydrodynamic and hydrostatic limits}\label{section:pdes}
In this section, we make precise  the notion of weak solution to the hydrodynamic equations which we use all throughout (Definition \ref{definition:solutions} below); then, we present the statements of both hydrodynamic and hydrostatic limits for the open $\SIP$ (Theorems \ref{theorem:main_hydrodynamics} and \ref{theorem:main_hydrostatic}, resp.,  below). 

Let us recall that, for all $\tune \geq 0$, $\mathscr S'=\mathscr S'_\tune$ denotes the strong topological dual of $\mathscr S=\mathscr S_\tune$, the space of test functions introduced  in Section \ref{section:function_spaces} above. Since we state our hydrodynamic and hydrostatic  limits in terms of convergence in the space of distributions, we characterize these limits as the unique solutions in $\mathscr S'$ of the following formal partial differential equations:
\begin{equation}\label{eq:cauchy_theta}
	\left\{\begin{array}{rclll}
		\partial_t \scale(t,u) &=& \alpha\, \partial^2_u \scale(t,u) && t \in [0,\infty)\, ,\  u \in (0,1)\\[.2cm]
		\gamma_L\,\partial_u^+\scale(t,0)&=& \lambda_L\,\scale(t,0) + \sigma_L &&  t\in (0,\infty)\\[.2cm]
		\gamma_R\,\partial_u^-\scale(t,1) &=& \lambda_R\, \scale(t,1) +\sigma_R &&  t\in (0,\infty)\\[.2cm]
		\scale(0,u) &=& \scale_0(u) && u\in [0,1]\ ,		
	\end{array}\right.
\end{equation}	
where $\gamma_L, \gamma_R,  \lambda_L, \lambda_R, \sigma_L, \sigma_R \in \R$ are determined according to the value of $\tune\geq 0$ and the system parameters. In particular, if $\tune < 1$, then we will recover Dirichlet boundary conditions with
\begin{equation}
	\gamma_L = \gamma_R = 0\ ,\qquad \lambda_L = \lambda_R = 1\ ,\qquad \sigma_L = \scale_L\ ,\qquad  \sigma_R =\scale_R\ ;	
\end{equation}
if $\tune =1$, we will recover Robin boundary conditions with
\begin{equation}
	\gamma_L = \gamma_R=\alpha\ ,\qquad \lambda_L = \alpha_L\ ,\qquad \lambda_R = \alpha_R\ ,\qquad \sigma_L = -\alpha_L\, \scale_L\ ,\qquad \sigma_R = -\alpha_R\, \scale_R\ ;
\end{equation} if $\tune > 1$, we will recover Neumann boundary conditions with
\begin{equation}
	\gamma_L = \gamma_R = 1\ ,\qquad \lambda_L = \lambda_R = \sigma_L = \sigma_R = 0\ .
\end{equation}
If we let $h$ denote a stationary solution -- not necessarily unique -- of the  boundary Cauchy problem \eqref{eq:cauchy_theta}, then $\scale$ in \eqref{eq:cauchy_theta} above decomposes as $\scale= h+g$, where $g$ formally satisfies
\begin{equation}
	\left\{\begin{array}{rclll}
		\partial_t g(t,u) &=& \alpha\, \partial^2_u g(t,u) && t \in [0,\infty)\, ,\  u \in (0,1)\\[.2cm]
		\gamma_L\, \partial_u^+g(t,0) &=& \lambda_L\, g(t,0) &&  t\in (0,\infty)\\[.2cm]
		\gamma_R\, \partial_u^- g(t,1) &=& \lambda_R\, g(t,1) &&  t\in (0,\infty)\\[.2cm]
		g(0,u) &=& \scale_0(u)-h(u) && u\in [0,1]		\ .
	\end{array}\right.
\end{equation}

Before presenting the precise definition of solutions in $\mathscr S'$, we need to introduce some notation.
For all $G \in \mathscr S$ and $g \in \mathscr S'$, we define $\langle G, g\rangle\coloneqq g(G)$. We note that $L^2([0,1])\subsetneq \mathscr S'$ and, if, e.g.,  $g \in L^2([0,1])$, then  $\langle G, g\rangle$ is the usual inner product in $L^2([0,1])$. Moreover,  we let $\mathcal C([0,\infty),\mathscr S')$ and $\mathcal D([0,\infty),\mathscr S')$ denote the spaces of $\mathscr S'$-valued continuous and \emph{c\`{a}dl\`{a}g}, respectively, functions on $[0,\infty)$ (see, e.g., \cite[\S2.4]{kallianpur_xiong_1995}, as well as, Appendix \ref{appendix:function_spaces} below). 	Finally, for all $\tune \geq 0$, $\mathcal A: \mathscr S\to \mathscr S$ denotes the bounded linear operator introduced in Appendix \ref{appendix:function_spaces} below,
 which acts on smooth functions $G \in \mathcal C^\infty([0,1])$ simply as the rescaled Laplacian
\begin{align*}
	\mathcal A G = \alpha\, \partial^2_u G\ .	
\end{align*}

\begin{definition}[\textsc{Solutions in $\mathscr S'$}]\label{definition:solutions} Let $\tune \geq 0$. 
	Given  	 $\scale_0  \in  \mathscr S'$, 	we say that $\{\scale(t) : t \in [0,\infty) \} \subset \mathscr S'$ is a  solution of the   Dirichlet, Robin or Neumann  problem -- depending on whether $\tune < 1$, $\tune = 1$ or $\tune > 1$, respectively --	with initial condition $\scale_0$ 	if there exists $\left\{g(t): t \in [0,\infty)\right\} \subset \mathscr S'$ for which,   for all $G \in \mathscr S$	 and for all times $t \geq 0$, the following two identities hold:
	\begin{equation}\label{eq:decomposition}
		\langle G, \scale(t) \rangle = \langle G, h_{\scale_L,\scale_R}\dd u\rangle + \langle G, g(t) \rangle\ .
	\end{equation}
	and
	\begin{equation}
		\langle G,g(t)\rangle = \langle G, (\scale_0-h_{\scale_L,\scale_R}\dd u) \rangle + \int_0^t \langle \mathcal A G, g(s)\rangle\, \dd s\ .
	\end{equation}	
	In the above expressions,  $h_{\scale_L,\scale_R}\,\dd u \in \mathscr S'$ is the distribution that is absolutely continuous with respect to Lebesgue and whose density 	  $h_{\scale_L,\scale_R} \in \mathcal C^\infty([0,1])$ is a  stationary solution to \eqref{eq:cauchy_theta}, i.e.,  given by
	\begin{align}\label{eq:harmonic_dirichlet}
		h_{\scale_L,\scale_R}(u) =  \scale_L +  (\scale_R-\scale_L)\, u\ ,\quad u \in [0,1]\ , 
	\end{align}
	if $\tune < 1$,  by
	\begin{align}\label{eq:harmonic_robin}
		h_{\scale_L,\scale_R}(u)= 
		\scale_L +\left(\scale_R-\scale_L \right) \left(\frac{\tfrac{\alpha}{\alpha_L}+u}{\tfrac{\alpha}{\alpha_L}+1+\tfrac{\alpha}{\alpha_R}} \right)\ ,\quad u \in [0,1]\ ,	
	\end{align}
	if $\tune = 1$, and by
	\begin{equation}\label{eq:harmonic_neumann}	
		h_{\scale_L,\scale_R}(u) = \scale_L +\left( \scale_R-\scale_L\right) \frac{\alpha_R}{\alpha_L+\alpha_R}= \frac{\rho_L+\rho_R}{\alpha_L+\alpha_R}\ ,\quad u \in [0,1]\ ,
	\end{equation} 
	if $\tune > 1$.
\end{definition}

As a consequence of the construction of the test function spaces in Section \ref{section:function_spaces} (see also Appendix \ref{appendix:function_spaces}) and the theory of generalized Ornstein-Uhlenbeck processes (see, e.g., \cite{holley_generalized_1978}) applied to this deterministic setting, the following existence and uniqueness result holds. 
\begin{proposition}[\textsc{well-posedness of hydrodynamic equations in $\mathscr S'$}]\label{proposition:well_posedness}
	For all $\tune \geq 0$, the  solution in $\mathscr S'$ with initial condition $\scale_0 \in \mathscr S'$  as defined in Definition \ref{definition:solutions} exists and is unique in $\mathcal C([0,\infty),\mathscr S')$.		
\end{proposition}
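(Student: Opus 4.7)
The plan is to reduce the problem to a homogeneous Cauchy problem in $\mathscr{S}'$ and then invoke semigroup theory on the nuclear Fr\'{e}chet space $\mathscr{S}$. By Appendix A (as referenced in the statement), the operator $\mathcal{A}: \mathscr{S} \to \mathscr{S}$ is continuous and arises from a self-adjoint extension of $\alpha\,\partial_u^2$ on $L^2([0,1])$ with Dirichlet, Robin, or Neumann boundary conditions according to the value of $\tune$. Since $\mathcal{A}$ is self-adjoint and dissipative, functional calculus produces a strongly continuous contraction semigroup $\{T_t\}_{t\geq 0}$ on $L^2([0,1])$. The nested construction $\mathscr{S}=\bigcap_k \mathcal{H}_k$, in which the Hilbert norms are defined through powers of $I-\mathcal{A}$, ensures that $T_t$ restricts to a $C_0$-semigroup on each $\mathcal{H}_k$ and therefore on $\mathscr{S}$ with its Fr\'{e}chet topology.

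First, I would separate the stationary part by writing $\scale(t)=h_{\scale_L,\scale_R}\,\dd u+g(t)$ as in Definition \ref{definition:solutions}, so that the problem reduces to finding $\{g(t):t\geq 0\}\subset\mathscr{S}'$ satisfying the homogeneous integral equation
\[
\langle G,g(t)\rangle = \langle G,(\scale_0-h_{\scale_L,\scale_R}\,\dd u)\rangle + \int_0^t\langle \mathcal{A}G,g(s)\rangle\,\dd s, \qquad G\in\mathscr{S},\ t\geq 0.
\]
For existence, define the adjoint semigroup $T_t^*:\mathscr{S}'\to\mathscr{S}'$ by duality, $\langle G,T_t^*\nu\rangle\coloneqq\langle T_tG,\nu\rangle$; because $T_t$ is continuous on $\mathscr{S}$, $T_t^*$ is well defined, and the strong continuity of $T_t$ on $\mathscr{S}$ yields continuity of $t\mapsto T_t^*\nu$ in the strong topology of $\mathscr{S}'$. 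Setting $g(t)\coloneqq T_t^*(\scale_0-h_{\scale_L,\scale_R}\,\dd u)$ and applying the fundamental identity $T_tG-G=\int_0^t \mathcal{A}T_sG\,\dd s$ in $\mathscr{S}$ (which transfers to $\mathscr{S}'$ by pairing with $\scale_0-h_{\scale_L,\scale_R}\,\dd u$) verifies the integral equation.

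For uniqueness, let $g_1,g_2$ be two solutions with the same datum and set $\delta(t)\coloneqq g_1(t)-g_2(t)\in\mathcal{C}([0,\infty),\mathscr{S}')$. Then $\delta(0)=0$ and $\langle G,\delta(t)\rangle=\int_0^t\langle \mathcal{A}G,\delta(s)\rangle\,\dd s$ for every $G\in\mathscr{S}$. Fixing $T>0$ and $G\in\mathscr{S}$, I would test the equation against $T_{T-s}G\in\mathscr{S}$: since $s\mapsto T_{T-s}G$ is a $\mathcal{C}^1$ curve in $\mathscr{S}$ with derivative $-\mathcal{A}T_{T-s}G$, a Leibniz--type computation gives $\frac{\dd}{\dd s}\langle T_{T-s}G,\delta(s)\rangle=0$ in the integrated sense, whence $\langle G,\delta(T)\rangle=\langle T_T G,\delta(0)\rangle=0$. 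As $G$ and $T$ are arbitrary, $\delta\equiv 0$.

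The main obstacle is transferring the standard $L^2$ semigroup theory to the nuclear Fr\'{e}chet space $\mathscr{S}$: one must verify that $T_t$ maps $\mathscr{S}$ into itself and is strongly continuous with respect to \emph{every} seminorm defining the Fr\'{e}chet topology. This reduces to checking that each $(I-\mathcal{A})^{k/2}T_t=T_t(I-\mathcal{A})^{k/2}$ is bounded on $L^2$ uniformly in compact time intervals, which follows from the spectral decomposition of the self-adjoint extension. The crucial input is that the boundary conditions \eqref{eq:boundary_conditions_dirichlet}--\eqref{eq:boundary_conditions_neumann} identified in Proposition \ref{proposition:characterization_function_spaces} are precisely the compatibility conditions making arbitrary powers of $\mathcal{A}$ leave $\mathscr{S}$ invariant; this is exactly the framework of generalized Ornstein--Uhlenbeck processes \cite{holley_generalized_1978}, from which the desired well-posedness follows at once.
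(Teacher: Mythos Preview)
Your proposal is correct and follows essentially the same route as the paper. The paper's proof is even terser: it simply notes that properties \ref{it:p1}--\ref{it:p4} from Appendix~\ref{appendix:function_spaces} hold and then invokes \cite[Theorem~1.23]{holley_generalized_1978} with $B\equiv 0$; you have effectively unpacked what that theorem says in the noiseless case (adjoint semigroup for existence, time-reversal duality trick for uniqueness) before arriving at the same citation.
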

\begin{proof}
	By the construction  of the nuclear Fr\'{e}chet space $\mathscr S$ and the consequent properties \ref{it:p1}--\ref{it:p4} in Appendix \ref{appendix:function_spaces}, Theorem 1.23 in \cite{holley_generalized_1978} without noise, i.e.,  taking $B \equiv 0$, applies.
\end{proof}

All throughout, since we state our results in terms of  solutions in $\mathscr S'$ and the investigation of their regularity is not the prominent goal of our work, we refer the interested reader to, e.g., \cite{Evans} for further details, for instance, on the assumptions on the initial condition which guarantee  such solutions to be actually strong ones for \eqref{eq:cauchy_theta}.
Further notice that  the functional framework that we employ allow us to prove our limit theorems (see Theorems \ref{theorem:main_hydrodynamics} and \ref{theorem:main_hydrostatic} below) for  initial profiles which are generalized functions in $\mathscr S'$.

\subsection{Main results} In this section,  we present, for all $\tune \geq 0$,	 the two results concerning the weak law of large numbers -- the hydrodynamic and hydrostatic limits --  for the empirical density fields $\left\{\mathscr X^N_\cdot: N \in \N \right\} \subset \mathcal D([0,\infty),\mathscr S')$, given  by
\begin{equation}\label{eq:density_fieldsN}
	\mathscr X^N_t \coloneqq \frac{1}{N}\sum_{x \in \VN} \delta_{\frac{x}{N}}\, \eta^N_t(x)=  \frac{1}{N} \sum_{x \in \VN} \delta_{\frac{x}{N}}\, D_N(x,\eta^N_t)\, \alpha\ ,\quad t \geq 0\ ,
\end{equation}
where, for all $N \in \N$,  $\left\{\eta^N_t: t \geq 0\right\}$ is the open $\SIP$ with some  prescribed initial distribution	 $\mu^N$ (we refer to the statements of the two main theorems below for further details). 

Before the statement of the hydrodynamics result, we need a further definition.
\begin{definition}[\textsc{particle distributions associated with a profile}]\label{definition:associated}
	For all $\tune \geq 0$, let $\{\mu^N: N \in \N\}$ be a sequence of Borel probability measures on $\left\{\mathcal X_N: N \in \N\right\}$ and let $\scale_0\, \alpha \in \mathscr S'$. We say that the family $\{\mu^N: N \in \N \}$ is \emph{associated with the  profile} $\scale_0\, \alpha \in \mathscr S'$ if, for all $G \in \mathscr S$ and for all $\delta > 0$, 
	\begin{align*}
		\mu^N\left(\left\{\eta \in \mathcal X_N: \left|\frac{1}{N}\sum_{x\in \VN}G(\tfrac{x}{N})\, D_N(x,\eta)\, \alpha - \langle G, \scale_0\,\alpha	\rangle\right|\ >\ \delta \right\}\right) \underset{N\to \infty}\longrightarrow 0\ .
	\end{align*} 
\end{definition}


\begin{theorem}[\textsc{hydrodynamic limit}]\label{theorem:main_hydrodynamics}
	For all $\tune \geq 0$, let $\left\{\mu^N: N \in \N \right\}$ be a family of Borel probability measures on $\left\{\mathcal X_N: N \in \N \right\}$  and let $\scale_0\, \alpha \in \mathscr S'$. We assume that:
	\begin{enumerate}[label={\normalfont (\alph*)}, ref={\normalfont (\alph*)}]
		\item \label{it:assumption_associated}The family $\{\mu^N: N \in \N \}$ is associated with the profile $\scale_0\, \alpha \in \mathscr S'$ (see Definition \ref{definition:associated}).
		\item \label{it:assumption_bounds}	There exists a constant $\kappa > 0$ such that, for all $N \in \N$ and $x, y \in \VNh$, the following upper bounds hold:
		\begin{align}\label{eq:upper_bound_first_second_moments}
			E_{\mu^N}\left[D_N(x,\eta) \right] \leq \kappa\qquad \text{and}\qquad
			E_{\mu^N}\left[D_N(x,y,\eta)\right] \leq \kappa^2\ .	
		\end{align}
	\end{enumerate}
	Let us consider the empirical density fields $\left\{\mathscr X^N_\cdot: N \in \N \right\} \subset \mathcal D([0,\infty),\mathscr S')$  defined as in \eqref{eq:density_fieldsN} in terms of the open  symmetric inclusion processes   initialized according to $\left\{\mu^N: N \in \N \right\}$, i.e., 
	\begin{equation*}
		\eta^N_0 \sim \mu^N\ ,\quad N \in \N\ .
	\end{equation*}
	Then, the following weak convergence in $\mathcal D([0,\infty),\mathscr S')$ (see also \eqref{eq:weak_convergence2} below)
	\begin{align}\label{eq:weak_convergence_hydrodynamic_limit}
		\left\{\mathscr X^N_t: t \geq 0 \right\} \underset{N\to \infty}\Longrightarrow \left\{\scale(t)\,\alpha: t \geq 0 \right\}
	\end{align}
	holds, where
	$\left\{\scale(t)\,\alpha: t \geq 0 \right\} \in \mathcal C([0,\infty),\mathscr S')
	$ is the unique  solution in $\mathscr S'$  of 
	\begin{itemize}
		\item the Dirichlet problem  if $\tune <1$,
		\item the Robin problem  if $\tune =1$,
		\item  the Neumann problem  if $\tune > 1$,
	\end{itemize}  starting from $\scale_0\, \alpha \in \mathscr S'$.  
\end{theorem}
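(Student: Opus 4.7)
My plan is to follow the classical martingale approach, exploiting the duality relations \eqref{eq:duality_one} and \eqref{eq:duality_two} to close, respectively, the first- and second-moment equations for the empirical field at the level of one and two \emph{absorbing} dual particles. The recurrent trick is to \emph{center} $D_N(\cdot, \eta^N_s)$ by the discrete $A^N$-harmonic profile $h^N : \VNh\to \R$ with boundary data $h^N(0)=\scale_L$, $h^N(N)=\scale_R$, which converges uniformly to $h_{\scale_L,\scale_R}$ in \eqref{eq:harmonic_dirichlet}--\eqref{eq:harmonic_neumann}. The centered function vanishes at $\{0, N\}$: this is precisely what activates the symmetry \eqref{eq:symmetry_AN} in the discrete integration by parts and dispenses with replacement lemmas at the boundary.

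For each $G \in \mathscr S$, Dynkin's formula gives
\begin{equation*}
\langle G, \mathscr X^N_t\rangle = \langle G, \mathscr X^N_0\rangle + \int_0^t \mathcal L^N \langle G, \mathscr X^N_s\rangle\, \dd s + M^{N,G}_t\ ,
\end{equation*}
and duality \eqref{eq:duality_one} yields
\begin{equation*}
\mathcal L^N \langle G, \mathscr X^N_s\rangle(\eta) = \tfrac{\alpha}{N}\sum_{x \in \VN} G(x/N)\, A^N D_N(\cdot,\eta)(x)\ .
\end{equation*}
Substituting $D_N(\cdot, \eta)$ by $D_N(\cdot,\eta) - h^N$ (using $A^N h^N\equiv 0$ on $\VN$) and applying the symmetry \eqref{eq:symmetry_AN} transfers $A^N$ onto $G$; the regime-specific boundary conditions \eqref{eq:boundary_conditions_dirichlet}--\eqref{eq:boundary_conditions_neumann} built into $\mathscr S$ are exactly what make the residual boundary terms vanish. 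A Taylor expansion then gives $A^N G(x/N) = \alpha\, G''(x/N) + o(1)$ uniformly on $\VN$, so the drift converges to $\int_0^t \langle \mathcal A G,\ \mathscr X^N_s - h_{\scale_L,\scale_R}\alpha\, \dd u\rangle\, \dd s + o_N(1)$, which is precisely the centered weak formulation of Definition \ref{definition:solutions}.

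To kill $M^{N,G}$, I would compute its predictable quadratic variation via the carr\'{e} du champ $\Gamma^N(F) = \mathcal L^N F^2 - 2F\, \mathcal L^N F$. Standard bookkeeping bounds $\langle M^{N,G}\rangle_t$ by $\tfrac{C_G}{N}\int_0^t \sum_{x,y} E_{\mu^N}[D_N(x,y,\eta^N_s)]\, \dd s$; second-order duality \eqref{eq:duality_two} and the tower property yield $E_{\mu^N}[D_N(x,y,\eta^N_s)] = \widehat \E^N_{(x,y)}\bigl[E_{\mu^N}[D_N(X^1_s, X^2_s, \eta)]\bigr] \leq \kappa^2$ by assumption \ref{it:assumption_bounds}, so $\E[\langle M^{N,G}\rangle_t] = O(1/N)$. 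Tightness of $\{\mathscr X^N_\cdot\}$ in $\mathcal D([0,\infty), \mathscr S')$ then follows from Mitoma's theorem, reducing to tightness of $\{\langle G, \mathscr X^N_\cdot\rangle\}_N$ for each $G \in \mathscr S$, which is granted by Aldous' criterion applied to the martingale-drift decomposition (the first-moment bound from \ref{it:assumption_bounds} is propagated similarly via \eqref{eq:duality_one}). Any limit point is supported on $\mathcal C([0,\infty),\mathscr S')$, satisfies the weak form with initial datum $\scale_0\,\alpha$ by assumption \ref{it:assumption_associated}, and is uniquely identified by Proposition \ref{proposition:well_posedness}.

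The main obstacle is the verification, uniformly in $N$ and in each of the three regimes $\tune \in [0,1)$, $\tune = 1$, $\tune \in (1,\infty)$, that the boundary remainders from the discrete integration by parts vanish in the limit. Here the regime-dependent boundary conditions \eqref{eq:boundary_conditions_dirichlet}--\eqref{eq:boundary_conditions_neumann} built into $\mathscr S$ become essential: when $\tune<1$, the fast reservoir rate $N^{2-\tune}\gg N^2$ is tamed by the fact that $D_N-h^N$ vanishes at $\{0,N\}$; when $\tune>1$, the reservoir contribution is subcritical and the vanishing odd derivatives of $G$ absorb the bulk mismatch; when $\tune=1$, the Robin relation between $G$ and its derivatives at the endpoints exactly balances the two effects. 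Tracking these cancellations in a unified framework is the technical heart of the argument, and is precisely what the test function space constructed in Section \ref{section:function_spaces} is designed to deliver.
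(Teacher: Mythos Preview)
Your outline captures the paper's approach faithfully for the regimes $\tune\geq 1$: center by the discrete $A^N$-harmonic profile, use Dynkin, integrate by parts via \eqref{eq:computations_laplacian}, and kill the resulting boundary terms using the Robin/Neumann conditions \eqref{eq:boundary_conditions_robin}--\eqref{eq:boundary_conditions_neumann} on $G$. The martingale and tightness parts are also essentially as in the paper (though your bound on $\langle M^{N,G}\rangle_t$ is written too crudely: the sum is over nearest-neighbor pairs, not all $(x,y)$, and each summand carries a squared gradient of $G$ contributing an extra $N^{-2}$; otherwise you would get $O(N)$, not $O(1/N)$).

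There is, however, a genuine gap in the Dirichlet regime $\tune<1$. After centering by $h^N$, the discrete integration by parts \eqref{eq:computations_laplacian} applied to $f=D_N(\cdot,\eta^N_s)-h^N$ (which does vanish at $\{0,N\}$) produces, at the left end, the term
\[
\bigl(\alpha\,\nabla_N^+G(0)-N^{1-\tune}\alpha_L\,G(\tfrac1N)\bigr)\bigl(D_N(1,\eta^N_s)-h^N(1)\bigr)\alpha\,.
\]
The Dirichlet condition $G(0)=0$ forces $N^{1-\tune}G(\tfrac1N)\approx N^{-\tune}\partial_u^+G(0)\to 0$, so the fast-reservoir piece is indeed tamed, as you say. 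But $\alpha\,\nabla_N^+G(0)\to \alpha\,\partial_u^+G(0)$, which is \emph{not} required to vanish for $G\in\mathscr S$ when $\tune<1$ (only even derivatives are). Hence the boundary remainder is $O(1)$, not $o(1)$, and you cannot identify the drift with $\langle \mathcal A G,\mathscr Z^N_s\rangle$. Equivalently, your claim that ``$A^N G(x/N)=\alpha G''(x/N)+o(1)$ uniformly on $\VN$'' fails at $x=1,N-1$: there $\mathcal A^N G(\tfrac1N)-\alpha\Delta_N G(\tfrac1N)\sim N\alpha\,\partial_u^+G(0)$.

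This is precisely why the paper changes strategy for $\tune\in[0,1)$ (Section~\ref{section:proof_HDL_dirichlet}): it introduces, for each $G\in\mathscr S$, a corrected discrete test function $G_N$ (Lemma~\ref{lemma:approximationG}) with $G_N(0)=G_N(1)=0$ and such that $\mathcal A^N G_N=\alpha\Delta_N G$ exactly on $\VN$; one first proves convergence of finite-dimensional distributions for $\langle G_N,\mathscr Z^N_\cdot\rangle$ (where no boundary term appears) and then uses $\sup_x|G_N(\tfrac xN)-G(\tfrac xN)|\to 0$ to transfer to $\langle G,\mathscr Z^N_\cdot\rangle$. Tightness is then argued separately, where the boundedness (not vanishing) of the above boundary term suffices. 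Your proposal would yield tightness but not the identification of the limit in the Dirichlet case without this additional device.
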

We observe that, if $\scale_0\, \alpha \in \mathscr S'$ is absolutely continuous with respect to Lebesgue with non-negative continuous density $\theta_0\, \alpha \in \mathcal C([0,1])$, then the local Gibbs measures $\left\{\mu^N: N \in \N \right\}$ (see Section \ref{section:stationary_measures}) given by 
\begin{equation*}
	\mu^N= \otimes_{x \in \VN} \nu_{\theta_0(\frac{x}{N})}\ ,\quad N \in \N\ ,	
\end{equation*}
satisfy both assumptions  \ref{it:assumption_associated} with the profile $\scale_0\, \alpha$ and \ref{it:assumption_bounds}  with $\kappa\coloneqq \max\left\{\scale_L,\scale_R, \sup_{u \in [0,1]} \theta_0(u) \right\}$.  

Furthermore, because the transitions of the underlying open particle system consist in only one-particle moves, for all $\tune \geq 0$, $G \in \mathscr S$ and $\delta > 0$,  we have
\begin{align}\label{eq:vanishing_jumps}
	\Pr^N_{\mu^N}\left(\sup_{t \geq 0} \left|\langle G, \mathscr X^N_t\rangle - \langle G, \mathscr X^N_{t^-}\rangle \right|>\delta \right)\underset{N\to \infty}\longrightarrow 0\ ,
\end{align}
from which it follows (\cite[Theorem 13.4]{billingsley_convergence_1999}) that any  limiting point $\mathscr X_\cdot$ of the sequence $\left\{\mathscr X^N_\cdot: N \in \N \right\}$ belongs to $\mathcal C([0,\infty),\mathscr S')$. The result in Proposition \ref{proposition:well_posedness} and assumption \ref{it:assumption_associated} will, then, univocally characterize the deterministic limiting process.	 

The weak convergence in \eqref{eq:weak_convergence_hydrodynamic_limit}, which boils down to show tightness and convergence of the finite dimensional distributions of the sequence $\left\{\mathscr X^N_\cdot: N \in \N  \right\}$ (see, e.g., \cite[Proposition 5.2]{mitoma_tightness_1983}), because of the considerations in Section \ref{section:remark_topologies} below and because the limiting process is deterministic and continuous, may be equivalently restated as follows (cf.\ \eqref{eq:convergence_probability} below, as well as,  e.g., \cite{kallianpur_xiong_1995} for further details): for all $T \geq 0$, $G \in \mathscr S$ and $\delta >0$, 
\begin{align}\label{eq:weak_convergence2}
	\Pr^N_{\mu^N}\left(\sup_{t \in [0,T]}\left| \langle G,\mathscr X^N_t\rangle - \langle G, \scale(t)\, \alpha\rangle\right|>\delta \right)\underset{N\to \infty}\longrightarrow 0\ .
\end{align}

The general strategy we follow to prove Theorem \ref{theorem:main_hydrodynamics} is to, first,   provide a decomposition for the empirical density fields analogous to that in \eqref{eq:decomposition} in which we center the  fields  with respect to  the stationary part, i.e., write, for all $N \in \N$ and $t \geq 0$,
\begin{align}\label{eq:decomposition_N}
	\mathscr X^N_t = \mathscr H^N_{\scale_L,\scale_R}\, \alpha + \mathscr  Z^N_t
\end{align}
for $\mathscr H^N_{\scale_L \scale_R}  \in \mathscr S'$ deterministic and $\left\{\mathscr Z^N_t: t \geq 0 \right\} \in \mathcal D([0,\infty),\mathscr S')$ random; then, show that  
\begin{align}\label{eq:convergence_stationary_part}
	\langle G, \mathscr H^N_{\scale_L,\scale_R}\,\alpha\rangle \underset{N\to \infty}\longrightarrow \langle G, h_{\scale_L,\scale_R}\, \alpha \,  \dd u\rangle
\end{align}
and
\begin{align}\label{eq:convergence_dynamic_part}
	\Pr^N_{\mu^N}\left(\sup_{t \in [0,T]}\left|\langle G, \mathscr Z^N_t\rangle - \langle G, \left(\scale(t)-h_{\scale_L,\scale_R}\dd u\right)\alpha\rangle \right| > \delta \right)\underset{N\to \infty}\longrightarrow 0
\end{align}
hold for all  $T \geq 0$,  $G \in \mathscr S$ and $\delta > 0$.

\

In the following theorem, we present our second main result and recall that, for all $\tune \geq 0$ and $N \in \N$, 
$\mu^N_{\scale_L,\scale_R}$ denotes the unique stationary probability measure  for the open $\SIP$ $\left\{\eta^N_t: t\geq 0 \right\}$ (see Section \ref{section:stationary_measures}).

\begin{theorem}[\textsc{hydrostatic limit}]\label{theorem:main_hydrostatic}
	For all $\tune \geq 0$, the empirical density fields  -- given in \eqref{eq:density_fieldsN} and defined in terms of the open $\SIP$ initialized according to $\left\{\mu^N_{\scale_L,\scale_R}: N \in \N\right\}$ --   weakly converge in $\mathcal D([0,\infty),\mathscr S')$ to
	$
	\left\{\scale(t)\, \alpha: t \geq 0 \right\} \in \mathcal C([0,\infty),\mathscr S')$,	
	where
	$
	\scale(t) \equiv h_{\scale_L,\scale_R}\, \dd u \in \mathscr S'
	$ and $h_{\scale_L,\scale_R} \in \mathcal C^\infty([0,1])$ is the unique stationary solution
	of
	\begin{itemize}
		\item the Dirichlet problem  as given in \eqref{eq:harmonic_dirichlet} if $\tune <1$,
		\item the Robin problem  as given in \eqref{eq:harmonic_robin} if $\tune =1$,
		\item  the Neumann problem   as given in \eqref{eq:harmonic_neumann} if $\tune > 1$.
	\end{itemize}
\end{theorem}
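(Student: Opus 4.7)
\subsection*{Proof plan for Theorem \ref{theorem:main_hydrostatic}}

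\textbf{Overall strategy.} The plan is to reduce the hydrostatic statement to an application of the hydrodynamic limit (Theorem \ref{theorem:main_hydrodynamics}) with initial distribution $\mu^N = \mu^N_{\scale_L,\scale_R}$. If we can verify assumptions \ref{it:assumption_associated} (association with the profile $h_{\scale_L,\scale_R}\,\alpha \in \mathscr S'$) and \ref{it:assumption_bounds} (uniform moment bounds), then Theorem \ref{theorem:main_hydrodynamics} yields weak convergence of $\{\mathscr X_\cdot^N\}$ to the unique solution $\{\scale(t)\,\alpha\}$ in $\mathscr S'$ of the relevant boundary value problem started from $h_{\scale_L,\scale_R}\,\alpha$. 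Since $h_{\scale_L,\scale_R}$ given in \eqref{eq:harmonic_dirichlet}--\eqref{eq:harmonic_neumann} is precisely the stationary solution to the corresponding Cauchy problem \eqref{eq:cauchy_theta} (in the respective regime), uniqueness gives $\scale(t)\equiv h_{\scale_L,\scale_R}\dd u$ for all $t\geq 0$, which is the claim.

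\textbf{Step 1: uniform moment bounds.} First I would check assumption \ref{it:assumption_bounds}. By the single-particle duality relation \eqref{eq:duality_one} and stationarity, $x\mapsto E_{\mu^N_{\scale_L,\scale_R}}[D_N(x,\eta)]$ is the (unique) bounded solution on $\VNh$ of $A^N u=0$ with boundary values $\scale_L$ at $0$ and $\scale_R$ at $N$, hence lies in $[\min(\scale_L,\scale_R),\max(\scale_L,\scale_R)]$ by the maximum principle for $A^N$. Analogously, $(x,y)\mapsto E_{\mu^N_{\scale_L,\scale_R}}[D_N(x,y,\eta)]$ is, by \eqref{eq:duality_two}, the unique bounded stationary solution of $B^N v=0$ with boundary values $\mathcal D_N(\delta_x+\delta_y,\cdot)$ evaluated at absorbed configurations, all of which are products of $\scale_L$'s and $\scale_R$'s; again the maximum principle applied to $B^N$ yields $v\le \max(\scale_L,\scale_R)^2$. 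So \ref{it:assumption_bounds} holds with $\kappa=\max(\scale_L,\scale_R)$.

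\textbf{Step 2: association with $h_{\scale_L,\scale_R}\,\alpha$.} This is the crux. Fix $G\in\mathscr S$ and write
\[
Y_N \coloneqq \tfrac{1}{N}\sum_{x\in\VN} G(\tfrac{x}{N})\,D_N(x,\eta)\,\alpha\, .
\]
I would establish the convergence in probability $Y_N \to \langle G, h_{\scale_L,\scale_R}\,\alpha\rangle$ under $\mu^N_{\scale_L,\scale_R}$ via the first and second moment method. For the mean, Step 1 identifies $h^N(x)\coloneqq E_{\mu^N_{\scale_L,\scale_R}}[D_N(x,\eta)]$ as the discrete harmonic function for $A^N$ with boundary data $\scale_L,\scale_R$; solving this two-point boundary problem explicitly on $\VNh$ (or using that $h^N$ converges in the appropriate scaling to the continuous harmonic function with the corresponding boundary/Robin condition encoded in $A^N$), I obtain $h^N(x)\to h_{\scale_L,\scale_R}(x/N)$ uniformly, whence $E_{\mu^N_{\scale_L,\scale_R}}[Y_N]\to \langle G,h_{\scale_L,\scale_R}\,\alpha\rangle$ by Riemann sum approximation.

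\textbf{Step 3: variance via reduction to a single random walk.} The harder part is $\mathrm{Var}_{\mu^N_{\scale_L,\scale_R}}(Y_N)\to 0$. Expanding the variance, I need to control
\[
\varphi_N(x,y)\coloneqq E_{\mu^N_{\scale_L,\scale_R}}\bigl[D_N(x,y,\eta)\bigr] - E_{\mu^N_{\scale_L,\scale_R}}\bigl[D_N(x,\eta)\bigr]\,E_{\mu^N_{\scale_L,\scale_R}}\bigl[D_N(y,\eta)\bigr]
\]
for $x,y\in\VN$ and show an $L^1(\VN\times\VN)$ bound $\tfrac{1}{N^2}\sum_{x,y}|\varphi_N(x,y)|\to 0$, which then controls the variance uniformly in bounded $G$. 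Here I would invoke the approach advertised in the introduction: using two-particle duality \eqref{eq:duality_two} and stationarity, $\varphi_N$ is itself a stationary correlation, governed by $B^N$ with boundary data that is \emph{exactly} the product $D_N(x,\eta)D_N(y,\eta)$ evaluated at absorption (so that the boundary contribution to $\varphi_N$ vanishes identically). Via the hierarchical lookdown / first-second class particle construction, the joint law of the two inclusion particles under $B^N$ is coupled to a \emph{single} one-dimensional random walk on $\VNh$ with absorption at $\{0,N\}$, the second class particle tracking the discrepancy; the quantity $\varphi_N$ is then controlled by the probability that this auxiliary walk has not yet been absorbed, a one-particle quantity whose $L^1$ decay is the content of the Appendix \ref{appendix:RW} random-walk estimate. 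Combining this decay with the uniform bound from Step 1 gives $\mathrm{Var}_{\mu^N_{\scale_L,\scale_R}}(Y_N)\to 0$.

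\textbf{Step 4: conclusion.} Chebyshev together with Steps 2--3 gives association of $\{\mu^N_{\scale_L,\scale_R}\}$ with $h_{\scale_L,\scale_R}\,\alpha$ in the sense of Definition \ref{definition:associated}. Theorem \ref{theorem:main_hydrodynamics} then applies and yields $\mathscr X^N_\cdot \Rightarrow \scale(\cdot)\,\alpha$ in $\mathcal D([0,\infty),\mathscr S')$, where $\scale(\cdot)\,\alpha$ is the unique solution in $\mathscr S'$ starting from $h_{\scale_L,\scale_R}\,\alpha$. Since $h_{\scale_L,\scale_R}$ is stationary for the relevant boundary value problem, Proposition \ref{proposition:well_posedness} gives $\scale(t)\equiv h_{\scale_L,\scale_R}\,\dd u$ for all $t\ge 0$, proving the theorem. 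The main obstacle, as anticipated, is the two-point correlation decay in Step 3; everything else either follows from maximum principles for $A^N$ and $B^N$ or is an immediate consequence of Theorem \ref{theorem:main_hydrodynamics}.
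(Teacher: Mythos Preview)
Your proposal is correct and follows essentially the same route as the paper: verify assumptions \ref{it:assumption_bounds} and \ref{it:assumption_associated} for $\mu^N_{\scale_L,\scale_R}$ (the former via the maximum principle for $A^N$ and $B^N$, the latter via a second-moment computation reducing the two-point stationary correlation to a single absorbed random walk through the first--second class lookdown coupling and the estimate of Appendix~\ref{appendix:RW}), then invoke Theorem~\ref{theorem:main_hydrodynamics} and uniqueness. The only point you gloss over is the explicit integral representation \eqref{eq:explicit_form} of $\varphi_N$ (taken from \cite{floreani_boundary2020}), which is what actually links the stationary correlation to the two-particle semigroup $S^N_s g_N$ before the lookdown reduction; once that formula is in hand, your Step~3 proceeds exactly as in the paper.
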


\section{Proofs}\label{section:proofs}
Let us now  prove Theorems \ref{theorem:main_hydrodynamics} and \ref{theorem:main_hydrostatic}.
Before digging into the proofs, though, we start with some general considerations. 

\

Because of duality \eqref{eq:duality_relation} and  because the dual process defined in Section \ref{section:dual_process} conserves the total number of particles, in view of assumption \ref{it:assumption_bounds} in Theorem \ref{theorem:main_hydrodynamics}, we have
\begin{align}\label{eq:bound_second_order_polynomials}
	\sup_{N \in \N}\sup_{t \geq 0}\sup_{x \in \VNh} \E^N_{\mu^N}\left[D_N(x,\eta^N_t) \right]\leq \kappa\quad \text{and}\quad
	\sup_{N \in \N}\sup_{t \geq 0}\sup_{x, y \in \VNh} \E^N_{\mu^N}\left[D_N(x, y,\eta^N_t) \right]\leq \kappa^2\ .
\end{align}

\

We recall the definition of the empirical density fields in \eqref{eq:density_fieldsN}. In view of Dynkin's formula, for all $\tune \geq 0$, $N \in \N$, $t \geq 0$ and $G \in \mathscr S$, we have
\begin{align}\label{eq:dynkin_formula}
	\langle G,\mathscr X^N_t\rangle = \langle G, \mathscr X^N_0\rangle + \int_0^t \mathcal L^N \langle G,\mathscr X^N_s\rangle\, \dd s  + \langle G,\mathscr M^N_t\rangle\ ,
\end{align}
where $\left\{\mathscr M^N_t: t \geq 0 \right\} \subset \mathcal D([0,\infty),\mathscr S')$ is a martingale (with respect to its natural filtration) with predictable quadratic variation given, for all $t \geq 0$, by
\begin{align*}
	\int_0^t \left( \mathcal L^N \left(\langle G, \mathscr X^N_s \rangle\right)^2 - 2 \langle G,\mathscr X^N_s\rangle \mathcal L^N \langle G, \mathscr X^N_s\rangle\right) \dd s\ .	
\end{align*}

\

We recall the definition of the inner product $\llangle \cdot,\cdot \rrangle_N$ from \eqref{eq:inner_product_1} and of the generator $A^N$ in \eqref{eq:generator_A}. In view of the duality relations \eqref{eq:duality_relation} and, in particular, \eqref{eq:duality_one} and \eqref{eq:duality_two}, we have
\begin{align}\label{eq:drift}
	\mathcal L^N \langle G, \mathscr X^N_s\rangle =&\ \frac{1}{N}\sum_{x \in \VN} G(\tfrac{x}{N})\, A^N D_N(\cdot,\eta^N_s)(x)\, \alpha = \llangle G(\tfrac{\cdot}{N}),A^ND_N(\cdot,\eta^N_s)\rrangle_N
\end{align} 
and
\begin{align}\label{eq:predictable_QV}\nonumber
	&\int_0^t \left(\mathcal L^N \left(\langle G, \mathscr X^N_s\rangle \right)^2 - 2 \langle G, \mathscr X^N_s \rangle \mathcal L^N \langle G, \mathscr X^N_s\rangle\right) \dd s\\
	=&\ \frac{1}{N^2} \int_0^t \left\{\begin{array}{c} \sum_{x \in \VN\setminus \{N-1\}} N^2 \alpha \left(G(\tfrac{x+1}{N})-G(\tfrac{x}{N}) \right)^2 \mathcal V^N_{\{x,x+1\}}(\eta^N_s)\, \alpha\\[.2cm]
		+\, N^{2-\tune} \alpha_L \left(G(\tfrac{1}{N}) \right)^2 \mathcal V^N_{\{0,1\}}(\eta^N_s)\, \alpha\\[.2cm]
		+\, N^{2-\tune} \alpha_R \left(G(\tfrac{N-1}{N}) \right)^2 \mathcal V^N_{\{N-1,N\}}(\eta^N_s)\, \alpha
	\end{array} \right\}\dd s\ ,
\end{align}
where, for all $\eta \in \mathcal X_N$ and $x, y \in \VNh$ with $x\neq y$, 
\begin{align*}
	\mathcal V^N_{\{x,y\}}(\eta)\coloneqq&\ D_N(x,\eta) + D_N(y,\eta) + 2 D_N(x,y,\eta)\ .
\end{align*}
Going back to \eqref{eq:drift}, we note that, for all $f: \VNh\to \R$, two applications of integration by parts yield
\begin{align}\label{eq:computations_laplacian}
	\llangle G(\tfrac{\cdot}{N}), A^Nf\rrangle_N =&\	\frac{1}{N} \sum_{x \in \VN} \alpha\, \Delta_N G(\tfrac{x}{N})\, f(x)\, \alpha\\
	\nonumber
	+&\   \alpha\, \nabla_N^+ G(0)\, f(1)\,\alpha + \alpha\, \nabla_N^- G(1)\, 	f(N-1)\,\alpha\\[.2cm] 
	+&\  N^{1-\tune}\alpha_L\, G(\tfrac{1}{N})\left(f(0)-f(1) \right)\alpha +  N^{1-\tune}\alpha_R\, G(\tfrac{N-1}{N})\left(f(N)-f(N-1) \right) \alpha\ ,
	\nonumber
\end{align}
where $\Delta_N$ denotes the discrete Laplacian with mesh size $\frac{1}{N}$, namely
\begin{align*}
	\Delta_N G(\tfrac{x}{N})\coloneqq N^2 \left(G(\tfrac{x+1}{N})+G(\tfrac{x-1}{N})-2 G(\tfrac{x}{N}) \right)\ ,\quad x \in \VN\ ,
\end{align*}
and $\nabla^\pm_N$ the corresponding discrete gradients:
\begin{align*}
	\nabla^+_N G(\tfrac{x}{N})\coloneqq&\ N\left(G(\tfrac{x+1}{N})-G(\tfrac{x}{N})\right)\ ,\quad x \in \VNh \setminus\{N\}\\
	\nabla^-_N G(\tfrac{x}{N})\coloneqq&\ N\left(G(\tfrac{x-1}{N})-G(\tfrac{x}{N})\right)\ ,\quad x \in \VNh \setminus\{0\}\ .
\end{align*}
We recall from \eqref{eq:symmetry_AN} that if, additionally, $G(0)=G(1)=0$ and $f(0)=f(N)=0$, then
\begin{align}\label{eq:symmetry_one}
	\llangle G(\tfrac{\cdot}{N}), A^Nf\rrangle_N = \llangle \mathcal A^N G(\tfrac{\cdot}{N}), f\rrangle_N\ , 
\end{align}
where
\begin{equation}\label{eq:mathcalA^N}
	\mathcal A^N G(\tfrac{x}{N})\coloneqq A^N G(\tfrac{\cdot}{N})(x)\ ,\quad x \in \VNh\ .
\end{equation}

Having in mind the decomposition in \eqref{eq:decomposition_N} of the empirical density fields, we introduce, for all $\tune \geq 0$ and $N \in \N$, the following function
\begin{align}\label{eq:definitionhN}
	h^N_{\scale_L,\scale_R}(x)\coloneqq E_{\mu^N_{\scale_L,\scale_R}}\left[D_N(x,\eta) \right] = \lim_{t\to \infty}\E_{\mu^N}\left[D_N(x,\eta^N_t) \right]\ ,\quad x \in \VNh\ ,
\end{align}
for any probability measure $\mu^N$ on $\mathcal X_N$, which, by stationarity of $\mu^N_{\scale_L,\scale_R}$ and duality \eqref{eq:duality_one}, solves the following boundary value problem:
\begin{align}\label{eq:boundary_value_discrete_one}
	\left\{\begin{array}{rcll}
		A^N f(x) &=& 0\ , &x \in \VNh\\[.2cm]
		f(0) &=& \scale_L\\[.2cm]
		f(N) &=& \scale_R\ .
	\end{array}\right.
\end{align}
Notice that, as we will show in Lemma \ref{lemma:convergence_static_part} below, the functions $h^N_{\scale_L,\scale_R}$ are  to be considered as  discrete approximations of the stationary solutions of the hydrodynamic equations.
Moreover, by defining
\begin{equation}\label{eq:bigHN}\mathscr 	H^N_{\scale_L,\scale_R} \alpha\coloneqq \frac{1}{N}\sum_{x \in  \VN} \delta_{\frac{x}{N}}\, h^N_{\scale_L,\scale_R}(x)\,\alpha \ ,
\end{equation}   \eqref{eq:decomposition_N} writes, for all $\tune \geq 0$, $N \in \N$, $t \geq 0$ and $G \in \mathscr S$,   as 
\begin{align}\label{eq:decomposition_N2}
	\nonumber
	\langle G, \mathscr X^N_t\rangle =&\ \langle G, \mathscr H^N_{\scale_L,\scale_R}\, \alpha\rangle  + \langle G, \mathscr Z^N_t\rangle\\
	\nonumber
	=&\ \llangle G(\tfrac{\cdot}{N}), h^N_{\scale_L,\scale_R}\rrangle_N + \frac{1}{N}\sum_{x \in \VN} G(\tfrac{x}{N})\left(\tfrac{\eta^N_t(x)}{\alpha}-h^N_{\scale_L,\scale_R}(x) \right)\alpha\\
	=&\  \llangle G(\tfrac{\cdot}{N}), h^N_{\scale_L,\scale_R}\rrangle_N + \frac{1}{N}\sum_{x \in \VN} G(\tfrac{x}{N})\left(D_N(x,\eta^N_t)-E_{\mu^N_{\scale_L,\scale_R}}\left[D_N(x,\eta) \right] \right)\alpha\  .
\end{align}
In our one-dimensional context, the explicit form of the function $h^N_{\scale_L,\scale_R}$ is well-known and given by
\begin{align}\label{eq:harmonic_discrete}
	h^N_{\scale_L,\scale_R}(x)= \scale_L + p_N(x) \left(\scale_R-\scale_L \right)\ ,\quad x \in \VNh\ ,
\end{align}
with
\begin{align*}
	p_N(x)\ \coloneqq\ \frac{\1_{\{x>0\}}}{Z^N} \frac{1}{N}\left(\frac{1}{N^{-\tune}\alpha_L \alpha}+ \frac{x-1}{\alpha^2} 	+\1_{\{x=N\}}\frac{1}{N^{-\tune}\alpha_R \alpha}\right)\ ,
\end{align*}
where
\begin{align*}
	Z^N\ \coloneqq\ \frac{1}{N}\left(\frac{1}{N^{-\tune}\alpha_L \alpha}+ \frac{N-1}{\alpha^2}+\frac{1}{N^{-\tune}\alpha_R \alpha}\right)\ .
\end{align*}
In particular, $h^N_{\scale_L,\scale_R}(0)=\scale_L$ and $h^N_{\scale_L,\scale_R}(N)=\scale_R$.
Hence, in order to prove both Theorems \ref{theorem:main_hydrodynamics} and \ref{theorem:main_hydrostatic}, as a first step, we prove the convergence in \eqref{eq:convergence_stationary_part} in the following lemma. 
\begin{lemma}[\textsc{convergence of the stationary part}]\label{lemma:convergence_static_part}
	Let us recall, for all $\tune \geq 0$, the definitions of stationary solutions $h_{\scale_L,\scale_R}$ in \eqref{eq:harmonic_dirichlet}, \eqref{eq:harmonic_robin} and \eqref{eq:harmonic_neumann} as well as the definition of $\mathscr H^N_{\scale_L,\scale_R}$ in \eqref{eq:bigHN}. Then, for all $\tune \geq 0$ and $G \in \mathscr S$, we have 
	\begin{align*}
		\langle G, \mathscr H^N_{\scale_L,\scale_R}\, \alpha\rangle\underset{N\to \infty}\longrightarrow  \langle G, h_{\scale_L,\scale_R}\, \alpha\, \dd u  \rangle= \int_{[0,1]} G(u)\, h_{\scale_L,\scale_R}(u)\, \alpha\,	 \dd u\ .
	\end{align*}
\end{lemma}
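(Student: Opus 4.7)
The plan is to recognize this lemma as essentially a Riemann-sum convergence statement, combined with the convergence of the discrete $A^N$-harmonic function $h^N_{\scale_L,\scale_R}$ to the continuum stationary profile $h_{\scale_L,\scale_R}$ in each of the three boundary-condition regimes. I would start by rewriting the pairing as
\begin{equation*}
\langle G,\mathscr H^N_{\scale_L,\scale_R}\,\alpha\rangle = \frac{\alpha}{N}\sum_{x\in\VN} G(\tfrac{x}{N})\, h^N_{\scale_L,\scale_R}(x)\ ,
\end{equation*}
and then invoke the closed form \eqref{eq:harmonic_discrete}, which reduces the analysis to the asymptotics of $p_N(\lfloor uN\rfloor)$ as $N\to\infty$. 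Since $p_N$ is $A^N$-harmonic on $\VNh$ with boundary values $p_N(0)=0$ and $p_N(N)=1$, the discrete maximum principle gives $p_N(x)\in[0,1]$ for every $x$, so the sequence $\{h^N_{\scale_L,\scale_R}\}_N$ is uniformly bounded by $\max(\scale_L,\scale_R)$.

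Next, I would identify the pointwise limit of $p_N(\lfloor uN\rfloor)$ for $u\in(0,1)$ by factoring out the dominant powers of $N$ in the numerator and denominator defining $p_N(x)$. For $\tune<1$, both reservoir contributions, of order $N^{\tune-1}$, are negligible compared to the bulk term of order $1$, and one obtains $p_N(\lfloor uN\rfloor)\to u$, recovering the Dirichlet profile \eqref{eq:harmonic_dirichlet}. For $\tune=1$ all three contributions remain of order $1$ and their exact balance yields the Robin profile \eqref{eq:harmonic_robin}. For $\tune>1$ the reservoir terms of order $N^{\tune-1}$ dominate, and $p_N(\lfloor uN\rfloor)$ converges to the constant $\alpha_R/(\alpha_L+\alpha_R)$, giving the Neumann profile \eqref{eq:harmonic_neumann}.

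To pass from pointwise convergence to the desired integral limit, I would observe that each $p_N$ is monotone non-decreasing in $x\in\VNh$ (as a one-dimensional discrete harmonic function with monotone boundary data) and that the limit $u\mapsto(h_{\scale_L,\scale_R}(u)-\scale_L)/(\scale_R-\scale_L)$ is continuous on $[0,1]$ in each of the three regimes. Hence Dini's theorem upgrades pointwise to uniform convergence of $h^N_{\scale_L,\scale_R}(\lfloor\cdot N\rfloor)$ to $h_{\scale_L,\scale_R}$ on $[0,1]$. Combined with the fact that any $G\in\mathscr S$ is continuous on $[0,1]$ (Proposition \ref{proposition:characterization_function_spaces} gives $\mathscr S\subset\mathcal C^\infty([0,1])$), the discrete sum above is a Riemann sum of the continuous function $u\mapsto G(u)\,h_{\scale_L,\scale_R}(u)\,\alpha$ and converges to $\int_{[0,1]} G(u)\,h_{\scale_L,\scale_R}(u)\,\alpha\,\dd u$, as required.

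The main obstacle is purely a careful bookkeeping exercise: the only delicate regime is the crossover $\tune=1$, where none of the three terms contributing to $Z^N$ may be discarded and the exact ratio of reservoir and bulk contributions must be tracked to reproduce \eqref{eq:harmonic_robin}. No genuinely new idea is needed beyond the explicit form of $h^N_{\scale_L,\scale_R}$ and dominated/Riemann-sum convergence.
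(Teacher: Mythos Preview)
Your proposal is correct and takes the same route as the paper: uniform convergence of $h^N_{\scale_L,\scale_R}$ to $h_{\scale_L,\scale_R}$ via the explicit formula \eqref{eq:harmonic_discrete}, followed by Riemann-sum convergence using the continuity of $G\in\mathscr S\subset\mathcal C^\infty([0,1])$. One terminological remark: the upgrade from pointwise to uniform convergence you invoke is not the classical Dini theorem (which requires monotonicity in the index $N$), but the related P\'olya-type fact for spatially monotone functions with continuous limit; in any case the explicit formula for $p_N$ yields uniform convergence on $\VN$ directly, so this detour is unnecessary.
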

\begin{proof} We note that, for all $\tune \geq 0$ and by definition of  $h^N_{\scale_L,\scale_R}$ in \eqref{eq:harmonic_discrete}, we have
	\begin{align*}
		\sup_{x \in  \VN}\left|h_{\scale_L,\scale_R}(\tfrac{x}{N}) - h^N_{\scale_L,\scale_R}(x) \right|\underset{N\to \infty}\longrightarrow 0\ .
	\end{align*}
	Combined with the integrability of $G \in \mathscr S$, this concludes the proof.	
\end{proof}

In what follows, for each of the three regimes $\tune < 1$, $\tune = 1$ and $\tune > 1$, we conclude the proof of Theorem \ref{theorem:main_hydrodynamics} by proving \eqref{eq:convergence_dynamic_part} for the processes $\left\{\mathscr Z^N_\cdot: N \in \N \right\} \subset \mathcal D([0,\infty),\mathscr S')$  given in \eqref{eq:decomposition_N2}, for all $N \in \N$ and $t \geq 0$, as
\begin{align}\label{eq:zN}
	\mathscr Z^N_t =&\ \frac{1}{N}\sum_{x \in \VN} \delta_{\frac{x}{N}}\, \left(\tfrac{\eta^N_t(x)}{\alpha}- h^N_{\scale_L,\scale_R}(x)  \right)\alpha\ .	
\end{align}
Moreover,  because $h^N_{\scale_L,\scale_R}$ is harmonic for $A^N$, 
we have
\begin{align*}
	\mathcal L^N h^N_{\scale_L,\scale_R}(x)=A^Nh^N_{\scale_L,\scale_R}(x)=0\ ,\quad x \in \VNh\ ,
\end{align*}
and, hence,
\begin{align}\label{eq:drift2}
	\mathcal L^N \langle G, \mathscr Z^N_t\rangle = \frac{1}{N}\sum_{x\in \VN} G(\tfrac{x}{N})\, A^N\left(D_N(\cdot,\eta^N_t)-h^N_{\scale_L,\scale_R}(\cdot) \right)(x)\, \alpha\ .
\end{align}
We further remark that,  because $\left\{\mathscr X^N_t: t \geq 0 \right\}$ and $\left\{\mathscr Z^N_t: t \geq 0 \right\}$ differ only by a deterministic term, the corresponding martingales arising from Dynkin's decomposition coincide.

In conclusion,  from the following identity
\begin{align}\label{eq:identity_D2}
	\nonumber
	&\left(D_N(x,\eta)-h^N_{\scale_L,\scale_R}(x) \right) \left(D_N(y,\eta)-h^N_{\scale_L,\scale_R}(y) \right)\alpha^2\\
	\nonumber
	=&\ D_N(x,y,\eta)\, \alpha\left( \alpha+\1_{\{x=y\}}\right)- D_N(x,\eta)\, h^N_{\scale_L,\scale_R}(y)\, \alpha^2
	- D_N(y,\eta)\, h^N_{\scale_L,\scale_R}(x)\, \alpha^2\\ +&\ h^N_{\scale_L,\scale_R}(x)\, h^N_{\scale_L,\scale_R}(y)\, \alpha^2+ \1_{\{x=y\}}\, D_N(x,\eta)\, \alpha\ ,\qquad x, y \in \VN\ ,
\end{align}
we obtain,  for all $\tune \geq 0$, $N \in \N$, $G \in \mathscr S$ and $t \geq 0$,
\begin{align}\label{eq:identity_square_field}
	\nonumber
	&\ \left(\langle G, \mathscr Z^N_t\rangle\right)^2\\
	\nonumber
	 =&\ \frac{1}{N^2} \sum_{x \in \VN} \sum_{y \in \VN} G(\tfrac{x}{N})\, G(\tfrac{y}{N}) \left(D_N(x,\eta^N_t)-h^N_{\scale_L,\scale_R}(x) \right) \left(D_N(y,\eta^N_t)-h^N_{\scale_L,\scale_R}(y)  \right) \alpha^2\\
	\nonumber
	=&\ \llangle \left(G\otimes G\right)(\tfrac{\cdot}{N},\tfrac{\cdot}{N}),D_N(\cdot,\cdot,\eta^N_t)\rrangle_{N\times N} - 2\, \llangle G(\tfrac{\cdot}{N}),D_N(\cdot,\eta^N_t)\rrangle_N\, \llangle G(\tfrac{\cdot}{N}), h^N_{\scale_L,\scale_R}\rrangle_N \\[.15cm]
	+&\ \left(\llangle G(\tfrac{\cdot}{N}),h^N_{\scale_L,\scale_R}\rrangle_N \right)^2 + \tfrac{1}{N}\llangle G^2(\tfrac{\cdot}{N}),D_N(\cdot,\eta^N_t)\rrangle_N\ .
\end{align}

\subsection{Proof of Theorem \ref{theorem:main_hydrodynamics}}\label{section:proof_HDL}
\subsubsection{Case $\tune \geq 1$}\label{section:proof_HDL_robin_neumann}
Let us recall the definition in \eqref{eq:zN}. In order to prove Theorem \ref{theorem:main_hydrodynamics}, we  show that  the limiting distribution of the  fields $\{\mathscr Z^N_\cdot: N \in \N\}$ is fully supported on solutions of some integral equations;  the uniqueness result in Proposition \ref{proposition:well_posedness}  concludes then the proof.  More specifically, we prove that,
for all $\tune \ge 1$, $\delta > 0$, $T > 0$ and $G \in \mathscr S$, we have
\begin{align}\label{eq:convergence_sup}
	\Pr^N_{\mu^N}\left(\sup_{t \in [0,T]}\left|\langle G, \mathscr Z^N_t\rangle - \int_0^t  \langle \mathcal AG, \mathscr Z^N_s\rangle\, \dd s-\langle G, \left(\scale_0-h_{\scale_L,\scale_R}\dd u\right) \alpha\rangle   \right|>\delta \right)\underset{N\to \infty}\longrightarrow 0\ .
\end{align} 
By Dynkin's formula in \eqref{eq:dynkin_formula},  the above is equivalent to
\begin{align*}
	\Pr^N_{\mu^N}\left(\sup_{t \in [0,T]}\left|\begin{array}{c}\left[\langle G, \mathscr Z^N_0\rangle  - \langle G, \left(\scale_0-h_{\scale_L,\scale_R}\dd u\right) \alpha\rangle\right] \\[.2cm]+ \left[\int_0^t \left(\mathcal L^N\langle G,\mathscr Z^N_s\rangle - \langle \mathcal A G, \mathscr Z^N_s\rangle\right) \dd s\right]\\[.2cm]
		+\left[\langle G, \mathscr M^N_t\rangle \right] \end{array}\right|>\delta \right)\underset{N\to \infty}\longrightarrow 0\ .
\end{align*}
Let us prove that each of the three terms in square brackets vanishes uniformly in $[0,T]$ in probability; to this purpose, we follow some of the arguments in   \cite[Proposition 4.1]{baldasso_exclusion_2017}. 
Regarding the first term containing information only about the initial conditions of the fields and the limiting solution, by assumption \ref{it:assumption_associated} in Theorem \ref{theorem:main_hydrodynamics} and Lemma \ref{lemma:convergence_static_part}, for all $\tune \geq 1$, we have
\begin{align}\label{eq:time_zero_proof}
	\mu^N\left(\left\{\eta \in \mathcal X_N:\left|\begin{array}{r}\frac{1}{N}\sum_{x \in \VN} G(\tfrac{x}{N})\left(\tfrac{\eta(x)}{\alpha}-h^N_{\scale_L,\scale_R}(x) \right)\alpha\\[.2cm] - \int_{[0,1]}G(u)\left(\scale_0(u)-h_{\scale_L,\scale_R}(u)\right) \alpha\, \dd u\end{array} \right|  > \delta \right\} \right)\underset{N\to \infty}\longrightarrow 0\  .
\end{align}
Turning to the second term consisting of a time integral, it suffices to show, by Chebyshev's and Cauchy-Schwarz inequalities, that
\begin{align}\label{eq:integral_term_robin_neumann}
	\sup_{t \in [0,T]}\E^N_{\mu^N}\left[\left(\mathcal L^N \langle G, \mathscr Z^N_t\rangle - \langle\mathcal A G, \mathscr Z^N_t\rangle \right)^2 \right]\underset{N\to \infty}\longrightarrow 0\ .
\end{align}
By \eqref{eq:drift2} and \eqref{eq:computations_laplacian}, we have
\begin{align}\label{eq:bound}
	&\E^N_{\mu^N}\left[\left(\mathcal L^N \langle G, \mathscr Z^N_t\rangle - \langle\mathcal A G, \mathscr Z^N_t\rangle \right)^2 \right]\\
	\nonumber
	\leq&\ 2\,\E^N_{\mu^N}\left[\left(\langle \alpha\Delta_N G- \mathcal AG, \mathscr Z^N_t\rangle \right)^2 \right]\\
	\nonumber
	+&\ 4\,\E^N_{\mu^N}\left[\left(\left(\alpha\, \nabla^+_N G(0)-N^{1-\tune}\alpha_L\, G(\tfrac{1}{N}) \right)\left(D_N(1,\eta^N_t)-h^N_{\scale_L,\scale_R}(1) \right)\alpha\right)^2\right]\\
	+&\ 4\, \E^N_{\mu^N}\left[\left(\left(\alpha\, \nabla^-_N G(1) - N^{1-\tune}\alpha_R\, G(\tfrac{N-1}{N}) \right) \left(D_N(N-1,\eta^N_t)-h^N_{\scale_L,\scale_R}(N-1) \right)\alpha
	\right)^2 \right]\ .
	\nonumber	
\end{align}
The first term on the r.h.s.\ above, by the smoothness of $G \in \mathscr S$, the identity \eqref{eq:identity_square_field} and the upper bound \eqref{eq:bound_second_order_polynomials},
vanishes as $N\to \infty$. Both the second and third terms on the r.h.s.\ in \eqref{eq:bound} can be treated analogously; we therefore only give details for the second term. By \eqref{eq:bound_second_order_polynomials} and the identity \eqref{eq:identity_D2} there exists a constant $C=C(\kappa,\scale_L,\scale_R) > 0$ for which we have
\begin{align*}
	&\E^N_{\mu^N}\left[\left(
	\left(\alpha\,  \nabla_N^+G(0)-N^{1-\tune}\alpha_L\, G(\tfrac{1}{N}) \right)\left(D_N(1,\eta^N_t)-h^N_{\scale_L,\scale_R}(1) \right )\alpha\right)^2\right]\\
	\leq&\ C\, \alpha^2 \left(\alpha\, \nabla_N^+G(0)-N^{1-\tune}\alpha_L\, G(\tfrac{1}{N}) \right)^2\ .
\end{align*}
For the case $\tune = 1$, because $G \in \mathscr S$ is smooth and satisfies the boundary conditions in \eqref{eq:boundary_conditions_robin}, we have
\begin{align*}
	\left(\alpha\, \nabla_N^+G(0)-\alpha_L\, G(\tfrac{1}{N}) \right)^2 \leq&\ 
	2\left(\alpha\, \partial^+_u G(0)-\alpha_L\, G(0) \right)^2
	+ \frac{2}{N^2} \left(\alpha \left\|\partial_u^2 G \right\|_{\infty}+\alpha_L \left\| \partial^2_u G\right\|_\infty \right)^2\\
	=&\ \frac{2}{N^2} \left(\alpha \left\|\partial_u^2 G \right\|_{\infty}+\alpha_L \left\| \partial^2_u G\right\|_\infty \right)^2\underset{N\to \infty}\longrightarrow 0\ ,
\end{align*}
with $\left\| \cdot\right\|_\infty$ denoting the supremum norm on $[0,1]$.
For the case $\tune > 1$, $G \in \mathscr S$ satisfies the boundary conditions in \eqref{eq:boundary_conditions_neumann}, yielding
\begin{align*}
	\left(\alpha\, \nabla_N^+G(0)-N^{1-\tune}\alpha_L\, G(\tfrac{1}{N}) \right)^2 \leq&\ 2\left(\alpha\,\partial^+_u G(0)  + N^{1-\tune}\alpha_L \left\|G \right\|_\infty\right)^2 +\frac{2}{N^2} \left(\alpha \left\|\partial_u G \right\|_\infty \right)^2\\
	=&\ \frac{2}{N^{2(\tune-1)}}\left(\alpha_L\left\| G\right\|_\infty \right)^2 + \frac{2}{N^2} \left(\alpha \left\|\partial_u G \right\|_\infty \right)^2\underset{N\to \infty}\longrightarrow 0\ .
\end{align*}
This proves \eqref{eq:integral_term_robin_neumann} for all $\tune \geq 1$. We conclude the proof of \eqref{eq:convergence_sup} by showing that
\begin{align}\label{eq:martingale_robin_neumann}
	\E^N_{\mu^N}\left[\frac{1}{N^2} \int_0^T	 \left\{\begin{array}{c} \sum_{x \in \VN\setminus \{N-1\}} N^2 \alpha \left(G(\tfrac{x+1}{N})-G(\tfrac{x}{N}) \right)^2 \mathcal V^N_{\{x,x+1\}}(\eta^N_s)\, \alpha\\[.2cm]
		+\, N^{2-\tune} \alpha_L \left(G(\tfrac{1}{N}) \right)^2 \mathcal V^N_{\{0,1\}}(\eta^N_s)\, \alpha\\[.2cm]
		+\, N^{2-\tune} \alpha_R \left(G(\tfrac{N-1}{N}) \right)^2 \mathcal V^N_{\{N-1,N\}}(\eta^N_s)\, \alpha
	\end{array} \right\}\dd s \right]\underset{N \to \infty}\longrightarrow 0\ ,	
\end{align}
where the expression inside the expectation is the predictable quadratic variation of the martingale arising from Dynkin's decomposition of the fields $\{\mathscr Z^N_\cdot: N \in \N\}$, see \eqref{eq:predictable_QV} for the definition.
Indeed, \eqref{eq:martingale_robin_neumann} follows because, by \eqref{eq:bound_second_order_polynomials}, we have 	
\begin{align*}
	\sup_{N\in \N}\sup_{x \in \VNh\setminus\{N\}} \sup_{t \in [0,T]} \left|\E^N_{\mu^N}\mathcal V_{\{x,x+1\}}^N(\eta^N_s) \right|\leq C
\end{align*}
for some constant $C=C(\kappa,\scale_L,\scale_R)> 0$ and, by Fubini, 
\begin{align*}
	&\E^N_{\mu^N}\left[\frac{1}{N^2} \int_0^T	 \left\{\begin{array}{c} \sum_{\substack{x \in \VN\\x \neq N-1}} N^2 \alpha \left(G(\tfrac{x+1}{N})-G(\tfrac{x}{N}) \right)^2 \mathcal V^N_{\{x,x+1\}}(\eta^N_s)\, \alpha\\[.2cm]
		+\, N^{2-\tune} \alpha_L \left(G(\tfrac{1}{N}) \right)^2 \mathcal V^N_{\{0,1\}}(\eta^N_s)\, \alpha\\[.2cm]
		+\, N^{2-\tune} \alpha_R \left(G(\tfrac{N-1}{N}) \right)^2 \mathcal V^N_{\{N-1,N\}}(\eta^N_s)\, \alpha
	\end{array} \right\}\dd s \right]\\
	\leq&\ \frac{C\, T}{N}\left\{\frac{1}{N}\sum_{\substack{x \in \VN\\ x\neq N-1}}  \left(\nabla_N^+ G(\tfrac{x}{N}) \right)^2 \alpha^2 + N^{1-\tune} \alpha_L \left( G(\tfrac{1}{N})\right)^2 \alpha + N^{1-\tune} \alpha_R \left(G(\tfrac{N-1}{N}) \right)^2 \alpha  \right\}\ , 
\end{align*}
which vanishes as $N\to \infty$ because $\tune \geq 1$ and $G \in \mathscr S$ is smooth.

\subsubsection{Case $\tune \in [0,1)$}\label{section:proof_HDL_dirichlet} Here, compared to the case $\tune \geq 1$,  we adopt a different strategy  since, due to the higher intensity of the reservoir interaction, we cannot directly prove the claim in \eqref{eq:convergence_sup} with the supremum over time. Instead, we  prove first convergence of finite dimensional distributions and then tightness for the empirical density fields $\left\{\mathscr Z^N_\cdot: N \in \N \right\}\subset \mathcal D([0,\infty),\mathscr S')$.

\

Let $\left\{\scale(t)-h_{\scale_L,\scale_R}\dd u: t \geq 0 \right\}$ be the unique Dirichlet  solution  with initial condition given by $\scale_0-h_{\scale_L,\scale_R}\dd u$. To the purpose of showing convergence of finite dimensional distributions to those of the deterministic process $\left\{\left(\scale(t)-h_{\scale_L,\scale_R}\dd u\right)\alpha: t \geq 0 \right\} \in \mathcal C([0,\infty),\mathscr S')$, it suffices to prove that, for all $\tune \geq 0$, $t \geq 0$, $G \in \mathscr S$ and $\delta > 0$, 
\begin{align}\label{eq:convergence_one_time_hydrodynamic}
	\Pr^N_{\mu^N}\left(\left|\langle G, \mathscr Z^N_t\rangle- \langle G, \left(\scale(t)-h_{\scale_L,\scale_R}\dd u\right) \alpha\rangle \right| > \delta \right)\underset{N\to \infty}\longrightarrow 0
\end{align}
holds true. Notice again that, compared to \eqref{eq:convergence_sup}, the supremum over time does not appear in the displacement above.
 Instead of proving \eqref{eq:convergence_one_time_hydrodynamic} directly, we introduce an auxiliary process -- reminiscent of the so-called \emph{corrected empirical density field} (see, e.g., \cite{jara_quenched_2008}) --  whose finite dimensional distributions approximate those of the empirical fields $\left\{\mathscr Z^N: N \in \N \right\}$ and for which this convergence follows right away. First, we need to prove the following lemma. 
\begin{lemma}\label{lemma:approximationG}
	For all	 $\tune \in [0,1)$ and $G \in \mathscr S$, there exists a sequence of functions
	\begin{align*}
		\left\{G_N: N \in \N \right\}\ ,\quad  G_N: \VNh/N\to \R\ ,\quad G_N(0)=G_N(1)=0\ ,\quad \forall\, N \in \N\ ,
	\end{align*}
	such that
	\begin{align}\label{eq:convergenceG_N}
		\sup_{x \in \VNh}\left|G_N(\tfrac{x}{N})-G(\tfrac{x}{N}) \right|\underset{N\to \infty}\longrightarrow 0\qquad \text{and}\qquad
		\sup_{x \in \VNh}\left|\mathcal A^NG_N(\tfrac{x}{N})-\mathcal AG(\tfrac{x}{N}) \right|\underset{N\to \infty}\longrightarrow 0
	\end{align}
	hold.
\end{lemma}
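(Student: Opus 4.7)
The plan is to take $G_N$ as the unique solution of a discrete Dirichlet boundary value problem designed so that the second convergence in \eqref{eq:convergenceG_N} holds by construction, and then obtain the first convergence via a Green-function estimate for the one-particle generator $A^N$. Specifically, for each $N \in \N$ I let $G_N : \VNh/N \to \R$ be the unique function with $G_N(0) = G_N(1) = 0$ solving
\begin{equation*}
A^N G_N(\tfrac{\cdot}{N})(x) = \alpha\, G''(\tfrac{x}{N}), \qquad x \in \VN;
\end{equation*}
unique solvability follows from \eqref{eq:symmetry_AN} and the strict positivity of the associated Dirichlet form on functions vanishing at $\{0, N\}$. Since $\mathcal A^N G_N(\tfrac{x}{N}) = A^N G_N(\tfrac{\cdot}{N})(x)$ by \eqref{eq:mathcalA^N}, the second limit in \eqref{eq:convergenceG_N} is then an exact identity: $\mathcal A^N G_N(\tfrac{x}{N}) = \mathcal A G(\tfrac{x}{N})$ for $x \in \VN$, and at $x \in \{0, N\}$ both sides vanish thanks to $G''(0) = G''(1) = 0$, which is part of the Dirichlet boundary conditions \eqref{eq:boundary_conditions_dirichlet} on $\mathscr S$.

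For the first convergence, set $V_N := G_N - G$ on $\VNh/N$, so that $V_N(0) = V_N(1) = 0$ and $A^N V_N(\tfrac{\cdot}{N})(x) = \alpha\, G''(\tfrac{x}{N}) - A^N G(\tfrac{\cdot}{N})(x)$. A Taylor expansion, exploiting $G(0) = G(1) = G''(0) = G''(1) = 0$, gives $|A^N V_N(\tfrac{x}{N})| \leq C/N^2$ in the bulk $2 \leq x \leq N-2$, and only the weaker bound $|A^N V_N(\tfrac{x}{N})| \leq C\, N$ at the boundary sites $x \in \{1, N-1\}$: the uncancelled terms $N\alpha\, G'(0)$ and $N^{1-\tune}\alpha_L\, G'(0)$ appear precisely because the Dirichlet conditions on $\mathscr S$ do not force $G'(0) = 0$.

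To convert these pointwise estimates into a sup-norm bound on $V_N$, I use the Green function $\Gamma^N(x, y)$ of $-A^N$ with Dirichlet data at $\{0, N\}$ to write
\begin{equation*}
V_N(\tfrac{x}{N}) = \sum_{y \in \VN} \Gamma^N(x, y)\, \bigl(-A^N V_N(\tfrac{\cdot}{N})(y)\bigr).
\end{equation*}
The bulk part contributes at most $O(1/N^2) \cdot \sum_y \Gamma^N(x,y) = O(1/N^2)$, since $\sum_y \Gamma^N(x, y)$ equals the expected absorption time of the single-particle chain with generator $A^N$ started at $x$, which is uniformly bounded by comparison with a symmetric random walk of bulk rate $N^2 \alpha$. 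For the boundary part, I compute $\Gamma^N(\cdot, 1)$ explicitly by solving $-A^N \Gamma^N(\cdot, 1) = \delta_{1,\cdot}$ with Dirichlet data: the solution is $A^N$-harmonic on $\{2, \ldots, N-1\}$, hence affine there, and matching the modified absorption condition at $x = N-1$ yields $\|\Gamma^N(\cdot, 1)\|_\infty = O(N^{\tune - 2})$, and symmetrically for $\Gamma^N(\cdot, N-1)$. The boundary contribution is therefore $O(N) \cdot O(N^{\tune - 2}) = O(N^{\tune - 1})$, which vanishes as $N \to \infty$ precisely because $\tune < 1$. The main technical step is this explicit Green-function estimate at the boundary sites, which crucially exploits the scaling $N^{2-\tune}$ of the reservoir rates in \eqref{eq:generator_A} and is the reason this construction is restricted to the Dirichlet regime.
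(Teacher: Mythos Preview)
Your argument is correct and takes a genuinely different route from the paper's. The paper constructs $G_N$ by an explicit formula: it writes down a piecewise-affine modification of $G$ with a free constant $C_N(G)$ chosen so that $G_N(1)=0$, and then verifies by direct computation that $A^N G_N(\tfrac{\cdot}{N})(x)=\alpha\,\Delta_N G(\tfrac{x}{N})$ for $x\in\VN$; the first convergence in \eqref{eq:convergenceG_N} is then read off the explicit formula, using only $|G(\tfrac{1}{N})|\le \tfrac{1}{N}|\partial_u^+G(0)|+O(N^{-2})$ and the analogous bound at the right endpoint. Your construction instead defines $G_N$ implicitly as the unique solution of the discrete Dirichlet problem $A^N G_N=\alpha G''$, and obtains the first convergence via the Green function representation: the boundary sites carry a large source of order $N$, but $\|\Gamma^N(\cdot,1)\|_\infty=O(N^{\tune-2})$ absorbs it to give $O(N^{\tune-1})\to 0$. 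The paper's approach is more elementary and self-contained---one never needs to estimate absorption times or Green kernels---while yours is more systematic and would transfer more readily to geometries where explicit formulas are unavailable; in fact the Green-function bounds you need (the $O(1)$ expected absorption time and the $O(N^{\tune-2})$ boundary Green function) are precisely the one-dimensional electric-network estimates underlying Lemma~\ref{lemma:balordo_rw}. One minor point: your ``comparison with a symmetric random walk of bulk rate $N^2\alpha$'' to bound $\sum_y\Gamma^N(x,y)$ goes the wrong way as stated, since slowing the absorption rate at the boundary increases the expected absorption time; the bound is nonetheless correct and follows either from Lemma~\ref{lemma:balordo_rw} or from the resistance identity $\Gamma^N(y,y)=R_L(y)R_R(y)/R\le R/4$ together with $R=O(1/N)$ for $\tune<1$.
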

\begin{proof}
	The function $G_N$ is given as follows:
	\begin{small}
		\begin{multline*}
			G_N(\tfrac{x}{N})
			\coloneqq \1_{\{x > 0\}}\left\{\begin{array}{l}\tfrac{\alpha}{N^{-\tune}\alpha_L} \left(G(\tfrac{1}{N})-\frac{C_N(G)}{N}\right) \\[.2cm]
				+\1_{\{1<x<N\}}  \left(G(\tfrac{x}{N})-G(\tfrac{1}{N})-\frac{x-1}{N}C_N(G) \right)\\[.2cm] +\1_{\{x=N\}}\left(G(\tfrac{N-1}{N})-G(\tfrac{1}{N})-\frac{N-2}{N}C_N(G)+	 \tfrac{\alpha}{N^{-\tune}\alpha_R} \left(-G(\tfrac{N-1}{N})-\frac{C_N(G)}{N} \right)\right)\end{array}\right\}\ ,
	\end{multline*}\end{small}where
	\begin{align}\label{eq:CGN}
		C_N(G) \coloneqq  \frac{\left(\frac{\alpha}{N^{-\tune} \alpha_L}-1\right)G(\tfrac{1}{N})+G(\tfrac{N-1}{N})\left(1-\frac{\alpha}{N^{-\tune}\alpha_R}\right)}{\frac{1}{N} \left(\frac{\alpha}{N^{-\tune} \alpha_L}+(N-2)+\frac{\alpha}{N^{-\tune}\alpha_R} \right)}
	\end{align}
	is chosen such that $G_N(1)=0$. By applying the generator $A^N$ to such function, we obtain
	\begin{align*}	
		A^NG_N(\tfrac{x}{N})\ &=\ N^2 \alpha \left(G(\tfrac{x+1}{N})+G(\tfrac{x-1}{N})-2 G(\tfrac{x}{N}) \right)	= \alpha\Delta_N G(\tfrac{x}{N})
	\end{align*}
	if $x \in \VN$ and $A^NG_N(0)=A^NG_N(1)=0$.
	As a consequence, we get the second convergence in \eqref{eq:convergenceG_N}.
	On the other side, 
	\begin{align*}
		\sup_{x \in \VNh} \left|G_N(\tfrac{x}{N})-G(\tfrac{x}{N}) \right| \leq \left| G(\tfrac{1}{N})\right|\left(1+\tfrac{\alpha}{N^{-\tune}\alpha_L } \right) + \left|G(\tfrac{N-1}{N})\right|\left(1+\tfrac{\alpha}{ N^{-\tune}\alpha_R} \right)
		\ .
	\end{align*}
	Let us observe that $|G(\tfrac{1}{N})|\leq \tfrac{1}{N}|\partial^+_uG(0)|+ \tfrac{C_0}{N^2}$ and $|G(\tfrac{N-1}{N})|\leq \tfrac{1}{N}|\partial^-_uG(1)|+ \tfrac{C_1}{N^2}$ for all $N \in \N$ large enough and some constants $C_0, C_1 > 0$ independent of $N \in \N$. As a consequence,  because $\tune\in [0,1)$, we obtain the first convergence in  
	\eqref{eq:convergenceG_N}. This concludes the proof.
\end{proof}
Let us now prove convergence in probability  of  one-dimensional distributions, and notice that, by a union bound,  the latter immediately yields convergence  of finite-dimensional distributions.  More precisely, we prove  that, for all $\tune \in [0,1)$, $G \in \mathscr S$,  $t \geq 0$ and $\delta > 0$, we have
\begin{align}\label{eq:convergence_fdd_GN}
	\Pr^N_{\mu^N}\left(\left|\langle G,\mathscr Z^N_t\rangle - \int_0^t \langle \mathcal A G, \mathscr Z^N_s\rangle \, \dd s-\langle G, \scale_0-h_{\scale_L,\scale_R}\dd u\rangle \right|>\delta \right) \underset{N\to \infty}\longrightarrow 0\ .
\end{align} 
By the triangle inequality, the above follows if we can show that, for all $\delta > 0$,
\begin{equation}\label{eq:claim1}
	\Pr^N_{\mu^N}\left(\left|\langle G,\mathscr Z^N_t\rangle - \langle G_N,\mathscr Z^N_t\rangle  \right|>\delta \right) \underset{N\to \infty}\longrightarrow 0
\end{equation}
and
\begin{align}\label{eq:claim2}
	\Pr^N_{\mu^N}\left(\left|\langle G_N,\mathscr Z^N_t\rangle - \int_0^t \langle \mathcal A G, \mathscr Z^N_s\rangle \, \dd s-\langle G, \scale_0-h_{\scale_L,\scale_R}\dd u\rangle \right|>\delta \right) \underset{N\to \infty}\longrightarrow 0
\end{align}
hold, where the functions $\left\{G_N: N \in \N \right\}$ are those given in Lemma \ref{lemma:approximationG}. The claim in \eqref{eq:claim1} follows at once from H\"older's inequality, the uniform bounds in \eqref{eq:bound_second_order_polynomials} and Lemma \ref{lemma:approximationG}.

 Let us now deal with the claim in \eqref{eq:claim2} by means of Dynkin's formula for $\{\langle G_N,\mathscr Z^N_\cdot\rangle: N \in \N\}$. As a first step, we have
\begin{align*}
	&\Pr^N_{\mu^N}\left(\left|\langle G_N,\mathscr Z^N_0\rangle - \langle G, \left(\scale_0-h_{\scale_L,\scale_R}\dd u \right)\alpha \right|>\delta \right)\\
	\leq\ &\Pr^N_{\mu^N}\left(\left|\langle G_N,\mathscr Z^N_0\rangle  - \langle G, \mathscr Z^N_0\rangle \right|>\tfrac{\delta}{2} \right)\\
	+\ &\Pr^N_{\mu^N}\left(\left|\langle G,\mathscr Z^N_0\rangle  - \langle G, \left(\scale_0-h_{\scale_L,\scale_R}\dd u \right)\alpha \right|>\tfrac{\delta}{2} \right)
\end{align*}
and both terms on the r.h.s.\ vanish as $N\to \infty$; more specifically, the first term vanishes because of Markov's inequality, assumption \ref{it:assumption_bounds} and the first convergence in \eqref{eq:convergenceG_N}, while the second term because of assumption \ref{it:assumption_associated} and Lemma \ref{lemma:convergence_static_part}. Moreover, for all $\delta > 0$ and $t \geq 0$, we have
\begin{align*}
	\Pr^N_{\mu^N}\left(\left|\int_0^t \left( \mathcal L^N\langle G_N,\mathscr Z^N_s\rangle - \langle \mathcal A G, \mathscr Z^N_s\rangle \right) \dd s \right|>\delta \right)\underset{N\to \infty}\longrightarrow 0\ ,
\end{align*}
which follows by Markov's inequality, duality \eqref{eq:drift}, the symmetry of $A^N$ as in \eqref{eq:symmetry_one}, Tonelli's theorem, \eqref{eq:bound_second_order_polynomials} and the second convergence in \eqref{eq:convergenceG_N}:
\begin{align*}
	&\Pr^N_{\mu^N}\left(\left|\int_0^t \left( \mathcal L^N\langle G_N,\mathscr Z^N_s\rangle - \langle \mathcal A G, \mathscr Z^N_s\rangle \right) \dd s \right|>\delta \right)\\
	\leq\ &\tfrac{1}{\delta}\,\E^N_{\mu^N}\left[\int_0^t \frac{1}{N}\sum_{x \in \VN} \left|\mathcal A^N G_N(\tfrac{x}{N}) - \mathcal AG(\tfrac{x}{N}) \right| \left(D_N(x,\eta^N_s)+h^N_{\scale_L,\scale_R}(x) \right)\alpha\, \dd s \right]\\
	\leq\ &\tfrac{t}{\delta}\, \sup_{x \in \VNh} \left|\mathcal A^N G_N(\tfrac{x}{N}) - \mathcal AG(\tfrac{x}{N}) \right|\left\{ \sup_{s \in [0,t]} \E^N_{\mu^N}\left[D_N(x,\eta^N_s) \right] + \max\left\{\scale_L,\scale_R \right\}\right\}\ .
\end{align*}
In conclusion, the martingales arising from  Dynkin's decomposition of $\left\{\langle G_N,\mathscr Z^N_\cdot\rangle: N \in \N\right\}$ vanish in probability as $N\to \infty$. Indeed, for all $t \geq 0$, 
\begin{align}
	\E^N_{\mu^N}\left[\left(\langle G_N,\mathscr M^N_t\rangle \right)^2\right] \leq \frac{C\, t}{N}\left\{\frac{1}{N} \sum_{x\in \VN} \left(-\mathcal A^N G_N(\tfrac{x}{N}) \right) G_N(\tfrac{x}{N})\right\}\underset{N\to \infty}\longrightarrow 0\ ,
\end{align}
because the expression between curly brackets is uniformly bounded in $N \to \infty$ by \eqref{eq:convergenceG_N} and where, by \eqref{eq:bound_second_order_polynomials},  $C=C(\kappa,\scale_L,\scale_R)>0$ is a constant independent of $N \in \N$ and $t \geq 0$.

\

The proof of Theorem \ref{theorem:main_hydrodynamics} for the case $\tune \in [0,1)$ ends as soon as we show, by Mitoma's tightness criterion \cite{mitoma_tightness_1983},  that, for all $G \in \mathscr S$, the sequence  $\left\{\langle G, \mathscr Z^N_\cdot\rangle: N \in \N \right\}$ is tight in $\mathcal D([0,\infty),\R)$. Most of the steps of this proof may be adapted from those in Section \ref{section:proof_HDL_robin_neumann}, with the only exceptions that, for all $G \in \mathscr S$ and $t \geq 0$, the following boundary terms
\begin{equation}\label{eq:boundary_term_dirichlet1}
	\E^N_{\mu^N}\left[\left(\alpha\, \nabla^+_N G(0) - \alpha_L\, N^{1-\tune}\, G(\tfrac{1}{N}) \right)^2 \left(D_N(1,\eta^N_t)-h^N_{\scale_L,\scale_R}(1) \right)^2 \alpha^2 \right]
\end{equation}
\begin{equation*}
	\E^N_{\mu^N}\left[\left(\alpha\, \nabla^-_N G(1) - \alpha_R\, N^{1-\tune}\, G(\tfrac{N-1}{N}) \right)^2 \left(D_N(N-1,\eta^N_t)-h^N_{\scale_L,\scale_R}(N-1) \right)^2 \alpha^2 \right]
\end{equation*}
and
\begin{small}
	\begin{equation}\label{eq:boundary_term_dirichlet2}
		\frac{1}{N^2}\left\{ N^{2-\tune} \alpha_L \left(G(\tfrac{1}{N}) \right)^2\E^N_{\mu^N}\left[\mathcal V^N_{\{0,1\}}(\eta^N_t) \right] \alpha  +  N^{2-\tune} \alpha_R\, \left(G(\tfrac{N-1}{N}) \right)^2 \E^N_{\mu^N}\left[\mathcal V^N_{\{N-1,N\}}(\eta^N_t) \right] \alpha \right\}
	\end{equation}
\end{small}are uniformly bounded in $N \in \N$ because of the boundary conditions \eqref{eq:boundary_conditions_dirichlet} that $G \in \mathscr S$ satisfies and  the uniform bounds in \eqref{eq:bound_second_order_polynomials}.

\subsubsection{Some considerations for the case $\tune < 0$} The particle system dynamics described by the generator $\mathcal L^N$ in \eqref{eq:generator} as well as the duality relations and the results in Lemmas \ref{lemma:convergence_static_part} and \ref{lemma:approximationG} clearly extend to the setting of \textquotedblleft fast\textquotedblright\ boundary, i.e., $\tune <0$ if constructing $\mathscr S$ for $\tune < 0$ as done for the case $\tune \in [0,1)$. Moreover, from the first part of the proof in Section \ref{section:proof_HDL_dirichlet}, it follows that, for all $\tune < 0$ and $G \in \mathscr S$, the sequence
\begin{align*}
	\left\{\langle G_N, \mathscr Z^N_\cdot\rangle : N \in \N\right\}
\end{align*}
is tight in $\mathcal D([0,\infty),\R)$, where the sequence $\left\{G_N: N \in \N \right\}$ is the one given in Lemma \ref{lemma:approximationG}, and,  for all $T > 0$ and $\delta > 0$, the following convergence 
\begin{align}\label{eq:uniform_convergence_corrected}
	\Pr^N_{\mu^N}\left(\sup_{t \in [0,T]}\left|\langle G_N, \mathscr Z^N_t \rangle - \langle G, \left(\scale(t)-h_{\scale_L,\scale_R}\dd u \right)\alpha \rangle \right|> \delta  \right)\underset{N\to \infty}\longrightarrow 0
\end{align}
holds, 
where $\left\{\scale(t)-h_{\scale_L,\scale_R}\dd u: t \geq 0 \right\} \in \mathcal C([0,\infty),\mathscr S')$ is the unique  Dirichlet solution in $\mathscr S'$ with initial condition given by $\scale_0-h_{\scale_L,\scale_R}\dd u$. Moreover, by Lemma \ref{lemma:approximationG} and the uniform bounds in \eqref{eq:bound_second_order_polynomials},  it follows that,  for all $\tune < 0$,  $G \in \mathscr S$, $t \geq 0$ and $\delta > 0$, 
\begin{align}\label{eq:convergence_fdd}
	\Pr^N_{\mu^N}\left(\left|\langle G, \mathscr Z^N_t\rangle - \langle G_N,\mathscr Z^N_t\rangle \right|>\delta\right)\underset{N\to \infty}\longrightarrow 0 \ , 
\end{align}
yielding, in particular, convergence of the finite dimensional distribution for the fields $\left\{\mathscr Z^N_\cdot: N \in \N \right\}$: for all $\tune < 0$, $G \in \mathscr S$, $t \geq 0$ and $\delta > 0$,
\begin{align}
	\Pr^N_{\mu^N}\left(\left|\langle G, \mathscr Z^N_t\rangle - \langle G, \left( \scale(t)-h_{\scale_L,\scale_R}\dd u \right)\alpha \rangle  \right|> \delta \right)\underset{N\to \infty}\longrightarrow 0\ .	
\end{align} 
However, tightness of the empirical density fields $\left\{\mathscr Z^N_\cdot: N \in \N \right\}$ in $\mathcal D([0,\infty),\mathscr S')$ for the case $\tune <0$ does not follow from the arguments used in the second part of Section \ref{section:proof_HDL_dirichlet} above because the boundary terms in \eqref{eq:boundary_term_dirichlet1} and \eqref{eq:boundary_term_dirichlet2} are not, in general, uniformly bounded in $N \in \N$. 

An alternative approach to derive the hydrodynamic limit for the case $\tune < 0$ would be, in view of \eqref{eq:uniform_convergence_corrected}, to strengthen the convergence in \eqref{eq:convergence_fdd} by requiring, for all $T > 0$ and $\delta > 0$,
\begin{align}\label{eq:convergence_fdd2}
	\Pr^N_{\mu^N}\left(\sup_{t \in [0,T]}\left|\langle G, \mathscr Z^N_t\rangle - \langle G_N,\mathscr Z^N_t\rangle \right|>\delta\right)\underset{N\to \infty}\longrightarrow 0 \ .
\end{align}
Because of Lemma \ref{lemma:approximationG}, \eqref{eq:convergence_fdd2} would follow, by Markov's inequality, from 
\begin{align}\label{eq:total_number_particles_finite}
	\sup_{N \in \N}\E^N_{\mu^N}\left[\sup_{t \in [0,T]}\left(\frac{1}{N}\sum_{x \in \VN} \eta^N_t(x) \right)^p \right]<\infty
\end{align}
for some $p > 0$.
However, while \eqref{eq:total_number_particles_finite} is trivially satisfied by the $\SIP$ for which each site can accommodate at most one particle at the time, this is no more the case for the open $\SIP$ and  the validity of \eqref{eq:total_number_particles_finite} is not guaranteed.
\subsection{Proof of Theorem \ref{theorem:main_hydrostatic}}\label{section:proof_HDS}

We split the proof of Theorem \ref{theorem:main_hydrostatic} in two parts: we first show that assumption \ref{it:assumption_bounds} and then that assumption \ref{it:assumption_associated} of Theorem \ref{theorem:main_hydrodynamics} hold for the sequence $\left\{\mu^N_{\scale_L,\scale_R}: N \in \N \right\}$. Once these assumptions are verified,   Theorem \ref{theorem:main_hydrodynamics} applies, yielding the hydrostatic limit.  We remark that the arguments employed in this section  hold true also for negative values of the parameter $\tune$.

\

Let us introduce, for all $\tune \geq 0$ and $N \in \N$, the following function
\begin{align}\label{eq:definitionkN}
	k^N_{\scale_L,\scale_R}(x,y)\coloneqq E_{\mu^N_{\scale_L,\scale_R}}\left[D_N(x,y,\eta) \right] = \lim_{t\to \infty}\E^N_{\mu^N}\left[D_N(x,y,\eta^N_t) \right]\ ,\quad x, y \in \VNh\ ,	
\end{align}
for any probability measure $\mu^N$ on $\mathcal X_N$, which, by stationarity of $\mu^N_{\scale_L,\scale_R}$ and duality \eqref{eq:duality_two}, solves the following linear boundary value problem:
\begin{align}\label{eq:boundary_value_discrete_two}
	\left\{\begin{array}{rcll}
		B^Nf(x,y) &=& 0 &\quad (x,y) \in \VN \times \VN\\
		f(x,y) &=& h^N_{\scale_L,\scale_R}(x)\, h^N_{\scale_L,\scale_R}(y) &\quad 	 (x,y) \in (\VNh \times \VNh) \setminus \left(\VN \times \VN\right)\ ,
	\end{array}
	\right.
\end{align}
where we recall that $B^N$ is the infinitesimal generator corresponding to two inclusion particles in $\VN$ with absorbing sites $\{0,N\}$ as defined in \eqref{eq:generator_two_particles} and $h^N_{\scale_L,\scale_R}$ is the solution of \eqref{eq:boundary_value_discrete_one} and given in \eqref{eq:harmonic_discrete}. We note that, while for the open symmetric exclusion process the stationary two-point correlations are known (see, e.g., \cite{derrida_exact_1993-1}, \cite[Eq.\ (2.23)]{goncalves_non-equilibrium_2019}), for the open symmetric inclusion process the function $k^N_{\scale_L,\scale_R}$ is not, in general, explicit.

\subsubsection{Assumption \ref{it:assumption_bounds} of Theorem \ref{theorem:main_hydrodynamics} for the stationary measure}\label{section:proof_HDS_b}
By the maximum principle applied to the boundary value problems \eqref{eq:boundary_value_discrete_one} and \eqref{eq:boundary_value_discrete_two}, we obtain
\begin{align*}
	0\leq h^N_{\scale_L,\scale_R}(x)\leq\max\left\{\scale_L,\scale_R \right\}\qquad \text{and}\qquad 0 \leq k^N_{\scale_L,\scale_R}(x,y)\leq \max\left\{\scale_L^2,\scale_R^2 \right\}
\end{align*}
for all $x, y \in \VNh$, yielding, by \eqref{eq:definitionhN} and \eqref{eq:definitionkN}, the bounds in assumption \ref{it:assumption_bounds} of Theorem \ref{theorem:main_hydrodynamics} with $\kappa = \max\left\{\scale_L,\scale_R \right\}$.

\subsubsection{Assumption \ref{it:assumption_associated} of Theorem \ref{theorem:main_hydrodynamics} for the stationary measure}
In this section we prove that, for all $\tune \geq 0$, $G \in \mathscr S$ and $\delta > 0$, we have
\begin{align}\label{eq:a_stationary}
	\mu^N_{\scale_L,\scale_R}\left(\left\{\eta \in \mathcal X_N: \left|\frac{1}{N}\sum_{x \in \VN}G(\tfrac{x}{N})\,	 \eta(x) - \langle G,h_{\scale_L,\scale_R}\dd u\,\alpha\rangle  \right|>\delta \right\} \right)\ \underset{N\to \infty}\longrightarrow\ 0\ ,
\end{align} 
or, equivalently by Lemma \ref{lemma:convergence_static_part},  
\begin{align}\label{eq:a_stationary2}
	\mu^N_{\scale_L,\scale_R}\left(\left\{\eta \in \mathcal X_N: \left|\frac{1}{N}\sum_{x \in \VN}G(\tfrac{x}{N})\left( D_N(x,\eta)-h^N_{\scale_L,\scale_R}(x)\right)\alpha  \right|>\delta \right\} \right)\ \underset{N\to \infty}\longrightarrow\ 0\ ,
\end{align}
where $h_{\scale_L,\scale_R}\dd u \in \mathcal C^\infty([0,1])$ is given in either \eqref{eq:harmonic_dirichlet} if $\tune < 1$, \eqref{eq:harmonic_robin} if $\tune =1$ or \eqref{eq:harmonic_neumann} if $\tune > 1$. In  view of  Chebyshev's inequality,  we prove
\begin{align}
	E_{\mu^N_{\scale_L,\scale_R}}\left[\left(\frac{1}{N}\sum_{x\in \VN}G(\tfrac{x}{N})\left( D_N(x,\eta)-h^N_{\scale_L,\scale_R}(x)\right)\alpha  \right)^2 \right]\underset{N\to \infty}\longrightarrow 0\ ,	
\end{align}
from which \eqref{eq:a_stationary2} follows for all $\delta > 0$. To this purpose, by \eqref{eq:identity_square_field} and stationarity of $\mu^N_{\scale_L,\scale_R}$, we have
\begin{align*}
	&E_{\mu^N_{\scale_L,\scale_R}}\left[\left(\frac{1}{N}\sum_{x\in \VN}G(\tfrac{x}{N})\left( D_N(x,\eta)-h^N_{\scale_L,\scale_R}(x)\right)\alpha  \right)^2 \right]\\
	=&\  \llangle G\otimes G, k^N_{\scale_L,\scale_R}\rrangle_{N\times N} - \left(\llangle G, h^N_{\scale_L,\scale_R}\rrangle_N \right)^2 + \tfrac{1}{N} \llangle G^2, h^N_{\scale_L,\scale_R}\rrangle_N\\
	=&\ \frac{1}{N^2} \sum_{x, y \in \VN} G(\tfrac{x}{N})\, G(\tfrac{y}{N})\, k^N_{\scale_L,\scale_R}(x,y)\, \alpha \left(\alpha+\1_{\{x=y\}} \right)\\
	-&\ \frac{1}{N^2}\sum_{x,y \in \VN} G(\tfrac{x}{N})\, G(\tfrac{y}{N})\, h^N_{\scale_L,\scale_R}(x)\, h^N_{\scale_L,\scale_R}(y)\, \alpha^2+ \frac{1}{N^2}\sum_{x \in \VN}G(\tfrac{x}{N})^2\, h^N_{\scale_L,\scale_R}(x)\, \alpha\\
	=&\ \frac{1}{N^2} \sum_{x, y \in \VN} G(\tfrac{x}{N})\, G(\tfrac{y}{N})\left(k^N_{\scale_L,\scale_R}(x,y) - h^N_{\scale_L,\scale_R}(x)\, h^N_{\scale_L,\scale_R}(y) \right) \alpha \left(\alpha+\1_{\{x=y\}} \right)\\
	+&\ \frac{1}{N}\left\{\frac{1}{N}\sum_{x \in \VN} G(\tfrac{x}{N})^2 \left(1+h^N_{\scale_L,\scale_R}(x) \right) h^N_{\scale_L,\scale_R}(x)\, \alpha \right\}\ .
\end{align*}
By the uniform boundedness of $G \in \mathscr S$ and $\left\{h^N_{\scale_L,\scale_R}: N \in \N \right\}$, the second term on the r.h.s.\ above vanishes as $N\to \infty$. Hence, we are left only with the proof that
\begin{align}\label{eq:vanish_norm2}
	\nonumber
	&\llangle G\otimes G, k^N_{\scale_L,\scale_R}-h^N_{\scale_L,\scale_R}\otimes h^N_{\scale_L,\scale_R}\rrangle_{N\times N}\\
	=&\ \frac{1}{N^2} \sum_{x, y \in \VN} G(\tfrac{x}{N})\, G(\tfrac{y}{N})\left(k^N_{\scale_L,\scale_R}(x,y) - h^N_{\scale_L,\scale_R}(x)\, h^N_{\scale_L,\scale_R}(y) \right) \alpha \left(\alpha+\1_{\{x=y\}} \right) \underset{N\to \infty}\longrightarrow 0\ .
\end{align}
More specifically, we obtain \eqref{eq:vanish_norm2} from the following upper bound: 
for all $\tune \geq 0$ and $G \in \mathscr S$, we have
\begin{align}\label{eq:vanish_norm22}
	\nonumber
	&\sup_{N \in \N} \max\left\{N,N^{\tune-1} \right\}\left| \llangle G\otimes G, k^N_{\scale_L,\scale_R}-h^N_{\scale_L,\scale_R}\otimes h^N_{\scale_L,\scale_R}\rrangle_{N\times N}\right|\\
	\leq &\sup_{N \in \N} \max\left\{N,N^{\tune-1} \right\}\left\|G \otimes G \right\|_\infty \llangle 1 \otimes 1,\left| k^N_{\scale_L,\scale_R}-h^N_{\scale_L,\scale_R}\otimes h^N_{\scale_L,\scale_R}\right|\rrangle_{N\times N}< \infty\ .
\end{align}
We remark that the upper bound in \eqref{eq:vanish_norm22} differs from those in, e.g., \cite[Eq.\ (3.2)]{landim_stationary_2006} and \cite[Proposition 2.1]{goncalves_non-equilibrium_2019}  derived for the open symmetric exclusion process from the explicit expression of the two-point stationary correlation function and corresponding, in our setting, to
\begin{align*}
	&\sup_{N \in \N} \max\left\{N,N^{\tune-1} \right\}\left| \llangle G\otimes G, k^N_{\scale_L,\scale_R}-h^N_{\scale_L,\scale_R}\otimes h^N_{\scale_L,\scale_R}\rrangle_{N\times N}\right|\\
	\leq &\sup_{N \in \N} \max\left\{N,N^{\tune-1} \right\}\llangle \left|G \otimes G\right|,1\otimes 1\rrangle_{N\times N}  \sup_{(x,y)\in \VN\times \VN}\left| k^N_{\scale_L,\scale_R}(x,y)-h^N_{\scale_L,\scale_R}(x) h^N_{\scale_L,\scale_R}(y)\right|\\
	<&\ \infty\ .	
\end{align*}
In our case, although we do not know, as already mentioned above, the  explicit form of 
\begin{align*}
	k^N_{\scale_L,\scale_R}(x,y) -h^N_{\scale_L,\scale_R}(x)\, h^N_{\scale_L,\scale_R}(y)\ ,\quad x, y \in \VN\ ,
\end{align*}
by \cite[Theorem 3.4]{floreani_boundary2020} and \cite[Lemma 3.5]{floreani_boundary2020} (see also \cite[Remark 3.6(b)]{floreani_boundary2020}), we know the sign of these stationary two-point correlation functions as well as the following representation in terms of absorption probabilities of two inclusion particles. Recalling the notation in Section \ref{section:dual_process}, we have
\begin{align}\label{eq:positivity}
	k^N_{\scale_L,\scale_R}(x,y) -h^N_{\scale_L,\scale_R}(x)\, h^N_{\scale_L,\scale_R}(y) > 0
\end{align}
and
\begin{align}\label{eq:explicit_form}
	\nonumber
	&k^N_{\scale_L,\scale_R}(x,y) -h^N_{\scale_L,\scale_R}(x)\, h^N_{\scale_L,\scale_R}(y) \\
	=&\ \int_0^\infty \sum_{\substack{z \in \VN\\
			z \neq N-1}} \left\{\begin{array}{c}N^2 \left(h^N_{\scale_L,\scale_R}(z+1)-h^N_{\scale_L,\scale_R}(z) \right)^2\\[.2cm] \times\, \widehat \Pr^N_{\xi=\delta_x+\delta_y}\left(\xi^N_s(z)=1\ \text{and}\ \xi^N_s(z+1)=1 \right) \end{array}\right\} \dd s
\end{align}
for all $x, y \in \VN$. 
\begin{remark}
	The above expression for the stationary two-point correlations is related to the stationary solution to the non-homogeneous parabolic difference system in Eqs.\ (2.13)--(2.15) in \cite{goncalves_non-equilibrium_2019} (see also \cite{landim_stationary_2006}). 	However, we remark that, while the solution in \cite{goncalves_non-equilibrium_2019} is obtained by means of Duhamel's principle in terms of the Markov semigroup of two independent random walks, the identity \eqref{eq:explicit_form} is obtained by solving  a \emph{linear} system of evolution equations involving second order duality functions and the Markov semigroup of two interacting dual inclusion particles. The representation of the solution in terms of such  Markov semigroup -- symmetric with respect to $\llangle \cdot, \cdot \rrangle_{N\times N}$ for functions vanishing at the boundary --  will turn out useful later on.
\end{remark}
As a consequence of \eqref{eq:positivity}, we get
\begin{align*}
	&\ \left|\frac{1}{N^2} \sum_{x, y \in \VN} G(\tfrac{x}{N})\, G(\tfrac{y}{N})\left(k^N_{\scale_L,\scale_R}(x,y) - h^N_{\scale_L,\scale_R}(x)\, h^N_{\scale_L,\scale_R}(y) \right) \alpha \left(\alpha+\1_{\{x=y\}} \right)\right|\\
	\leq&\ \left\| G\right\|_\infty^2 \frac{1}{N^2} \sum_{x, y \in \VN} \left(k^N_{\scale_L,\scale_R}(x,y) - h^N_{\scale_L,\scale_R}(x)\, h^N_{\scale_L,\scale_R}(y) \right) \alpha \left(\alpha+\1_{\{x=y\}} \right)\ .
\end{align*}
while, by \eqref{eq:harmonic_discrete} and
\begin{align*}
	N^2\left(h^N_{\scale_L,\scale_R}(z+1)-h^N_{\scale_L,\scale_R}(z) \right)^2\leq \frac{C \left( \scale_L-\scale_R\right)^2}{\max\left\{1, N^{2\tune -2} \right\}}\ ,\quad z \in \VN \setminus \{N-1\}\ ,	
\end{align*}
for some constant $C=C(\alpha, \alpha_L,\alpha_R)>0$, we further obtain
\begin{align*}
	&\frac{1}{N^2} \sum_{x, y \in \VN} \left(k^N_{\scale_L,\scale_R}(x,y) - h^N_{\scale_L,\scale_R}(x)\, h^N_{\scale_L,\scale_R}(y) \right) \alpha \left(\alpha+\1_{\{x=y\}} \right)\\
	\leq&\ \frac{C \left( \scale_L-\scale_R\right)^2}{\max\left\{1, N^{2\tune -2} \right\}}  \frac{1}{N^2} \sum_{x, y \in \VN}\int_0^\infty \sum_{\substack{z \in \VN\\
			z \neq N-1}} \widehat \Pr^N_{\xi=\delta_x+\delta_y}\left(\xi^N_s(z)=1\ \text{and}\ \xi^N_s(z+1)=1 \right)  \alpha \left(\alpha+\1_{\{x=y\}} \right)\dd s\\
	=&\ \frac{C \left( \scale_L-\scale_R\right)^2}{\max\left\{1, N^{2\tune -2} \right\}} \frac{1}{N^2} \sum_{x, y \in \VN}\int_0^\infty   \mathsf P^N\left(\begin{array}{l}( X^{N,x}_s, Y^{N,y}_s) \in \VN \times \VN\\[.15cm] \text{and}\ \left| X^{N,x}_s- Y^{N,y}_s \right|=1\end{array}	 \right)  \alpha \left(\alpha+\1_{\{x=y\}}\right) \dd s\ ,
\end{align*} 
where in the last step we went from an unlabeled to a labeled representation of the dual system consisting of two inclusion particles evolving according to the infinitesimal generator $B^N$ given in \eqref{eq:generator_two_particles} and with $\mathsf P^N$, resp.\ $\mathsf E^N$, denoting the corresponding  law, resp.\ expectation: for all \\ $(x, y) \in \VNh \times \VNh$, 
\begin{align}\label{eq:non-hierarchical}
	\left\{\left(X^{N,x}_t,  Y^{N,y}_t \right): t \geq 0 \right\} \subset \VNh \times \VNh\ ,
\end{align}
denotes the Markov process with generator $B^N$ and initial conditions given by
\begin{align}\label{eq:non-hierarchical2}
	\left( X^{N,x}_0,  Y^{N,y}_0 \right)=(x,y)\quad \text{\normalfont a.s.}\ .
\end{align}
If we let $\left\{ S^N_t: t \geq 0 \right\}$ denote the Markov semigroup associated with the generator $B^N$, then
\begin{align*}
	&\ \frac{C \left( \scale_L-\scale_R\right)^2}{\max\left\{1, N^{2\tune -2} \right\}} \frac{1}{N^2} \sum_{x, y \in \VN}\int_0^\infty   \mathsf P^N\left(\begin{array}{l}( X^{N,x}_s, Y^{N,y}_s) \in \VN \times \VN\\[.15cm] \text{and}\ \left| X^{N,x}_s- Y^{N,y}_s \right|=1\end{array}	 \right)  \alpha \left(\alpha+\1_{\{x=y\}}\right) \dd s\\
	=&\ \frac{C \left( \scale_L-\scale_R\right)^2}{\max\left\{1, N^{2\tune -2} \right\}} \frac{1}{N^2} \sum_{x, y \in \VN}\int_0^\infty    S_s^N f_N(x,y)\, \alpha \left(\alpha+\1_{\{x=y\}} \right) \dd s
\end{align*}
where the function $f_N: \VNh \times \VNh\to \R$ is defined as follows:
\begin{align*}
	f_N(x, y)\coloneqq \begin{dcases} 1 &\text{if}\ x, y \in \VN\ \text{and}\ |x-y|=1\\
		0 &\text{otherwise}\ .
	\end{dcases}
\end{align*}
Moreover, by Tonelli's theorem and by the symmetry of $B^N$ -- and, consequently, of the corresponding semigroup --  with respect to the inner product $\llangle \cdot, \cdot \rrangle_{N\times N}$ for functions vanishing on $(\VNh \times \VNh)\setminus (\VN \times \VN)$ (cf.\ \eqref{eq:symmetryBN}), we obtain
\begin{align*}
	&\ \frac{C \left( \scale_L-\scale_R\right)^2}{\max\left\{1, N^{2\tune -2} \right\}} \frac{1}{N^2} \sum_{x, y \in \VN}\int_0^\infty    S_s^N f_N(x,y)\, \alpha \left(\alpha+\1_{\{x=y\}} \right) \dd s\\
	=&\ \frac{C \left( \scale_L-\scale_R\right)^2}{\max\left\{1, N^{2\tune -2} \right\}} \frac{1}{N^2} \sum_{x, y \in \VN}\int_0^\infty   f_N(x,y)  \left(S_s^N g_N(x,y)\right) \alpha \left(\alpha+\1_{\{x=y\}} \right) \dd s\\
	=&\ \frac{C \left( \scale_L-\scale_R\right)^2}{\max\left\{1, N^{2\tune -2} \right\}}  \int_0^\infty \llangle f_N, S^N_s g_N\rrangle_{N\times N}\, \dd s \ ,
\end{align*}
where the function $g_N: \VNh \times \VNh \to \R$ is the indicator function on $\VN \times \VN \subset \VNh\times \VNh$:
\begin{align}\label{eq:functiong_N}
	g_N(x,y)\coloneqq \1_{\{(x,y) \in \VN \times \VN\} }\ .
\end{align}
By  H\"older's inequality, we have
\begin{equation*}\llangle f_N, S^N_s g_N\rrangle_{N\times N}\leq \llangle f_N,1\rrangle_{N\times N} \sup_{x,y \in \VN} S^N_s g_N(x,y)
\end{equation*}
(all functions	 are non-negative) and
\begin{align*}
	\llangle f_N, 1\rrangle_{N\times N}= \frac{1}{N^2} \sum_{x, y \in \VN} \1_{\{|x-y|=1\}}\, \alpha\left(\alpha+\1_{\{x=y\}} \right) \leq \frac{2\, \alpha^2}{N}\ 	.
\end{align*} As a consequence, we further get
\begin{align}
	\left|\frac{C \left( \scale_L-\scale_R\right)^2}{\max\left\{1, N^{2\tune -2} \right\}} \int_0^\infty \llangle f_N, S^N_s g_N\rrangle_{N\times N}\, \dd s \right| 
	\leq&\  \frac{2\, \alpha^2\,C \left( \scale_L-\scale_R\right)^2 }{\max\left\{N, N^{2\tune -1} \right\}}  \sup_{x, y \in \VN} \int_0^\infty 	S^N_s g_N(x,y)\,  \dd s \ .	
\end{align}
The proof to show that assumption \ref{it:assumption_associated} of Theorem \ref{theorem:main_hydrodynamics} holds for the stationary measures ends if we can show that the r.h.s.\ above vanishes as $N\to \infty$. This last result is the content of the following lemma, whose proof is based on two main ingredients: first, by switching to the system of two inclusion particles to a suitable system of two \textquotedblleft hierarchical\textquotedblright\ first and second class inclusion particles, we provide an upper bound for 
\begin{align*}
	\sup_{x, y \in \VN} \int_0^\infty 	S^N_s g_N(x,y)\,  \dd s
\end{align*}
in terms of an expression involving only the absorption probabilities for a \emph{single} non-interacting particle; then, we conclude by employing the asymptotic result in Lemma \ref{lemma:balordo_rw} below on the absorption probability of the random walk with generator $A^N$ defined in \eqref{eq:generator_A}. 
\begin{lemma}\label{lemma:final_HDS}
	For all $\tune \geq 0$, 
	\begin{align}\label{eq:bound_lemma}
		\sup_{N \in  \N} \frac{1}{\max\left\{1,N^{\tune-1} \right\}}\sup_{x, y \in \VN} \int_0^\infty 	S^N_s g_N(x,y)\,  \dd s < \infty\ .
	\end{align}
	As a consequence, for all $\tune \geq 0$ and $G \in \mathscr S$, 
	\begin{align*}
		\limsup_{N \to \infty} \max\left\{N,N^\tune \right\}  \left|\llangle G\otimes G, k^N_{\scale_L,\scale_R}-h^N_{\scale_L,\scale_R}\otimes h^N_{\scale_L,\scale_R}\rrangle_{N\times N}\right| < \infty\ .
	\end{align*}
\end{lemma}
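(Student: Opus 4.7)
The strategy is to exploit a hierarchical (first/second class particle) representation of the two dual particles in \eqref{eq:non-hierarchical}--\eqref{eq:non-hierarchical2} so as to reduce \eqref{eq:bound_lemma} to an estimate on the absorption time of a \emph{single} random walk with generator $A^N$. Concretely, I would construct, on a common probability space, a Markov process $\{(\bar X^{N,x}_t,\bar Y^{N,y}_t):t\geq 0\}$ on $\VNh\times \VNh$ started at $(x,y)$ whose joint law is that of the dynamics generated by $B^N$, but whose first marginal $\{\bar X^{N,x}_t:t\geq 0\}$ is exactly the non-interacting random walk on $\VNh$ with generator $A^N$ in \eqref{eq:generator_A}. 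Such a \textquotedblleft lookdown\textquotedblright\ construction for $\SIP$ is the probabilistic tool advertised in the introduction and is available, in the spirit of \cite{floreani_boundary2020}, by rearranging the inclusion rates so that the tagged particle ignores its companion while the companion still feels the attraction.

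Once the coupling is at hand, the elementary pointwise bound $g_N(x',y')\leq \1_{\{x' \in \VN\}}$ on $\VNh\times\VNh$ combined with Tonelli's theorem yields
\begin{align*}
\int_0^\infty S^N_s g_N(x,y)\,\dd s\ \leq\ \int_0^\infty \mathsf P^N\!\left(\bar X^{N,x}_s \in \VN\right)\dd s\ =\ \mathsf E^N\!\left[\tau^{N,x}\right],
\end{align*}
where $\tau^{N,x}\coloneqq \inf\{s\geq 0:\bar X^{N,x}_s\in\{0,N\}\}$ is the absorption time of the first-class particle, which, by the lookdown property, equals in law the absorption time of the single random walk with generator $A^N$ started at $x\in \VN$. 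Lemma \ref{lemma:balordo_rw} in Appendix \ref{appendix:RW} then provides the required uniform bound $\mathsf E^N[\tau^{N,x}]\leq C\max\{1,N^{\tune-1}\}$ for all $x\in \VN$ and $N \in \N$, thereby establishing \eqref{eq:bound_lemma}.

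The second assertion follows by feeding \eqref{eq:bound_lemma} into the chain of inequalities performed in the paragraphs preceding the lemma, which culminates in
\begin{align*}
\left|\llangle G\otimes G,\, k^N_{\scale_L,\scale_R}-h^N_{\scale_L,\scale_R}\otimes h^N_{\scale_L,\scale_R}\rrangle_{N\times N}\right|\ \leq\ \frac{C'\left\|G\right\|_\infty^{2}}{\max\{N,N^{2\tune-1}\}}\,\sup_{x,y \in \VN}\int_0^\infty S^N_s g_N(x,y)\,\dd s,
\end{align*}
for some $C'>0$ independent of $N \in \N$. A direct case check on whether $\tune\leq 1$ or $\tune>1$ gives the elementary identity $\max\{N,N^{2\tune-1}\}=\max\{N,N^\tune\}\cdot\max\{1,N^{\tune-1}\}$, whence the right-hand side above is bounded by a constant multiple of $\max\{N,N^\tune\}^{-1}$, yielding the desired $\limsup$ bound.

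The principal obstacle lies in the construction of the hierarchical coupling itself: for $\SIP$, two adjacent bulk particles mutually pull each other at additional rate of order $N^2$, so a naive projection of the two-particle labeled dynamics onto its first coordinate does \emph{not} reproduce $A^N$. Rearranging the inclusion rates in such a way that the first-class particle genuinely decouples from its companion -- while preserving the joint law dictated by $B^N$ -- is the only non-trivial step; once it is in place, the remainder of the argument reduces to the one-particle absorption estimate in Lemma \ref{lemma:balordo_rw}.
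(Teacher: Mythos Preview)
Your overall strategy---reduce via a hierarchical first/second-class construction to a single-particle absorption time and then invoke Lemma~\ref{lemma:balordo_rw}---is exactly the paper's route, and the derivation of the second assertion from \eqref{eq:bound_lemma} via the identity $\max\{N,N^{2\tune-1}\}=\max\{N,N^\tune\}\cdot\max\{1,N^{\tune-1}\}$ is correct.

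There is, however, a genuine error in how you describe the coupling. You ask for a process $(\bar X^{N,x}_\cdot,\bar Y^{N,y}_\cdot)$ started at a fixed $(x,y)$ whose \emph{joint law is that of the dynamics generated by $B^N$} and whose \emph{first marginal is the $A^N$-random walk}. These two requirements are incompatible: if the labeled pair has the $B^N$-law, its first marginal is determined, and because the inclusion interaction in \eqref{eq:generator_two_particles} is symmetric (each particle attracts the other at rate $N^2$ when adjacent), that marginal is \emph{not} the free walk with generator $A^N$. No rearrangement of rates can ``preserve the joint law dictated by $B^N$'' while simultaneously decoupling the first coordinate; this is precisely what your own last paragraph flags as the obstacle, but the target you set there is unreachable.

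What actually works (and what the paper does in Proposition~\ref{proposition:lookdown}) is to introduce a \emph{different} generator $C^N$ in which the entire interaction is loaded onto the second coordinate: the second-class particle jumps onto the first at rate $2N^2$ when adjacent, while the first ignores the second. The resulting hierarchical process has first marginal exactly $A^N$, but its joint law is \emph{not} that of $B^N$. The link is only through symmetric observables after randomizing the initial label: for symmetric $f$,
\begin{align*}
e^{tB^N}f(x,y)=\tfrac12\bigl(e^{tC^N}f(x,y)+e^{tC^N}f(y,x)\bigr)\ .
\end{align*}
Since $g_N$ is symmetric and you ultimately take a supremum over $(x,y)\in\VN\times\VN$, this averaging is harmless and your bound $S^N_s g_N(x,y)\leq \mathsf P\bigl(\widetilde X^{N,x}_s\in\VN\bigr)$ (with $\widetilde X$ the first-class particle under $C^N$) goes through. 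So the correction is: aim for agreement with $B^N$ on symmetric functions after label-randomization, not for equality of joint laws.
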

We present the proof of Lemma \ref{lemma:final_HDS} in the Section \ref{section:first_second_class}, in which we also introduce the  notion of first and second class inclusion particles.

\subsubsection{First \& second class inclusion particles and Proof of Lemma \ref{lemma:final_HDS}}\label{section:first_second_class}

As for the $\SEP$ there is a well-known notion of first class and second class particles (see, e.g., \cite[Part III, p.\ 218]{liggett_stochastic_1999}), we show that an analogue notion exists for the $\SIP$ Roughly speaking, first class particles in the exclusion process evolve regardless of the positions of second class particles and, if their decision is to jump on a site occupied by a second class particle, the latter is \textquotedblleft forced\textquotedblright\ to leave its place  and occupy the place left vacant by the first class particle. In particular, the first class particle evolves as a non-interacting random walk, while the second class particle evolves as an interacting random walk.

Inspired by lookdown constructions available for population genetics models (see, e.g., \cite{donnelly_countable_1996}), a similar picture holds for the $\SIP$. Indeed, while the first class inclusion particle evolves as a non-interacting random walk, the dynamics of the second class inclusion particle is determined by the superposition of two distinct effects:  on the first place, it performs non-interacting random walk jumps and, on the second place, it \textquotedblleft looks down\textquotedblright\ to the first class particle and \textquotedblleft joins\textquotedblright\  it at rate two  if the latter sits at a nearest-neighboring site. In Proposition \ref{proposition:lookdown} we show that, up to average over the role  of first and second class particles at time $t=0$, the distribution at any later time $t > 0$ of an unlabeled hierarchical \textquotedblleft lookdown\textquotedblright\  process coincides with that of  an unlabeled non-hierarchical one. 

On one hand, we recall from \eqref{eq:non-hierarchical} that, for all $x, y \in \VNh$, $\left( X^{N,x}_\cdot, Y^{N,y}_\cdot\right)$ denotes the Markov process on $\VNh \times \VNh$ started from $(x,y)$ and with generator $B^N$ defined in \eqref{eq:generator_two_particles}. We refer to such process as the \emph{non-hierarchical} or \emph{symmetric} process and recall that  $\mathsf P^N$ and $\mathsf E^N$ denote their probability law and corresponding expectation, respectively. On the other hand, we define by $\left(\widetilde X^{N,x}_\cdot, \widetilde Y^{N,y}_\cdot \right)$ the so-called \emph{hierarchical} or \emph{lookdown} Markov process on $\VNh\times \VNh$ started from $(x,y)$ and with generator $C^N$ given, for all functions $f: \VNh \times \VNh\to \R$,	 by
\begin{align}\label{eq:generatorCN}
	\nonumber
	C^N f(x,y)\coloneqq&\ A^Nf(\cdot,y)(x) + A^Nf(x,\cdot)(y)\\
	+&\ \1_{\{x, y \in \VN\}}\1_{\{|x-y|=1 \}} 2\left(f(x,x)-f(x,y) \right)\ .	
\end{align}
We let $\widetilde {\mathsf P}^N$ and $\widetilde{\mathsf E}^N$ denote the probability law and corresponding expectation, respectively. We emphasize that the hierarchical dynamics described by the generator $C^N$ in \eqref{eq:generatorCN} dictates that the interaction part of the dynamics (the second line in the r.h.s.\ in \eqref{eq:generatorCN}) affects only the second class particle and compensates this asymmetry  by doubling the rate of the interaction.
\begin{proposition}\label{proposition:lookdown}
	For all $\tune \in [0,\infty)$,  $N \in \N$,  $(x, y) \in \VNh \times \VNh$ and  $t \geq 0$, we have, for all symmetric functions $f: \VNh \times \VNh\to \R$,
	\begin{align*}
		\mathsf E^N\left[ f\left( X^{N,x}_t,  Y^{N,y}_t \right)\right] = \widetilde {\mathsf E}^N\left[f\left(\widetilde X^{N,U}_t, \widetilde Y^{N,V}_t \right)\right]\ ,
	\end{align*}
	where  the random variables  $(U,V)$  take the values $(x,y) \in \VNh\times \VNh$ or $(y,x)\in \VNh \times \VNh$ with equal probability.
\end{proposition}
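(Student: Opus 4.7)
The plan is to establish a Markov intertwining between the non-hierarchical semigroup $S^N_t$ (generator $B^N$) and the hierarchical semigroup $T^N_t$ (generator $C^N$) via the symmetrization projector
\begin{equation*}
	\Pi f(x,y)\ \coloneqq\ \tfrac{1}{2}\bigl(f(x,y)+f(y,x)\bigr)\ ,\qquad f:\VNh\times\VNh\to\R\ .
\end{equation*}
More precisely, I would prove the infinitesimal identity
\begin{equation}\label{eq:plan_intertwining}
	B^N \Pi\, =\, \Pi C^N
\end{equation}
and then exponentiate. Iterating \eqref{eq:plan_intertwining} gives $(B^N)^k\Pi = \Pi(C^N)^k$ for all $k \geq 0$; since the two-particle state space $\VNh\times\VNh$ is finite, both semigroups are (entry-wise convergent) matrix exponentials and one obtains $S^N_t \Pi = \Pi T^N_t$ for all $t\geq 0$. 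Applied to a symmetric function $f$, for which $\Pi f = f$, this reads
\begin{equation*}
	\mathsf E^N\bigl[f(X^{N,x}_t,Y^{N,y}_t)\bigr]\ =\ S^N_tf(x,y)\ =\ \Pi T^N_t f(x,y)\ =\ \tfrac{1}{2}\bigl(T^N_t f(x,y) + T^N_t f(y,x)\bigr)\ ,
\end{equation*}
and the right-hand side is, by definition, $\widetilde{\mathsf E}^N\bigl[f(\widetilde X^{N,U}_t, \widetilde Y^{N,V}_t)\bigr]$ with $(U,V)$ uniform on $\{(x,y),(y,x)\}$, which is the claim of the proposition.

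To verify \eqref{eq:plan_intertwining} I would proceed in two elementary steps. First, $B^N$ commutes with the swap $(x,y)\mapsto(y,x)$, whence $B^N\Pi = \Pi B^N$: this is immediate from \eqref{eq:generator_two_particles}, since exchanging the coordinates merely permutes the two $A^N$-terms and the interaction term $(f(x,x)-f(x,y))+(f(y,y)-f(x,y))$ is manifestly symmetric in $x$ and $y$. Second, $\Pi B^N = \Pi C^N$ is a direct algebraic check: the $A^N$-parts of the two generators are identical, while the interaction parts, after averaging $(x,y)$ and $(y,x)$, both reduce to the common symmetric combination
\begin{equation*}
	N^2\,\1_{\{x,y\in \VN,\,|x-y|=1\}}\bigl(f(x,x)+f(y,y)-f(x,y)-f(y,x)\bigr)\ .
\end{equation*}
For $B^N$ this arises from the two symmetric interaction terms under the swap; for $C^N$ the doubled rate on the one-sided lookdown interaction $2\,(f(x,x)-f(x,y))$ produces exactly the same expression after symmetrization, compensating for the fact that in the lookdown dynamics only the second class particle carries the interaction.

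The main, and essentially only, delicate point is the bookkeeping in the second step, where one must check that the rate doubling in $C^N$ precisely matches what the non-hierarchical dynamics contributes on each side of the interaction once the two orderings of the particles are averaged. No analytic estimate is required and the argument is purely algebraic; in particular, it is independent of the parameter $\tune$ and of the precise form of the absorbing boundary at $\{0,N\}$, which enters only through the common operator $A^N$. A pictorial alternative would be to couple the two processes via common Poisson clocks --- with the lookdown clock carrying twice the rate and acting only on the lower-labeled particle --- and to verify pathwise that a uniformly random initial relabeling produces the same unlabeled law at every later time; this is merely a graphical incarnation of \eqref{eq:plan_intertwining}.
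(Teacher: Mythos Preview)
Your proposal is correct and follows essentially the same route as the paper's proof: the paper verifies directly that $B^N f(x,y)=\tfrac{1}{2}\bigl(C^N f(x,y)+C^N f(y,x)\bigr)$ for symmetric $f$, observes that $B^N$ preserves symmetry, iterates, and exponentiates---which is precisely your intertwining $B^N\Pi=\Pi C^N$ rewritten without the projector notation. Your two-step factorization $B^N\Pi=\Pi B^N=\Pi C^N$ is a clean way to organize the same algebra and makes the induction step slightly more transparent, but there is no substantive difference in content.
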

\begin{proof}
	As mentioned above, this result is a particular case of the more general lookdown construction for the multi-type  Moran  model with mutation (see, e.g., \cite{donnelly_countable_1996}). However, for the convenience of the reader, we report the short proof below.  Indeed, it suffices to show that, for all symmetric functions $f: \VNh \times \VNh\to \R$ and for all $(x,y) \in \VNh\times \VNh$, we have
	\begin{align*}
		B^N f(x,y) = \tfrac{1}{2}\left(C^N f(x,y) + C^N f(y,x) \right)\ ,	
	\end{align*}
where we recall that the operator $C^N$ was defined in \eqref{eq:generatorCN}.	
	This is indeed the case:
	\begin{align*}
		B^N f(x,y) =&\ A^N f(\cdot,y)(x) + A^Nf(x,\cdot)(y) + \1_{\{x, y \in \VN\}}\, \1_{\{|x-y|=1\}}\left(f(x,x)+f(y,y)-2 f(x,y) \right)\\
		=&\ \tfrac{1}{2}\left(A^Nf(\cdot,y)(x) +A^Nf(y,\cdot)(x)+ A^Nf(x,\cdot)(y) + A^Nf(\cdot,x)(y)	\right)\\
		+&\ \1_{\{x, y \in \VN\}}\, \1_{\{|x-y|=1\}} \left(\left(f(x,x)-f(x,y)\right)+\left(f(y,y)-f(y,x) \right) \right)\\
		=&\ \tfrac{1}{2}\left(C^N f(x,y) + C^N f(y,x) \right)\ .
	\end{align*}
	Because $B^N$ maps symmetric functions into symmetric functions, by induction, a similar identity holds for all $\ell \in \N_0$ and $x, y \in \VNh$, 
	\begin{align*}
		(B^N)^\ell f(x,y) = \tfrac{1}{2}\left((C^N)^\ell f(x,y) + (C^N)^\ell f(y,x) \right)\ ,
	\end{align*}
	yielding, for all $t \geq 0$,
	\begin{align*}
		e^{t B^N} f(x,y)= \tfrac{1}{2}\left(e^{t C^N}f(x,y) + e^{t C^N}f(y,x) \right)\ ,\quad (x,y) \in \VNh \times \VNh\ .
	\end{align*}
\end{proof}
\begin{remark}[\textsc{$n$-class lookdown inclusion particle systems}] One may introduce an analogous hierarchical  \textquotedblleft lookdown\textquotedblright\    construction with more than two, say $n > 2$, inclusion particles, in which the $k$-th class particle ($k \leq n$) evolves  not being affected by the particles of class $\ell > k$ and joins at rate $2$ any neighboring particle in the bulk of class $\ell < k$. Along the same lines,  if the class labels are uniformly randomized at the initial time, then, at any later time, the probability law of the unlabeled  hierarchical coincide with that of the unlabeled non-hierarchical  inclusion process started from the same initial configuration.  However, for our purposes, we only need this equivalence for systems with two particles.	
\end{remark}

\begin{proof}[Proof of Lemma {~\ref{lemma:final_HDS}}]
	In view of Proposition \ref{proposition:lookdown} and because the function $g_N: \VNh \times \VNh\to \R$ defined in \eqref{eq:functiong_N} is symmetric, i.e., $g_N(x,y)=g_N(y,x)$ for all $(x,y) \in \VNh \times \VNh$, we have
	\begin{align*}
		\sup_{x, y \in \VN}& \int_0^\infty 	S^N_s g_N(x,y)\,  \dd s\\ =&\ \sup_{x, y \in \VN} \int_0^\infty 	\mathsf E^N\left[ g_N\left( X^{N,x}_s, Y^{N,y}_s \right)\right]  \dd s\\
		=&\ \sup_{x, y \in \VN} \int_0^\infty 	\tfrac{1}{2}\left( \widetilde {\mathsf E}^N\left[ g_N\left(\widetilde X^{N,x}_s,\widetilde Y^{N,y}_s \right)\right] + \widetilde {\mathsf E}^N\left[ g_N\left(\widetilde X^{N,y}_s,\widetilde Y^{N,x}_s \right)\right]  \right) \dd s\ .
	\end{align*}
	Moreover, by conditioning on the non-absorption of the first class inclusion particle, we further obtain, for all $x, y \in \VNh$, 
	\begin{align*}
		\widetilde {\mathsf E}^N\left[ g_N\left(\widetilde X^{N,x}_s,\widetilde Y^{N,y}_s \right)\right]=&\	 \widetilde {\mathsf P}^N\left(\left(\widetilde X^{N,x}_s, \widetilde Y^{N,y}_s \right) \in \VN \times \VN \right)\\
		=&\ \widetilde {\mathsf P}^N\left(\widetilde Y^{N,y}_s \in \VN\big| \widetilde X^{N,x}_s \in \VN \right) \widetilde {\mathsf P}^N\left(\widetilde X^{N,x}_s \in \VN \right)\\
		\leq&\ \widetilde {\mathsf P}^N\left(\widetilde X^{N,x}_s \in \VN \right)\ , 
	\end{align*}
	which yields
	\begin{align}\label{eq:upper_bound_two_one}
		\nonumber
		\sup_{x, y \in \VN} \int_0^\infty 	S^N_s g_N(x,y)\,  \dd s \leq&\ \sup_{x \in \VN} \int_0^\infty \widetilde {\mathsf P}^N\left(\widetilde X^{N,x}_s \in \VN \right) \dd s\\ =&\ \sup_{x \in \VN} \int_0^\infty \mathsf P^N\left(X^{N,x}_s \in \VN \right) \dd s\ ,
	\end{align}
	where we recall that the law of the first class particle $\widetilde X^{N,x}_\cdot$ coincides, by definition, with that of the random walk $X^{N,x}_\cdot$ on $\VNh$ with generator $A^N$. By  Lemma \ref{lemma:balordo_rw} below, the r.h.s.\ in \eqref{eq:upper_bound_two_one} is bounded above by
	\begin{align*}
		C \max\left\{1, N^{\tune-1}\right\}\ , 
	\end{align*}
	for some constant $C > 0$ independent of $N \in \N$, yielding, in conclusion, \eqref{eq:bound_lemma}.
\end{proof}

\appendix
\section{Construction of test function spaces}\label{appendix:function_spaces}
In this section, we construct the function spaces $\mathscr S$ whose elements serve as test functions	 for the $\mathscr S'$-valued empirical density fields. The setting resembles that in, e.g.,  \cite{franco2015equilibrium, Bernardin2020EquilibriumFF}, although we consider a different family of Hilbertian seminorms which turn $\mathscr S$ into a nuclear Fr\'{e}chet space.

We start by recalling some definitions and facts (\cite[Chapter 11]{kipnis_scaling_1999} and \cite[Chapter 1]{kallianpur_xiong_1995}). Let	
$L^2([0,1])$ be endowed with the standard scalar product $\langle \cdot,\cdot\rangle$,  $\mathcal C^\infty([0,1])$  be the 	 linear subspace of elements of $L^2([0,1])$  with a  smooth representative function on $(0,1)$, whose derivatives are uniformly continuous and, thus, may be continuously extended on $[0,1]$ and  $\mathcal C^\infty_\comp([0,1])$ the subspace of $\mathcal C^\infty([0,1])$ of  compactly supported functions on $(0,1)$. Then
\begin{equation*}
	\mathcal C^\infty_\comp([0,1]) \subset \mathcal C^\infty([0,1]) \subset L^2([0,1])\ ,
\end{equation*} 
with $\mathcal C^\infty_\comp([0,1])$ and, thus, $\mathcal C^\infty([0,1])$ being  dense subspaces of $L^2([0,1])$.
The general framework will be the following: for all $\tune \geq 0$, we consider a densely defined, closed and self-adjoint operator $\mathcal L$ with domain $D(\mathcal L)$ and such that $\langle F, \mathcal L F \rangle \geq 0$ for all $F \in D(\mathcal L)$. Such a self-adjoint operator will arise as associated with a  suitable bilinear form $\left(\mathcal E,D(\mathcal E)\right)$. Moreover, $\{\mathcal T_t: t \geq 0 \}$ will denote the 
semigroup on $L^2([0,1])$ associated with $\mathcal A\coloneqq - \mathcal L$. Then we verify the following property (see \cite[Eq.\ (1.3.17)]{kallianpur_xiong_1995}): 
\begin{equation}\label{eq:hilbert_schmidt_assumption}
	\exists\ k_\ast \in \N\quad \text{such that}\quad \left(I+\mathcal L \right)^{-\frac{k_\ast}{2}}\quad \text{is Hilbert-Schmidt}\ .
\end{equation}
By following the construction in \cite[Example 1.3.2]{kallianpur_xiong_1995}, we get that there exist  $\{\lambda_n : n \in \N_0\} \subset [0,\infty)$ with $0 \leq \lambda_0 \leq \lambda_1 \leq \ldots$  and an orthonormal basis $\{\psi_n: n \in \N_0 \}$ in                                                                                                                                                                                                                    $L^2([0,1])$  such that
\begin{align}\label{eq:eigenfunctions}
	\mathcal  L \psi_n = \lambda_n\psi_n\ ,\quad \forall\, n \in \N_0\ .
\end{align}
Moreover, we define the space
\begin{align}\label{eq:nuclear_space}\nonumber
	\mathscr S \coloneqq&\ \bigcap_{k \in \Z} \left\{F \in L^2([0,1]): \left\| \left(I+\mathcal L \right)^{\frac{k}{2}} F\right\|^2_{L^2([0,1])} < \infty\right\}\\
	=&\ \bigcap_{k \in \Z} \left\{F \in L^2([0,1]): \sum_{n \in \N_0} \left(1+\lambda_n \right)^k \left(\langle F,\psi_n\rangle\right)^2 < \infty\right\}
\end{align}
the inner products on $\mathscr S$ given, for all $k \in \Z$, by
\begin{align*}
	\langle F, G\rangle_k \coloneqq \sum_{n \in \N_0} \left(1+\lambda_n \right)^k \langle F, \psi_n\rangle\, \langle G, \psi_n\rangle\ ,	
\end{align*}
and, for all $k \in \Z	$,  $\mathcal H_k$ as the completion of $\mathscr S$ with respect to $\langle \cdot,\cdot \rangle_k$. Note that, by the assumed density of $D(\mathcal L)$, $\mathcal H_0=L^2([0,1])$. Moreover, for all $k \in \Z$, by Friedrichs extension, we have
\begin{align*}
	\mathcal H_k = D(\left(I+\mathcal L\right)^{\frac{k}{2}})\ .
\end{align*}	 As a consequence of these definitions, 
\begin{equation*}
	\langle F, F\rangle_k \geq	 \langle F, F\rangle_\ell \ , \quad \text{for all}\quad k \geq \ell\ ,
\end{equation*}
and, by \eqref{eq:hilbert_schmidt_assumption}, all the canonical embeddings
$
\mathcal H_k \hookrightarrow \mathcal H_\ell
$
with $k \geq \ell+k_\ast$ are Hilbert-Schmidt. 	This will ensure that $\mathscr S$ endowed with the locally convex topology induced by the family of increasing Hilbertian norms 
\begin{equation}\label{eq:hilbertian_norms}
	\left\{\|\cdot\|_k \coloneqq \sqrt{\langle \cdot, \cdot \rangle_k}: k \in \Z \right\}
\end{equation} is a nuclear Fr\'{e}chet space with topological dual space $\mathscr S'$ given by
\begin{align*}
	\mathscr S' = \bigcup_{k \in \Z} \mathcal H_k
	\ . \end{align*}
Moreover, the semigroup $\left\{\mathcal T_t: t \geq 0 \right\}$ on $L^2([0,1])$ determined by $\mathcal A$ is a strongly continuous contraction semigroup described by
\begin{align*}
	\mathcal T_t F = \sum_{n \in \N_0} e^{-\lambda_n t} \langle F, \psi_n \rangle \psi_n\ ,\quad F \in L^2([0,1])\ ,
\end{align*}
and is \textquotedblleft compatible with $(\mathscr S,L^2([0,1]),\mathscr S' )$\textquotedblright\ \cite[Definition 1.3.5]{kallianpur_xiong_1995} in the following sense: 
\begin{enumerate}[label={\normalfont (\Roman*)},ref={\normalfont (\Roman*)}]
	\item \label{it:p1}  	For all $t \geq 0$, $\mathcal T_t \mathscr S \subseteq \mathscr S$.
	\item \label{it:p2} The restriction $\mathcal T_t\big|_{\mathscr S}: \mathscr S \to \mathscr S$ is continuous for all $t \geq 0$.
	\item \label{it:p3} For all $F \in \mathscr S$, $t \mapsto \mathcal T_t F$ is continuous.
	\item \label{it:p4} $\mathcal A\big|_{\mathscr S}: \mathscr S \to \mathscr S$ is continuous.
\end{enumerate}

\

Given the above common framework, we list below the specific choices of self-adjoint operators $\mathcal L$ and  associated forms  $\left(\mathcal E, D(\mathcal E)\right)$ for each of the three regimes of the parameter $\tune \geq 0$. In what follows, we let   $\mathcal W^{k,p}$ with $k \in \N$, $p \geq 1$     denote the standard Sobolev spaces on $(0,1)$ (see, e.g., \cite{adams2003sobolev}).

\

\noindent \emph{Dirichlet} ($\beta < 1$). For Dirichlet boundary conditions, we consider $\mathcal L$ as the unique self-adjoint operator associated with 
\begin{equation*}
	D(\mathcal E) = \mathcal W^{1,2}_0 \coloneqq \closure{\mathcal C_\comp([0,1])}^{\mathcal W^{1,2}}
\end{equation*}
and
\begin{equation*}
	\mathcal E(F,G) \coloneqq \alpha \int_{[0,1]} 	\tfrac{\dd}{\dd u}  F(u)\, \tfrac{\dd}{\dd u}  G(u)\, \dd u\ .
\end{equation*} 
Moreover, $$D(\mathcal L)  = \left\{F \in \mathcal W^{1,2}_0: \tfrac{\dd^2}{\dd u^2}F \in L^2([0,1]) \right\}$$ (see, e.g., \cite[Example 3.1]{arendt_laplacian_nodate} and references therein).

\

\noindent \emph{Robin} ($\tune =1$). For Robin boundary conditions, we consider $\mathcal L$ as the unique self-adjoint operator associated with
\begin{equation*}
	D(\mathcal E) = \mathcal W^{1,2} \coloneqq \left\{F \in L^2([0,1]): \tfrac{\dd}{\dd u}F \in L^2([0,1]) \right\}
\end{equation*}
and
\begin{equation*}
	\mathcal E(F,G) \coloneqq \alpha\int_{[0,1]} \tfrac{\dd}{\dd u}F(u)\, \tfrac{\dd}{\dd u}G(u)\, \dd u +  \alpha_L\, F(0)\, G(0) - \alpha_R\, F(1)\, G(1)\ .
\end{equation*}
Moreover, 
\begin{align*}
	D(\mathcal L) = \left\{F \in L^2([0,1]): \tfrac{\dd^2}{\dd u^2}F \in L^2([0,1])\ ,\  \tfrac{\dd^+}{\dd u}F(0)=\tfrac{\alpha_L}{\alpha} F(0)\ ,\ \tfrac{\dd^-}{\dd u}F(1)=\tfrac{\alpha_R}{\alpha} F(1) \right\}	
\end{align*}
(see, e.g., \cite{arendt_laplacian_nodate}).

\

\noindent \emph{Neumann} ($\tune > 1$). For Neumann boundary conditions, we consider $\mathcal L$ as the unique self-adjoint operator associated with
\begin{equation*}
	D(\mathcal E) = \mathcal W^{1,2}
\end{equation*}
and
\begin{equation*}
	\mathcal E(F, G) \coloneqq \alpha \int_{[0,1]} \tfrac{\dd}{\dd u}F(u)\, \tfrac{\dd}{\dd u}G(u)\, \dd u\ .
\end{equation*}
Moreover, 
\begin{align*}
	D(\mathcal L)=\left\{F \in \mathcal W^{1,2}: \tfrac{\dd^2}{\dd u^2}F \in L^2([0,1])\ \text{and}\  \eqref{eq:neumann_condition}\ \text{below holds} \right\}\ ,
\end{align*}
where
\begin{align}\label{eq:neumann_condition}
	\int_{[0,1]} \tfrac{\dd^2}{\dd u^2}F(u)\, G(u)\, \dd u = - \int_{[0,1]} \tfrac{\dd}{\dd u}F(u)\, \tfrac{\dd}{\dd u}G(u)\, du\ ,\quad \text{for all}\ G \in \mathcal W^{1,2}\ .
\end{align}
(See, e.g., \cite[Example 3.2]{arendt_laplacian_nodate}).

\

From classical results on the eigenvalues of the Dirichlet, Robin and Neumann Laplacian operators on the interval $[0,1]$ (see, e.g., \cite{NetrusovSafarov05}), we know that, for all $\tune \geq 0$, the self-adjoint operator $\mathcal L$ has a discrete non-negative spectrum. Moreover, by the ordering of Neumann, Robin and Dirichlet eigenvalues (see, e.g., \cite{arendt2003dirichlet}) and by Weyl's law (see, e.g., \cite{NetrusovSafarov05}), if we let, for all $\tune \geq 0$,
\begin{align*}
	\left\{\lambda_n: n \in \N_0 \right\}\ \subset\ [0,\infty)
\end{align*}
denote the eigenvalues associated to the self-adjoint operator $\mathcal L$, there exists a  constant $\varLambda = \varLambda_\tune \in (0,\infty)$ for which  we have:
\begin{align}\label{remark:weyl}
	\frac{\sqrt{\lambda_n}}{n}\ \underset{n \to \infty}\longrightarrow\ \varLambda\ .
\end{align}

As a consequence of \ref{remark:weyl}, we get property \eqref{eq:hilbert_schmidt_assumption} with $k_\ast = 1$.	This property enables the construction of the nuclear Fr\'{e}chet spaces $\mathscr S$ and their topological duals $\mathscr S'$ as above.

\

Let us further characterize such spaces by proving Proposition \ref{proposition:characterization_function_spaces}. 
\begin{proof}[Proof of Proposition ~\ref{proposition:characterization_function_spaces}]
	
	Let us first prove that $\mathscr S$ consists of smooth functions with uniformly continuous derivatives of any order. By \eqref{eq:nuclear_space}, we have	  
	\begin{align*}
		\mathscr S = \bigcap_{k \in \Z} D(\left(I+\mathcal L \right)^{\frac{k}{2}}) \subseteq \mathcal C^\infty([0,1])\ .
	\end{align*}
	Indeed,  the last inclusion is a consequence of 
	\begin{align*}
		D(\left(I+\mathcal L\right)^{\frac{k}{2}})
		\subseteq \mathcal W^{k,2}\ ,\quad k \in \N_0\ ,
	\end{align*}
	and the Sobolev embedding theorems (see, e.g., \cite[Theorem 5.4.II.C']{adams2003sobolev}): for all $k \in \N_0$ and $\lambda \in (0,\frac{1}{2}]$, 
	\begin{align}\label{eq:sobolev_embedding}
		\mathcal W^{k+1,2} \subseteq \mathcal C^{k,\lambda}([0,1])\ ,
	\end{align}
	where $\mathcal C^{k,\lambda}([0,1])$ denotes the subspace of $\mathcal C^k([0,1])$ whose derivatives up to order $k$ are H\"{o}lder continuous with H\"{o}lder exponent $\lambda$ (see, e.g., \cite[\S1.27]{adams2003sobolev}). Moreover, the embedding \eqref{eq:sobolev_embedding} into the Banach space $(\mathcal C^{k,\lambda}([0,1]), \|\cdot\|_{k,\lambda})$  is continuous.
	
	Next,	 let us show which boundary conditions  the test functions satisfy.
	We observe that, for all $\tune \geq 0$, if $G \in \mathscr S$, then
	\begin{equation}\label{eq:G_expansion}
		G = \sum_{n\in\N_0} \langle G, \psi_n\rangle\, \psi_n\ ,
	\end{equation}
	where $\left\{\psi_n : n\in \N_0 \right\}$ denotes the orthonormal basis in $L^2([0,1])$ of eigenfunctions of $\mathcal L$. Moreover, by \eqref{eq:eigenfunctions}, 
	\begin{equation*}
		\left\{\psi_n: n \in \N_0 \right\} \subseteq \bigcap_{k \in \N_0} D(\mathcal L^k) \subseteq \mathscr S\ .
	\end{equation*}
	In particular, because of the definitions of $\mathcal L$ and their domains $D(\mathcal L)$, the eigenfunctions $\left\{\psi_n: n \in \N_0 \right\}$ satisfy the corresponding boundary conditions \eqref{eq:boundary_conditions_dirichlet}--\eqref{eq:boundary_conditions_neumann}. Therefore, if we show that, for all $\tune \geq 0$, $G \in \mathscr S$ and $\ell \in \N_0$, 
	\begin{align*}
		\sum_{n \in \N_0} \left|\langle G,\psi_n\rangle\right| \sup_{u \in [0,1]}\left|\left(\frac{\dd}{\dd u} \right)^\ell \psi_n(u) \right| < \infty\ ,
	\end{align*}
	then, by \eqref{eq:G_expansion}, we get
	\begin{align}\label{eq:series}
		\left(\frac{\dd}{\dd u} \right)^\ell G = \sum_{n \in \N_0} \langle G,\psi_n\rangle\, \left(\frac{\dd}{\dd u} \right)^\ell\psi_n\ ,
	\end{align}
	and the conclusion follows. To this purpose, let us prove that
	\begin{align}\label{eq:inequality_sup_derivative}
		\sup_{u \in [0,1]}\left(\left(\frac{\dd}{\dd u}\right)^\ell\psi_n(u) \right)^2 \leq \sum_{h=0}^\ell \left(\alpha_L + \alpha_R \right)^{2 h} \lambda_n^{\ell+1-h}	
	\end{align}
	holds true for all $\tune \geq 0$, $n \in \N_0$ and $\ell \in \N_0$.
	Indeed,  Cauchy-Schwarz inequality and the boundary conditions satisfied by the eigenfunction $\psi_n$ yield
	\begin{align*}
		\sup_{u \in [0,1]}&\left(\left(\frac{\dd}{\dd u}\right)^\ell\psi_n(u) \right)^2\\
		 &\leq \int_{[0,1]} \left(\left(\frac{\dd}{\dd u}\right)^{\ell+1}\psi_n(u)\right)^2 du\\	
		&=\ \lambda_n^{\ell+1}
		+ \1_{\{\tune =1\}}\left\{-\alpha_L \left(\left(\frac{\dd}{\dd u} \right)^\ell\bigg|_{u=0} \psi_n\right)^2 +\alpha_R \left(\left(\frac{\dd}{\dd u}\right)^\ell\bigg|_{u=1} \psi_n \right)^2 \right\}\\
		&\leq\ \lambda_n^{\ell+1} + \1_{\{\tune =1\}}(\alpha_L+\alpha_R)^2 \left\{\sup_{u \in [0,1]} \left(\left(\frac{\dd}{\dd u} \right)^{\ell-1}  \psi_n(u)\right)^2\right\}\ ,
	\end{align*}
	and, by iterating, we get \eqref{eq:inequality_sup_derivative}.
	As a consequence of \eqref{eq:inequality_sup_derivative} and Cauchy-Schwarz inequality, we get, for all $\{a_n: n \in \N \} \subset (0,\infty)$, 
	\begin{align*} 
		&\sum_{n \in \N_0}  \left|\langle G,\psi_n\rangle\right| \sup_{u \in [0,1]}\left| \left(\frac{\dd}{\dd u} \right)^\ell\psi_n(u)\right|\\
		&\leq\ \sqrt{\sum_{n \in \N_0} \left(\langle G, \psi_n\rangle \right)^2 a_n^2 \sup_{u \in [0,1]} \left(\left(\frac{\dd}{\dd u}\right)^\ell\psi_n(u) \right)^2} \sqrt{\sum_{n \in \N_0} a_n^{-2}}\\
		&\leq\  \sqrt{\sum_{h=0}^\ell \left(\alpha_L+\alpha_R \right)^{2h} \sum_{n \in \N_0} \left(\langle G, \psi_n\rangle\right)^2  a_n^2 \lambda_n^{\ell+1-h}} \sqrt{\sum_{n \in \N_0} a_n^{-2}}\ .
	\end{align*}
	By choosing $\left\{a_n : n \in \N_0 \right\} = \left\{\lambda_n: n \in \N_0 \right\}$, by definition of $G \in \mathscr S$ (cf.\ \eqref{eq:nuclear_space}) and  \ref{remark:weyl}, we obtain the uniform convergence of the series on the r.h.s.\ of \eqref{eq:series}. This concludes the proof.
\end{proof}

\subsection{A remark on the	topologies of $\mathcal C([0,\infty),\mathscr S')$ and $\mathcal D([0,\infty),\mathscr S')$}\label{section:remark_topologies}

We remark that we have defined, for all $\tune \geq 0$, $\mathscr S'$ as the dual of $\mathscr S$ endowed with the strong topology. However,  when considering the spaces $\mathcal C([0,\infty),\mathscr S')$ and $\mathcal D([0,\infty),\mathscr S')$, by  \cite[Lemma 3.2]{perez-abreu_tudor_1992}, the strong topology may be  replaced by the weak$^\ast$ topology (see below) when considering Borel probability measures on such spaces because  the Borel $\sigma$-fields induced by weak$^\ast$ and strong topologies coincide. 
More precisely, let us recall  from \cite{kallianpur_xiong_1995} that:
\begin{enumerate}[label={\normalfont(\arabic*)}]
	\item For all $G \in \mathscr S$, $\|\cdot\|_G : \mathscr S'\to [0,\infty)$ given, for all $f \in \mathscr S'$, by
	\begin{equation*}
		\|f\|_G \coloneqq \left|\langle G, f\rangle \right|\ ,
	\end{equation*}
	is a seminorm. The family $\{\|\cdot\|_G : G \in \mathscr S \}$ determines the weak$^\ast$ topology on $\mathscr S'$ (see \cite[Definition 1.1.3]{kallianpur_xiong_1995}) and, in particular, $\mathscr U \subset \mathscr S'$ is a weak$^\ast$  neighborhood  of $f \in \mathscr S'$ if there exist $n \in \N$, $\{G_1,\ldots, G_n  \} \subset \mathscr S$ and $\{\varepsilon_1,\ldots, \varepsilon_n \} \subset (0,\infty)$ such that
	\begin{align*}
		\mathscr U = \left\{g \in \mathscr S' : \| f-g\|_{G_k} < \varepsilon_k\ \text{for all}\ k = 1,\ldots, n \right\}\ .	
	\end{align*}
	\item For all $G \in \mathscr S$ and $T > 0$, 
	\begin{align*}
		\left\|f \right\|_{G,T} \coloneqq \sup_{t \in [0,T]}\| f(t)\|_G
	\end{align*} 
	with  $f=\{f(t) : t \geq 0 \} \in \mathcal C([0,\infty),\mathscr S')$ 
	defines a seminorm on $\mathcal C([0,\infty),\mathscr S')$. The family $\{\|\cdot\|_{G,T} : G \in \mathscr S, T > 0 \}$ defines the weak$^\ast$ topology of $\mathcal C([0,\infty),\mathscr S')$ (see \cite[p.\ 73]{kallianpur_xiong_1995}), with neighborhoods $\mathscr U \subset \mathscr S'$ of $f=\{f(t): t \geq 0 \} \in \mathcal C([0,\infty),\mathscr S')$  given by finite intersections of sets of the following type:
	\begin{align*}
		\left\{g \in \mathcal C([0,\infty),\mathscr S') : \| f-g\|_{G,T} < \varepsilon \right\}\ .
	\end{align*}
	\item The weak$^\ast$ topology of $\mathcal D([0,\infty),\mathscr S')$ is defined in terms of the following pseudometrics \cite[p.\ 71]{kallianpur_xiong_1995}: for all $G \in \mathscr S$, $T > 0$ and $f, g \in \mathcal D([0,\infty),\mathscr S')$, 
	\begin{align*}
		d_{G,T}(f,g) \coloneqq \inf_{\lambda \in \Lambda_T} \left\{ \sup_{t \in [0,T]}\|f(t)-g(\lambda(t)) \|_G + \gamma(\lambda) \right\}\ ,
	\end{align*}
	where $\Lambda_T$ is the set of strictly increasing continuous maps from $[0,T]$ onto itself and such that
	\begin{align*}
		\gamma(\lambda)\coloneqq \sup_{s,t \in [0,T]} \left|\log\left(\frac{\lambda(t)-\lambda(s)}{t-s}\right) \right| <\infty\ .
	\end{align*}
	Neighborhoods of $f \in \mathcal D([0,\infty),\mathscr S')$ consist of finite intersections of sets of the following type:
	\begin{align*}
		\left\{g \in \mathcal D([0,\infty),\mathscr S') : d_{G,T}(f,g) < \varepsilon \right\}\ .
	\end{align*}
\end{enumerate}

As a consequence of the above definitions and \cite[Lemma 3.2]{perez-abreu_tudor_1992},	 a sequence $\{\mathscr P^N: N \in \N \}$ of Borel probability measures in $\mathcal D([0,\infty),\mathscr S')$ converges in probability to $f \in \mathcal D([0,\infty),\mathscr S')$ if, for all $T > 0$, $G \in \mathscr S$ and $\delta > 0$, 
\begin{align}\label{eq:convergence_probability_1}
	\mathscr P^N\left(\left\{g \in \mathcal  D([0,\infty),\mathscr S') : d_{G,T}(f,g) > \delta \right\} \right) \underset{N\to \infty}\longrightarrow 0\ .
\end{align}
If, in particular, $f \in \mathcal C([0,\infty),\mathscr S')$ and if
\begin{align*}
	\mathscr P^N\left(\left\{g \in \mathcal D([0,\infty),\mathscr S') : \|f-g\|_{G,T} > \delta \right\} \right)\ \underset{N\to \infty}\longrightarrow\ 0
\end{align*}
holds for all $T> 0$, $G \in \mathscr S$ and $\delta > 0$, \eqref{eq:convergence_probability_1} follows.
This notion of convergence in probability to a Dirac measure turns out to be equivalent to  weak convergence in  $\mathcal D([0,\infty),\mathscr S')$. Indeed, this follows from a version of Portmanteau's theorem in the context of completely regular Hausdorff topological spaces  and limiting $\tau$-additive measures (see, e.g., \cite[Corollary II.8.2.4]{bogachev_measure_2007}).
Combined with the above considerations,  a sequence $\{\mathscr P^N: N \in \N \}$ of Borel probability measures on $\mathcal D([0,\infty),\mathscr S')$ converges (either weakly or in probability) to the Dirac measure supported on $f \in \mathcal C([0,\infty),\mathscr S')$ if and only if
\begin{align}\label{eq:convergence_probability}
	\mathscr P^N\left(\left\{g \in \mathcal D([0,\infty),\mathscr S') : \sup_{t \in [0,T]}\left|\langle G,g(t)\rangle-\langle G,f(t)\rangle \right| > \delta \right\} \right)\ \underset{N\to \infty}\longrightarrow\ 0
\end{align}
holds for all $T > 0$, $G \in \mathscr S$ and $\delta > 0$.

\section{Absorbing random walk's estimate}\label{appendix:RW}
In order to study  absorption probabilities  before a given time for the dual random walk $$\left\{X^{N,x}_t: t \geq 0 \right\}$$ on $\VNh$ with  generator $A^N$ given in \eqref{eq:generator_A},  we  employ Stone's pathwise construction of birth-and-death processes from a time-change of Brownian motion paths (see  \cite{stone1963}). To this purpose,  $\mathsf P$ and $\mathsf E$ denote the probability law and corresponding expectation of the underlying one-dimensional standard Brownian motion $\left\{B_t: t \geq 0 \right\}$  with $B_0=0$ a.s.\ and 	 $\mathsf E\left[ (B_t)^2\right]=t$. Let us briefly describe such construction and introduce some notation.

We first define a (singular with respect to Lebesgue) measure $\nu^N$ on $\R$ which yields the correct  time-change of Brownian motion paths. More precisely, $\nu^N$ has the following form
\begin{equation*}
	\nu^N \coloneqq \sum_{x \in \Z} w^N_x \delta_{z^N_x}\ , 
\end{equation*}
where $\{w^N_x: x \in \Z \}$ are called the \textquotedblleft weights\textquotedblright\ and are given by
\begin{align*}
	w^N_x &= \frac{\alpha}{2 N}\ , \quad x \in \Z\ ,
\end{align*}
while $\{z^N_x: x \in \Z \} \subset \R$, satisfying the order relation $z^N_x < z^N_{x+1}$ for all $x \in \Z$ and given, for $x \in\VNh$, by 
\begin{eqnarray}\label{eq:definition_zN}
	\nonumber
	z^N_0&=& 0 \\
	\nonumber
	z^N_1 &=& \frac{1}{ \alpha_L \alpha N^{1-\tune}}\\
	\nonumber
	&\vdots&\\
	\nonumber
	z^N_x &=&\frac{1}{ \alpha_L \alpha N^{1-\tune}}+ (x-1)\, \frac{1}{\alpha^2 N}\\
	\nonumber
	&\vdots& \\
	z^N_N &=& \frac{1}{ \alpha_L \alpha N^{1-\tune}}+(N-2)\, \frac{1}{\alpha^2 N} +  \frac{1}{ \alpha_R \alpha N^{1-\tune}}\ ,
\end{eqnarray} stands for the \textquotedblleft support\textquotedblright\ of $\nu^N$. The specific choice of the support points $\{z^N_x: x \in \Z \setminus \VNh\}$ is irrelevant for our purposes. Let us note  that, by \eqref{eq:definition_zN}, there exists a constant $C >0$ such that, for all $\tune \in \R$, 
\begin{align}\label{eq:bound_range}
	0  < \inf_{N \in \N}  \frac{z^N_N-z^N_0}{\max\left\{1,  N^{\tune-1}\right\}}\leq  \sup_{N \in \N} \frac{z^N_N-z^N_0}{\max\left\{1,  N^{\tune-1}\right\}} \leq \frac{C}{2	\alpha}\ .
\end{align}

Let $\{\ell_t^{N,x}(z): (t, z) \in [0,\infty) \times \R\}$ denote the local time of $\{B_t+z^N_x: t \geq 0\}$  (see, e.g., \cite[Theorem (Trotter)]{stone1963}). Hence, 
\begin{equation*}
	\psi^{N,x}_t = \int_\R \ell_t^{N,x}(z)\, \nu^N(\dd z)
\end{equation*}
is
the random $\nu^N$-weighted time that the Brownian motion has spent on the support of $\nu^N$ up to time $t \geq 0$. We note that $\psi^{N,x}_\cdot: [0,\infty)\to [0,\infty) $ is a non-negative non-decreasing function. 
As a consequence of \cite[\S3]{stone1963}, the process $\{Z^{N,x}_t:	 t \geq 0\}$ defined (a.s.)\ as
\begin{equation*}
	Z^{N,x}_t \coloneqq B_{\phi^{N,x}_t}+z^N_x\ ,
\end{equation*}
with $\{\phi^{N,x}_t: t \geq 0 \}$ being the generalized inverse of $\{\psi^{N,x}_t: t \geq 0 \}$, namely
\begin{equation*}
	\phi^{N,x}_t = \sup\{s \geq 0 : \psi^{N,x}_s \leq t \}\ ,
\end{equation*}
is a jump process on $\{z^N_y,\ y \in \Z \}$ with nearest-neighbor jumps, starting from $z^N_x \in \R$,  exit rates at $z^N_y$ given by
\begin{equation*}
	\frac{1}{2 w^N_y} \frac{z^N_{y+1}-z^N_{y-1}}{(z^N_{y+1}-z^N_y)(z^N_y-z^N_{y-1})}
\end{equation*}
and jump probability from $z^N_y$ to $z^N_{y-1}$ given by
\begin{equation*}
	\frac{z^N_{y+1}-z^N_y}{z^N_{y+1}-z^N_{y-1}}\ .
\end{equation*}
In particular,  the law of the process $\{Z^{N,x}_t: t \geq 0 \}$  coincides with that of the process 
\begin{align*}
	\left\{z^N_{X^{N,x}_t}: t \geq 0 \right\}
\end{align*}
if we observe both processes until the first hitting of $\{z^N_0,z^N_N\}$. Ultimately, this construction stands at the core of the proof of Lemma \ref{lemma:balordo_rw} because it allows us to write random walks' probabilities in terms of Brownian motion probabilities
\begin{equation}\label{eq:identity_absorption_probability}
	\mathsf P^N\left(X^{N,x}_t \in \VN\right) = \mathsf P\left( \psi^{N,x}_{\tau^{N,x}} >	 t \right)\ ,
\end{equation}
where $\tau^{N,x}$ denotes the first exit time from $(z^N_0, z^N_N) \subset \R$ of  $\left\{B_t+ z^N_x: t \geq 0\right\}$. 
\begin{lemma}\label{lemma:balordo_rw}
	There exists a constant $C>0$ such that, for all $\tune \in \R$ and  $N \in \N$, we have
	\begin{align*}
		\sup_{x \in \VN}\int_0^\infty \mathsf P^N\left( X^{N,x}_t\in \VN\right)\dd t \leq  C \max\left\{1, N^{\tune -1} \right\}\  .		
	\end{align*}
\end{lemma}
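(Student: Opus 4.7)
The plan is to convert the random walk absorption time integral into a Brownian motion expectation using Stone's construction, and then bound it via the Green's function of killed Brownian motion on an interval.

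First, I would apply Tonelli and the Stone identity \eqref{eq:identity_absorption_probability} to write
\begin{align*}
\int_0^\infty \mathsf P^N(X^{N,x}_t \in \VN)\,\dd t = \int_0^\infty \mathsf P(\psi^{N,x}_{\tau^{N,x}} > t)\,\dd t = \mathsf E\left[\psi^{N,x}_{\tau^{N,x}}\right],
\end{align*}
where $\tau^{N,x}$ is the first exit time of $\{B_t + z^N_x : t \geq 0\}$ from the open interval $(z^N_0, z^N_N)$. Next, by the definition of $\psi^{N,x}_t$ as a $\nu^N$-integral of local times and another use of Tonelli,
\begin{align*}
\mathsf E\left[\psi^{N,x}_{\tau^{N,x}}\right] = \int_\R \mathsf E\left[\ell^{N,x}_{\tau^{N,x}}(z)\right]\nu^N(\dd z) = \sum_{y \in \Z} w^N_y\,\mathsf E\left[\ell^{N,x}_{\tau^{N,x}}(z^N_y)\right].
\end{align*}
Only the points $z^N_y$ lying inside $(z^N_0, z^N_N)$, i.e.\ those with $y \in \VN$, contribute a positive amount, since the Brownian motion is stopped on reaching $\{z^N_0, z^N_N\}$.

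The main analytic ingredient is the standard formula identifying the expected local time of a Brownian motion (starting from $z^N_x$, killed on exiting $(z^N_0,z^N_N)$) with the Green's function of $\tfrac{1}{2}\partial_u^2$ with Dirichlet boundary data on that interval, namely
\begin{align*}
\mathsf E\left[\ell^{N,x}_{\tau^{N,x}}(z)\right] = G_N(z^N_x,z) := \frac{2\,(z^N_x \wedge z - z^N_0)(z^N_N - z^N_x \vee z)}{z^N_N - z^N_0}.
\end{align*}
The elementary estimate $G_N(z^N_x,z) \leq \tfrac{1}{2}(z^N_N - z^N_0)$ follows directly from this formula.

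Substituting the weights $w^N_y = \tfrac{\alpha}{2N}$ and using $|\VN| = N-1$, one obtains
\begin{align*}
\sup_{x \in \VN}\int_0^\infty \mathsf P^N(X^{N,x}_t \in \VN)\,\dd t \leq \sum_{y \in \VN} \frac{\alpha}{2N}\cdot \frac{z^N_N - z^N_0}{2} \leq \frac{\alpha}{4}\,(z^N_N - z^N_0).
\end{align*}
Finally, inserting the upper bound $z^N_N - z^N_0 \leq \tfrac{C}{2\alpha}\max\{1, N^{\tune-1}\}$ from \eqref{eq:bound_range} yields the claim with a constant $C' = C/8$ independent of $N$ and $\tune$. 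The main point that requires care is the Green's function identification and the verification that the local times at the boundary support points $z^N_0, z^N_N$ do not contribute to $\psi^{N,x}_{\tau^{N,x}}$, but this is immediate from the stopping at the hitting time; once these are in place, the estimate is a clean computation scaling like the range of the support of $\nu^N$, which is precisely of order $\max\{1,N^{\tune-1}\}$.
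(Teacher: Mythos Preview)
Your proposal is correct and follows essentially the same route as the paper: reduce via Stone's construction to $\mathsf E[\psi^{N,x}_{\tau^{N,x}}]$, express this as a weighted sum of expected local times of killed Brownian motion over the support points in $\VN$, bound each such expectation by a constant multiple of the interval length $z^N_N-z^N_0$, and invoke \eqref{eq:bound_range}. The only difference is cosmetic: you quote the Dirichlet Green's function identity $\mathsf E[\ell^{N,x}_{\tau^{N,x}}(z)]=2(z^N_x\wedge z-z^N_0)(z^N_N-z^N_x\vee z)/(z^N_N-z^N_0)$ directly, whereas the paper recovers the same quantity by splitting on the exit point and integrating explicit Borodin--Salminen formulas; your version is slightly cleaner and gives a smaller constant.
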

\begin{proof}
	In view of the identity in \eqref{eq:identity_absorption_probability} and because $\psi^{N,x}_{\tau^{N,x}}$ is a non-negative random variable, we  have, for all $\tune \in \R$,  $N \in \N$ and $x \in \VN$,
	\begin{align}\label{eq:identity_expectation}
		\int_0^\infty \mathsf P^N\left( X^{N,x}_t \in \VN \right)\dd t &= \int_0^\infty \mathsf P\left( \psi^{N,x}_{\tau^{N,x}} >	 t \right) \dd t = \mathsf E\left[\psi^{N,x}_{\tau^{N,x}} \right]\ .
	\end{align}
	By the definitions of $\nu^N$, $\psi^{N,x}$ and $\phi^{N,x}$ above, we have
	\begin{align*}
		\psi^{N,x}_{\tau^{N,x}} = \frac{\alpha}{2N} \sum_{y=1}^{N-1} \ell^{N,x}_{\tau^{N,x}}(z^N_y)\ ,
	\end{align*}
	and, thus,
	\begin{align}\label{eq:expectation_gathered_time}
		\mathsf E\left[\psi^{N,x}_{\tau^{N,x}} \right] = \frac{\alpha}{2N} \sum_{y\in \VN} \mathsf E\left[\ell^{N,x}_{\tau^{N,x}}(z^N_y) \right]\ .	
	\end{align}
	Because the local times are non-negative random variables,  we get, for all $N \in \N$ and $y \in \VN$,
	\begin{align}\label{eq:split}\nonumber
		\mathsf E\left[\ell^{N,x}_{\tau^{N,x}}(z^N_y) \right] =&\ \int_0^\infty
		\mathsf P\left(\ell^{N,x}_{\tau^{N,x}}(z^N_y) > t	\right)\dd t\\
		\nonumber 
		=&\ \int_0^\infty \mathsf P\left(\ell^{N,x}_{\tau^{N,x}}(z^N_y) > t\ \text{and}\ B_{\tau^{N,x}}+z^N_x= z^N_0	\right)\dd t\\ 
		+&\ \int_0^\infty \mathsf P\left(\ell^{N,x}_{\tau^{N,x}}(z^N_y) > t\ \text{and}\ B_{\tau^{N,x}}+z^N_x= z^N_N	\right) \dd t\ .	
	\end{align}
	Let us provide, for all $N \in \N$, an upper bound uniform in $x$ and $y \in \VN$ for the first term on the r.h.s.\ above. To this purpose, we employ \cite[Formula 3.3.6(a), p.\ 214]{borodin_handbook_2002}: for all $x, y \in \VN$,  
	\begin{align}\label{eq:formula_borodin}
		\mathsf P	\left(\ell^{N,x}_{\tau^{N,x}}(z^N_y) > t\ \text{and}\ B_{\tau^{N,x}}+z^N_x= z^N_0	\right)\ =\  C^{N,x}_y \exp\left(- D^N_y t \right)\ ,
	\end{align}
	where
	\begin{align*}
		C^{N,x}_y\ \coloneqq\ \begin{dcases}
			\frac{z^N_N-z^N_x}{z^N_N-z^N_0} &\text{if}\  y < x\\
			\frac{\left(z^N_x-z^N_0 \right)\left(z^N_N-z^N_y \right)}{\left(z^N_y-z^N_0 \right)\left(z^N_N-z^N_0 \right)} &\text{if}\ y \geq x\ ,
		\end{dcases}
	\end{align*}
	and
	\begin{align*}
		D^N_y\ \coloneqq\ \frac{z^N_N-z^N_0}{2\left(z^N_N-z^N_y \right)\left( z^N_y -  z^N_0\right)}\ .
	\end{align*}
	By integrating over time the expression in \eqref{eq:formula_borodin}, we get
	\begin{align*}
		&\int_0^\infty \mathcal 	P\left(\ell^{N,x}_{\tau^{N,x}}(z^N_y) > t\ \text{and}\ B_{\tau^{N,x}}+z^N_x= z^N_0	\right) \dd t\\ 
		&=\ 2 \begin{dcases}
			\frac{z^N_N-z^N_x}{\left(z^N_N-z^N_0 \right)^2} \left(z^N_N-z^N_y \right)\left(z^N_y-z^N_0 \right) &\text{if}\ y < x\\
			\frac{z^N_x-z^N_0}{\left(z^N_N-z^N_0 \right)^2}\left(z^N_N-z^N_y \right) &\text{if}\ y \geq x\ ,
		\end{dcases}
	\end{align*}
	from which we obtain the following upper bound, uniform in $x$ and $y \in \VN$:
	\begin{align}\label{eq:bound1}
		\int_0^\infty \mathcal 	P\left(\ell^{N,x}_{\tau^{N,x}}(z^N_y) > t\ \text{and}\ B_{\tau^{N,x}}+z^N_x= z^N_0	\right) \dd t \leq 2 \left(z^N_N-z^N_0 \right)\ .
	\end{align}
	An analogous argument yields
	\begin{align}\label{eq:bound2}
		\int_0^\infty \mathcal 	P\left(\ell^{N,x}_{\tau^{N,x}}(z^N_y) > t\ \text{and}\ B_{\tau^{N,x}}+z^N_x= z^N_N	\right) \dd t \leq 2 \left(z^N_N-z^N_0 \right)\ ,
	\end{align}
	for all $x$ and $y \in \VN$. By \eqref{eq:identity_expectation}, \eqref{eq:expectation_gathered_time}, \eqref{eq:split}, \eqref{eq:bound1} and \eqref{eq:bound2}, we get:
	\begin{align*}
		\int_0^\infty \mathsf P^N\left(X^{N,x}_t \in \VN \right)\dd 	t \leq 2\alpha \left(z^N_N-z^N_0 \right)\ .
	\end{align*}
	The upper bound in \eqref{eq:bound_range} concludes the proof.	
\end{proof}

\section*{Acknowledgements}
F.S.\ wishes to  thank Joe P.\ Chen for some fruitful discussions at an early stage of this work. C.F. and P.G. thank  FCT/Portugal for support through the project 
UID/MAT/04459/2013.  This project has received funding from the European Research Council (ERC) under  the European Union's Horizon 2020 research and innovative programme (grant agreement   No.\ 715734). F.S. thanks   CAMGSD, IST, Lisbon, where part of this work has been done, and the European research and innovative programme No.\ 715734 for the kind hospitality. F.S.\ was founded by the European Union's Horizon 2020 research and innovation programme under the Marie-Sk\l{}odowska-Curie grant agreement  No.\ 754411.


\begin{thebibliography}{10}
	
	\bibitem{adams2003sobolev}
	R.~A. Adams and J.~J.~F. Fournier.
	\newblock {\em Sobolev spaces}, volume 140 of {\em Pure and Applied Mathematics
		(Amsterdam)}.
	\newblock Elsevier/Academic Press, Amsterdam, second edition, 2003.
	
	\bibitem{arendt2003dirichlet}
	W.~Arendt and M.~Warma.
	\newblock Dirichlet and {N}eumann boundary conditions: {W}hat is in between?
	\newblock volume~3, pages 119--135. 2003.
	\newblock Dedicated to Philippe B\'{e}nilan.
	
	\bibitem{arendt_laplacian_nodate}
	W.~Arendt and M.~Warma.
	\newblock The {L}aplacian with {R}obin boundary conditions on arbitrary
	domains.
	\newblock {\em Potential Anal.}, 19(4):341--363, 2003.
	
	\bibitem{baldasso_exclusion_2017}
	R.~Baldasso, O.~Menezes, A.~Neumann, and R.~R. Souza.
	\newblock Exclusion process with slow boundary.
	\newblock {\em J. Stat. Phys.}, 167(5):1112--1142, 2017.
	
	\bibitem{Bernardin2020EquilibriumFF}
	C.~Bernardin, P.~Gon\c{c}alves, M.~Jara, and S.~Scotta.
	\newblock Equilibrium fluctuations for diffusive symmetric exclusion with long
	jumps and infinitely extended reservoirs.
	\newblock {\em arXiv:2002.12841}, 2020.
	
	\bibitem{bernardin2020microscopic}
	C.~Bernardin, P.~Gon\c{c}alves, and B.~Jim\'{e}nez-Oviedo.
	\newblock A {M}icroscopic {M}odel for a {O}ne {P}arameter {C}lass of
	{F}ractional {L}aplacians with {D}irichlet {B}oundary {C}onditions.
	\newblock {\em Arch. Ration. Mech. Anal.}, 239(1):1--48, 2021.
	
	\bibitem{bianchi_metastability_2017}
	A.~Bianchi, S.~Dommers, and C.~Giardin\`a.
	\newblock Metastability in the reversible inclusion process.
	\newblock {\em Electron. J. Probab.}, 22:Paper No. 70, 34, 2017.
	
	\bibitem{billingsley_convergence_1999}
	P.~Billingsley.
	\newblock {\em Convergence of probability measures}.
	\newblock Wiley Series in Probability and Statistics: Probability and
	Statistics. John Wiley \& Sons, Inc., New York, second edition, 1999.
	\newblock A Wiley-Interscience Publication.
	
	\bibitem{bogachev_measure_2007}
	V.~I. Bogachev.
	\newblock {\em Measure theory. {V}ol. {I}, {II}}.
	\newblock Springer-Verlag, Berlin, 2007.
	
	\bibitem{borodin_handbook_2002}
	A.~N. Borodin and P.~Salminen.
	\newblock {\em Handbook of {B}rownian motion---facts and formulae}.
	\newblock Probability and its Applications. Birkh\"{a}user Verlag, Basel,
	second edition, 2002.
	
	\bibitem{carinci2019orthogonal}
	G.~Carinci, C.~Franceschini, C.~Giardin\`a, W.~Groenevelt, and F.~Redig.
	\newblock Orthogonal dualities of {M}arkov processes and unitary symmetries.
	\newblock {\em SIGMA Symmetry Integrability Geom. Methods Appl.}, 15:Paper No.
	053, 27, 2019.
	
	\bibitem{carinci_duality_2013-1}
	G.~Carinci, C.~Giardin\`a, C.~Giberti, and F.~Redig.
	\newblock Duality for stochastic models of transport.
	\newblock {\em J. Stat. Phys.}, 152(4):657--697, 2013.
	
	\bibitem{de_masi_mathematical_1991}
	A.~De~Masi and E.~Presutti.
	\newblock {\em Mathematical {M}ethods for {H}ydrodynamic {L}imits}.
	\newblock Number 1501 in Lecture {N}otes in {M}athematics. Springer-Verlag,
	1991.
	
	\bibitem{derrida_exact_1993-1}
	B.~Derrida, M.~R. Evans, V.~Hakim, and V.~Pasquier.
	\newblock Exact solution of a {$1$}{D} asymmetric exclusion model using a
	matrix formulation.
	\newblock {\em J. Phys. A}, 26(7):1493--1517, 1993.
	
	\bibitem{donnelly_countable_1996}
	P.~Donnelly and T.~G. Kurtz.
	\newblock A countable representation of the {F}leming-{V}iot measure-valued
	diffusion.
	\newblock {\em Ann. Probab.}, 24(2):698--742, 1996.
	
	\bibitem{erignoux_hydrodynamic_2018}
	C.~Erignoux.
	\newblock Hydrodynamic limit of boundary driven exclusion processes with
	nonreversible boundary dynamics.
	\newblock {\em J. Stat. Phys.}, 172(5):1327--1357, 2018.
	
	\bibitem{erignoux2019hydrodynamicsI}
	C.~Erignoux, P.~Gon\c{c}alves, and G.~Nahum.
	\newblock Hydrodynamics for {SSEP} with non-reversible slow boundary dynamics:
	{P}art {I}, the critical regime and beyond.
	\newblock {\em J. Stat. Phys.}, 181(4):1433--1469, 2020.
	
	\bibitem{erignoux2019hydrodynamicsII}
	C.~Erignoux, P.~Gon\c{c}alves, and G.~Nahum.
	\newblock Hydrodynamics for {SSEP} with non-reversible slow boundary dynamics:
	{P}art {II}, below the critical regime.
	\newblock {\em ALEA Lat. Am. J. Probab. Math. Stat.}, 17(2):791--823, 2020.
	
	\bibitem{Evans}
	L.~C. Evans.
	\newblock {\em Partial differential equations}, volume~19 of {\em Graduate
		Studies in Mathematics}.
	\newblock American Mathematical Society, Providence, RI, second edition, 2010.
	
	\bibitem{eyink_hydrodynamics_1990}
	G.~Eyink, J.~L. Lebowitz, and H.~Spohn.
	\newblock Hydrodynamics of stationary nonequilibrium states for some stochastic
	lattice gas models.
	\newblock {\em Comm. Math. Phys.}, 132(1):253--283, 1990.
	
	\bibitem{eyink_lattice_1991}
	G.~Eyink, J.~L. Lebowitz, and H.~Spohn.
	\newblock Lattice gas models in contact with stochastic reservoirs: local
	equilibrium and relaxation to the steady state.
	\newblock {\em Comm. Math. Phys.}, 140(1):119--131, 1991.
	
	\bibitem{farfan2011hydrostatics}
	J.~Farfan, C.~Landim, and M.~Mourragui.
	\newblock Hydrostatics and dynamical large deviations of boundary driven
	gradient symmetric exclusion processes.
	\newblock {\em Stochastic Process. Appl.}, 121(4):725--758, 2011.
	
	\bibitem{floreani_boundary2020}
	S.~Floreani, F.~Redig, and F.~Sau.
	\newblock Orthogonal polynomial duality of boundary driven particle systems and
	non-equilibrium correlations.
	\newblock {\em arXiv:2007.08272}, 2020.
	
	\bibitem{franco_phase_2013}
	T.~Franco, P.~Gon\c{c}alves, and A.~Neumann.
	\newblock Phase transition in equilibrium fluctuations of symmetric slowed
	exclusion.
	\newblock {\em Stochastic Process. Appl.}, 123(12):4156--4185, 2013.
	
	\bibitem{franco2015equilibrium}
	T.~Franco, P.~Gon\c{c}alves, and A.~Neumann.
	\newblock Equilibrium fluctuations for the slow boundary exclusion process.
	\newblock In {\em From particle systems to partial differential equations},
	volume 209 of {\em Springer Proc. Math. Stat.}, pages 177--197. Springer,
	Cham, 2017.
	
	\bibitem{frometa2020boundary}
	S.~Fr{\'o}meta, R.~Misturini, and A.~Neumann.
	\newblock The boundary driven zero-range process.
	\newblock {\em arXiv:2006.13479}, 2020.
	
	\bibitem{giardina_duality_2007}
	C.~Giardin\`a, J.~Kurchan, and F.~Redig.
	\newblock Duality and exact correlations for a model of heat conduction.
	\newblock {\em J. Math. Phys.}, 48(3):033301, 15, 2007.
	
	\bibitem{giardina_duality_2009}
	C.~Giardin\`a, J.~Kurchan, F.~Redig, and K.~Vafayi.
	\newblock Duality and hidden symmetries in interacting particle systems.
	\newblock {\em J. Stat. Phys.}, 135(1):25--55, 2009.
	
	\bibitem{goncalves_hydrodynamics_2019}
	P.~Gon\c{c}alves.
	\newblock Hydrodynamics for {Symmetric} {Exclusion} in {Contact} with
	{Reservoirs}.
	\newblock In G.~Giacomin, S.~Olla, E.~Saada, H.~Spohn, and G.~Stoltz, editors,
	{\em Stochastic {Dynamics} {Out} of {Equilibrium}}, pages 137--205, Cham,
	2019. Springer International Publishing.
	
	\bibitem{goncalves_non-equilibrium_2019}
	P.~Gon\c{c}alves, M.~Jara, O.~Menezes, and A.~Neumann.
	\newblock Non-equilibrium and stationary fluctuations for the {SSEP} with slow
	boundary.
	\newblock {\em Stochastic Process. Appl.}, 130(7):4326--4357, 2020.
	
	\bibitem{holley_generalized_1978}
	R.~A. Holley and D.~W. Stroock.
	\newblock Generalized {O}rnstein-{U}hlenbeck processes and infinite particle
	branching {B}rownian motions.
	\newblock {\em Publ. Res. Inst. Math. Sci.}, 14(3):741--788, 1978.
	
	\bibitem{jara_quenched_2008}
	M.~Jara and C.~Landim.
	\newblock Quenched non-equilibrium central limit theorem for a tagged particle
	in the exclusion process with bond disorder.
	\newblock {\em Ann. Inst. Henri Poincar\'{e} Probab. Stat.}, 44(2):341--361,
	2008.
	
	\bibitem{jatuviriyapornchai_structure_2019}
	W.~Jatuviriyapornchai, P.~Chleboun, and S.~Grosskinsky.
	\newblock Structure of the condensed phase in the inclusion process.
	\newblock {\em J. Stat. Phys.}, 178(3):682--710, 2020.
	
	\bibitem{kallianpur_xiong_1995}
	G.~Kallianpur and J.~Xiong.
	\newblock {\em Stochastic differential equations in infinite-dimensional
		spaces}, volume~26 of {\em Institute of Mathematical Statistics Lecture
		Notes---Monograph Series}.
	\newblock Institute of Mathematical Statistics, Hayward, CA, 1995.
	\newblock Expanded version of the lectures delivered as part of the 1993
	Barrett Lectures at the University of Tennessee, Knoxville, TN, March 25--27,
	1993, With a foreword by Balram S. Rajput and Jan Rosinski.
	
	\bibitem{kipnis_scaling_1999}
	C.~Kipnis and C.~Landim.
	\newblock {\em Scaling limits of interacting particle systems}, volume 320 of
	{\em Grundlehren der Mathematischen Wissenschaften}.
	\newblock Springer-Verlag, Berlin, 1999.
	
	\bibitem{kipnis_heat_1982}
	C.~Kipnis, C.~Marchioro, and E.~Presutti.
	\newblock Heat flow in an exactly solvable model.
	\newblock {\em J. Statist. Phys.}, 27(1):65--74, 1982.
	
	\bibitem{landim_stationary_2006}
	C.~Landim, A.~Milan\'{e}s, and S.~Olla.
	\newblock Stationary and nonequilibrium fluctuations in boundary driven
	exclusion processes.
	\newblock {\em Markov Process. Related Fields}, 14(2):165--184, 2008.
	
	\bibitem{landim_tsunoda2018}
	C.~Landim and K.~Tsunoda.
	\newblock Hydrostatics and dynamical large deviations for a reaction-diffusion
	model.
	\newblock {\em Ann. Inst. Henri Poincar\'{e} Probab. Stat.}, 54(1):51--74,
	2018.
	
	\bibitem{liggett_stochastic_1999}
	T.~M. Liggett.
	\newblock {\em Stochastic {Interacting} {Systems}: {Contact}, {Voter} and
		{Exclusion} {Processes}}, volume 324 of {\em Grundlehren der mathematischen
		{Wissenschaften}}.
	\newblock Springer Berlin Heidelberg, Berlin, Heidelberg, 1999.
	
	\bibitem{liggett_interacting_2005-1}
	T.~M. Liggett.
	\newblock {\em Interacting particle systems}.
	\newblock Classics in Mathematics. Springer-Verlag, Berlin, 2005.
	\newblock Reprint of the 1985 original.
	
	\bibitem{mitoma_tightness_1983}
	I.~Mitoma.
	\newblock Tightness of probabilities on {${\mathcal C}([0,1];{\mathscr S'})$}
	and {${\mathcal D}([0,1];{\mathscr S'})$}.
	\newblock {\em Ann. Probab.}, 11(4):989--999, 1983.
	
	\bibitem{NetrusovSafarov05}
	Y.~Netrusov and Y.~Safarov.
	\newblock Weyl asymptotic formula for the {L}aplacian on domains with rough
	boundaries.
	\newblock {\em Comm. Math. Phys.}, 253(2):481--509, 2005.
	
	\bibitem{oksendal_stochastic_1998}
	B.~{\O}ksendal.
	\newblock {\em Stochastic differential equations: An introduction with
		applications}.
	\newblock Universitext. Springer-Verlag, Berlin, sixth edition, 2003.
	
	\bibitem{perez-abreu_tudor_1992}
	V.~P\'{e}rez-Abreu and C.~Tudor.
	\newblock Regularity and convergence of stochastic convolutions in duals of
	nuclear {F}r\'{e}chet spaces.
	\newblock {\em J. Multivariate Anal.}, 43(2):185--199, 1992.
	
	\bibitem{redig_factorized_2018}
	F.~Redig and F.~Sau.
	\newblock Factorized duality, stationary product measures and generating
	functions.
	\newblock {\em J. Stat. Phys.}, 172(4):980--1008, 2018.
	
	\bibitem{stone1963}
	C.~Stone.
	\newblock Limit theorems for random walks, birth and death processes, and
	diffusion processes.
	\newblock {\em Illinois J. Math.}, 7:638--660, 1963.
	
	\bibitem{tsunoda2019hydrostatic}
	K.~Tsunoda.
	\newblock Hydrostatic limit for exclusion process with slow boundary revisited.
	\newblock In {\em {S}tochastic {A}nalysis on {L}arge {S}cale {I}nteracting
		{S}ystems}, RIMS K\^{o}ky\^{u}roku Bessatsu, B79, pages 149--162. Res. Inst.
	Math. Sci. (RIMS), Kyoto, 2020.
	
	\bibitem{vafayi_weakly_2014}
	K.~Vafayi and M.~H. Duong.
	\newblock Weakly nonequilibrium properties of a symmetric inclusion process
	with open boundaries.
	\newblock {\em Phys. Rev. E}, 90:052143, 2014.
	
\end{thebibliography}
\end{document}